\newtheorem{theorem}{Theorem}[section]
\newtheorem{lemma}[theorem]{Lemma}
\newtheorem{proposition}[theorem]{Proposition}
\newtheorem{corollary}[theorem]{Corollary}
\newenvironment{proof}{\trivlist
  \item[\hskip\labelsep{\itshape Proof.}]\upshape}{\nobreak\noindent
  $\square$\endtrivlist}
\newenvironment{other}[1]{\refstepcounter{theorem}\trivlist
  \item[\hskip\labelsep{\itshape #1~\thesection.\arabic{theorem}.}]
  \upshape}{\endtrivlist}
\DeclareMathOperator\Hom{Hom}
\DeclareMathOperator\Ext{Ext}
\DeclareMathOperator\im{im}
\DeclareMathOperator\wt{wt}
\newcommand\GL{{\mathbf{GL}}}
\newcommand\lex{\mathrm{lex}}
\newcommand\Pol{\mathrm{Pol}}
\newcommand\pol{\mathrm{pol}}
\newcommand\str{\mathrm{str}}
\newcommand\bsm{\begin{smallmatrix}}
\newcommand\esm{\end{smallmatrix}}
\begin{document}
\title{The canonical basis and the quantum Frobenius morphism}
\author{Pierre Baumann}
\date{}
\maketitle

\begin{abstract}
\noindent
The first goal of this paper is to study the amount of compatibility
between two important constructions in the theory of quantized
enveloping algebras, namely the canonical basis and the quantum
Frobenius morphism. The second goal is to study orders with which the
Kashiwara crystal $B(\infty)$ of a symmetrizable Kac-Moody algebra
can be endowed; these orders are defined so that the transition
matrices between bases naturally indexed by $B(\infty)$ are lower
triangular.
\end{abstract}

{\small Mathematics Subject Classification: 17B10 (Primary)\thinspace;
05E10 (Secondary).\\
Keywords: crystal basis, canonical basis, quantum Frobenius morphism.}

\section{Introduction}
Let $\mathfrak g=\mathfrak n_-\oplus\mathfrak h\oplus\mathfrak n_+$
be the triangular decomposition of a symmetrizable Kac-Moody algebra.
Twenty years ago, with the help of the quantized enveloping algebra
$U_q(\mathfrak g)$, Lusztig and Kashiwara constructed a basis
$\mathbf B$ in the enveloping algebra $U(\mathfrak n_+)$, called the
canonical basis, whose properties make it particularly well suited
to the study of integrable highest weight $\mathfrak g$-modules
\cite{Lusztig90,Lusztig93,Kashiwara91}. Subsequently, Kashiwara
studied the combinatorics of $\mathbf B$ with his abstract notion
of crystal~\cite{Kashiwara95}, while Lusztig, during his investigation
of the geometric problems raised by his construction of $\mathbf B$,
was eventually led to the definition of a second basis, called the
semicanonical basis~\cite{Lusztig00}.

The quantized enveloping algebra setting leads to other useful tools,
as the quantum Frobenius morphism $Fr$ and its splitting $Fr'$. Using
these maps, Kumar and Littelmann algebraized the proofs that use Mehta
and Ramanathan's Frobenius splitting for algebraic groups in positive
characteristic~\cite{KumarLittelmann02}. Littlemann also used the
quantum Frobenius splitting to define a basis in all simple
$\mathfrak g$-modules from the combinatorics of LS-paths, completing
thereby the program of standard monomial theory~\cite{Littelmann98}.

It is thus desirable to study the extent of compatibility between
these two constructions, the canonical basis and the quantum
Frobenius map. The best behavior would be that $Fr$ and $Fr'$
map a basis vector to a basis vector or to zero, in a way that
admits a combinatorial characterization.

Lusztig observed that $Fr$ commutes with the comultiplication.
It is then tempting to use duality to understand the situation. More
precisely, the graded dual of $U(\mathfrak n_+)$ can be identified
with the algebra $\mathbb Q[N]$ of regular functions on $N$, the
unipotent group with Lie algebra $\mathfrak n_+$, and the dual of the
canonical basis behaves rather nicely with respect to the multiplication
of $\mathbb Q[N]$~\cite{BerensteinZelevinsky93}. Unfortunately,
rather nicely does not mean perfect agreement, as was shown by
Leclerc~\cite{Leclerc03}. We will see in Section~\ref{ss:FrobCexA5D4}
that $Fr$ fails to be fully compatible with the canonical basis at
the same spot where Leclerc found his counterexamples. This failure
partially answers a question raised by McGerty (\cite{McGerty10},
Remark~5.10), asking whether his construction of $Fr$ in the context
of Hall algebras can be lifted to the level of perverse sheaves.

In view of the applications, the study of $Fr'$ is perhaps even more
important. An encouraging fact is that $Fr'$ is compatible with
$\mathbf B$ in small rank (type $A_1$, $A_2$, $A_3$ and $B_2$).
Alas, in general, $Fr'$ is not compatible with $\mathbf B$
(see Section~\ref{ss:FrobCexA5D4}).

One can however obtain a form of compatibility between $Fr$, $Fr'$
and $\mathbf B$ by focusing on leading terms, that is, by neglecting
terms that are smaller. This result is hardly more than an
observation, but it invites us to study the orders with which
$\mathbf B$ can be endowed. Since as a set $\mathbf B$ is just
Kashiwara's cristal $B(-\infty)$, we will in fact investigate
orders on $B(-\infty)$.

We will present two natural ways to order $B(-\infty)$. In the first
method, one checks the values of the functions $\varepsilon_i$ and
$\varphi_i$; after stabilization by the crystal operations $\tilde e_i$
and $\tilde f_i$, one obtains an order $\leq_{\str}$ (Section
\ref{ss:StrOrder}).

The second method, which works only when $\mathfrak g$ is finite
dimensional, relies on the notion of MV polytope~\cite{Anderson03,
Kamnitzer07,Kamnitzer10}: to each $b\in B(-\infty)$ is associated a
convex polytope $\Pol(b)\subseteq\mathfrak h^*$, and the containment
of these polytopes defines an order $\leq_{\pol}$ on $B(-\infty)$
(Section~\ref{ss:PolOrder}).

The plan of this paper is as follows. In Section~\ref{se:CrysOp},
we review the properties of the canonical basis that are important
from a combinatorial viewpoint, following the methods set up by
Kashiwara, Berenstein, Zelevinsky, and their coauthors. This leads
us quite naturally to the definition of the order $\leq_{\str}$.
In Section~\ref{se:IncMVPol}, we recall the definition of the MV
polytope of an element of $B(-\infty)$ and explain how to numerically
test whether two elements of $B(-\infty)$ are comparable w.r.t.\
the order $\leq_{\pol}$. In Section~\ref{se:FurEx}, we assume that
the Cartan matrix of $\mathfrak g$ is symmetric and recall
Lusztig's construction of the canonical and semicanonical bases;
we relate $\leq_{\pol}$ to the degeneracy order between quiver
representations (Proposition~\ref{pr:PolOrderDeg}) and we show that
the transition matrix between the canonical basis and the
semicanonical basis is lower unitriangular, for both $\leq_{\str}$
and $\leq_{\pol}$ (Theorem~\ref{th:CompCanSemican}). In
Section~\ref{se:StudA5D4}, we revisit Leclerc's counterexamples in
type $A_5$ and $D_4$; Theorem~\ref{th:MainA5D4} provides the
expansion on $\mathbf B$ of certain monomials in the Chevalley
generators of $U(\mathfrak n_+)$. This result is used in
Section~\ref{se:CanFrob} to show that the Frobenius morphism $Fr$
and its splitting $Fr'$ are not fully compatible with $\mathbf B$.

The author wishes to thank P.~Caldero, J.~Kamnitzer, B.~Leclerc,
P.~Littelmann and C.~Sabbah for many discussions connected to the
research reported here. He also thanks A.~Kleshchev for the reference
to Berenstein and Kazhdan's work~\cite{BerensteinKazhdan07}. Results
presented in Sections~\ref{ss:StatRes} and~\ref{ss:FrobCexA5D4} were
found with the help of a computer running GAP and its package
QuaGroup~\cite{GAP4,DeGraaf07}.

The material of Section~\ref{ss:FrobCexA5D4} was presented at the
conference \href{http://www.math.unipd.it/~carnoval/olivia.html}
{\textit{Una giornata di Algebra a Roma}} dedicated to the memory
of Olivia Rossi-Doria. Olivia died in June 2006 at the age of 35
and is missed by all her friends.

\section{Crystal operations}
\label{se:CrysOp}
For all this paper, we fix a symmetrizable generalized Cartan matrix
$A=(a_{i,j})$, with rows and columns indexed by a finite set $I$. We
choose a $\mathbb Q$-vector space $\mathfrak h$, with a basis indexed
by~$I$. We denote by $(\alpha_i)_{i\in I}$ the dual basis in
$\mathfrak h^*$ and we define elements $\alpha_i^\vee$ in $\mathfrak h$
by the equation $\langle\alpha_i^\vee,\alpha_j\rangle=a_{i,j}$.
We fix a lattice $P\subseteq\mathfrak h^*$ such that
$$\{\alpha_i\mid i\in I\}\subseteq P\subseteq\{\lambda\in\mathfrak h^*\mid
\forall i\in I,\ \langle\alpha_i^\vee,\lambda\rangle\in\mathbb Z\}.$$
We denote the $\mathbb N$-span of the simple roots $\alpha_i$ by $Q_+$.

\subsection{Crystals}
\label{ss:Crystals}
A crystal in the sense of Kashiwara \cite{Kashiwara95} is a set $B$
endowed with maps
$$\wt:B\to P,\quad\varepsilon_i,\varphi_i:B\to\mathbb Z
\quad\text{and}\quad\tilde e_i,\tilde f_i:B\to B\sqcup\{0\},$$
for each $i\in I$. The element $0$ here is a ghost element added to
$B$ so that $\tilde e_i$ and $\tilde f_i$ are everywhere defined.
One requires that $\langle\alpha_i^\vee,\wt(b)\rangle=
\varphi_i(b)-\varepsilon_i(b)$ for each $b\in B$. The operators
$\tilde e_i$ and $\tilde f_i$ are mutually converse partial bijections:
$b''=\tilde e_ib'$ if and only if $\tilde f_ib''=b'$, and when these
equalities hold,
$$\wt(b'')=\wt(b')+\alpha_i,\quad\varepsilon_i(b'')=\varepsilon_i(b')-1
\quad\text{and}\quad\varphi_i(b'')=\varphi_i(b')+1.$$

We say that a crystal $B$ is lower normal if for each $b\in B$,
the number $p=\varphi_i(b)$ is the largest integer $p\in\mathbb N$
such that $\tilde f_i^pb$ is defined. (In other words,
$\tilde f_i^pb\in B$ and $\tilde f_i^{p+1}b=0$.)

Let $B$ be a lower normal crystal. For $i\in I$ and $b\in B$, we set
$\tilde f_i^{\max}b=\tilde f_i^{\varphi_i(b)}b$. In addition, for a
finite sequence $\mathbf i=(i_1,\ldots,i_\ell)$ of elements in $I$,
we define maps $\Phi_{\mathbf i}:B\to\mathbb N^\ell$ and
$\widetilde F_{\mathbf i}:B\to B$ in the following fashion. Given
$b\in B$, we put $b_0=b$, and for $k\in\{1,\ldots,\ell\}$, we set
$n_k=\varphi_{i_k}(b_{k-1})$ and $b_k=\tilde f_{i_k}^{\max}b_{k-1}$.
With these notations,
$$\Phi_{\mathbf i}(b)=(n_1,\ldots,n_\ell)\quad\text{and}\quad
\widetilde F_{\mathbf i}(b)=b_\ell.$$
Of course, the datum of $\Phi_{\mathbf i}(b)$ and of
$\widetilde F_{\mathbf i}(b)$ fully determines $b$.
The map $\Phi_{\mathbf i}$ is usually called the string
parametrization in direction $\mathbf i$
\cite{BerensteinZelevinsky93,BerensteinZelevinsky01,Kashiwara93b}.

\subsection{Bases of canonical type}
\label{ss:BasesOCT}
Let $\mathbf f$ be the $\mathbb Q$-algebra generated by elements
$\theta_i$, for $i\in I$, submitted to the relations
\begin{equation}
\label{eq:DefAlgF}
\sum_{p+q=1-a_{i,j}}(-1)^p\frac{\theta_i^p}{p!}\theta_j
\frac{\theta_i^q}{q!}=0
\end{equation}
for all $i\neq j$ in $I$. This algebra $\mathbf f$ is naturally graded
by $Q_+$ (gradation by the weight), the generator $\theta_i$ being of
weight $\alpha_i$; we write $\mathbf f=\bigoplus_{\nu\in Q_+}\mathbf
f_\nu$. In addition, $\mathbf f$ has an antiautomorphism $\sigma$
which fixes the~$\theta_i$. As usual, we introduce the divided powers
$\theta_i^{(n)}=\theta_i^n/n!$.

Let $\mathfrak g=\mathfrak n_-\oplus\widehat{\mathfrak h}\oplus
\mathfrak n_+$ be the Kac-Moody algebra defined by the Cartan
matrix $A$. By the Gabber-Kac Theorem~\cite{GabberKac81}, there
are isomorphisms $x\mapsto x^\pm$ from $\mathbf f$ onto
$U(\mathfrak n^\pm)$, that map the generators $\theta_i$ to the
elements $e_i$ or $f_i$ of $\mathfrak g$.

A basis $\mathbf B$ of $\mathbf f$ is said to be of canonical type
if it satisfies the conditions~\ref{it:BOCTa}--\ref{it:BOCTe} below:
\begin{enumerate}
\item
\label{it:BOCTa}
The elements of $\mathbf B$ are weight vectors.
\item
\label{it:BOCTb}
$1\in\mathbf B$.
\item
\label{it:BOCTc}
Each right ideal $\theta_i^p\mathbf f$ is spanned by a subset of
$\mathbf B$.
\item
\label{it:BOCTd}
In the bases induced by $\mathbf B$, the left multiplication by
$\theta_i^{(p)}$ from $\mathbf f/\theta_i\mathbf f$ onto
$\theta_i^p\mathbf f/\theta_i^{p+1}\mathbf f$ is given by a
permutation matrix.
\item
\label{it:BOCTe}
$\mathbf B$ is stable by $\sigma$.
\end{enumerate}

Let $N$ be the unipotent group with Lie algebra $\mathfrak n_+$.
The graded dual of $\mathbf f$ can be identified with the algebra
$\mathbb Q[N]$ of regular functions on~$N$. Berenstein, Zelevinsky,
and their coauthors extensively studied bases of $\mathbb Q[N]$ that
enjoy axioms dual to the conditions \ref{it:BOCTa}--\ref{it:BOCTd}
above. More precisely, the bases considered in
\cite{GelfandZelevinsky85,RetakhZelevinsky88} (the so-called
good bases) forget about condition \ref{it:BOCTd}, the string
bases of~\cite{BerensteinZelevinsky93} satisfy stronger axioms
inspired by the positivity properties of Lusztig's canonical basis in the
quantized enveloping algebra $U_q(\mathfrak n_+)$, and the perfect
bases of~\cite{BerensteinKazhdan07} relax the normalization constraint
in condition~\ref{it:BOCTd}. Our definition above is of course
directly inspired from these works.

Adopting this dual setting allows to exploit the multiplicative
structure of the function algebra $\mathbb Q[N]$. We will however
stick with $\mathbf f$, because the quantum Frobenius splitting $Fr'$
is more easily defined on the enveloping algebra side.

\subsection{Examples}
\label{ss:ExamplesBCT}
Since $A$ is symmetrizable, the half-quantum group $U^-_q(\mathfrak g)$
has a canonical basis \cite{Lusztig91,Lusztig93,Kashiwara91}. After
specialization at $q=1$, one gets a basis of $\mathbf f$ of canonical
type, by Theorems~14.3.2 and~14.4.3 in~\cite{Lusztig93}, or by Theorem~7
in \cite{Kashiwara91}, Proposition~5.3.1 in~\cite{Kashiwara93a} and
Theorem~2.1.1 in~\cite{Kashiwara93b}.

If $A$ is symmetric, then $\mathbf f$ can be endowed with the
semicanonical basis~\cite{Lusztig00}. Again, this is a basis of
canonical type, by Theorems~3.1 and~3.8 and by the proof of
Lemma~2.5 in~\cite{Lusztig00}.

If $A$ is of finite type, then one can use the geometric Satake
correspondence \cite{Ginzburg95} and Mirkovi\'c-Vilonen cycles
\cite{MirkovicVilonen04} to define a basis of $\mathbf f$. It can be
shown that this basis is of canonical type.

One major interest of bases of canonical type is that they induce bases
in irreducible integrable highest weight representations of $\mathfrak g$.
In more details, let $M$ be an irreducible integrable highest weight
representation of $\mathfrak g$, let $m$ be a highest weight vector of
$V$, and let $\lambda$ be the weight of $m$. Then the map
$a:x\mapsto x^-m$ from $\mathbf f$ to $M$ is surjective with kernel
$\sum_{i\in I}\mathbf f\theta_i^{n_i}$, where
$n_i=1+\langle\alpha_i^\vee,\lambda\rangle$ (see \cite{Kac90},
Corollary~10.4). If $\mathbf B$ is a basis of canonical type of
$\mathbf f$, then $\ker a$ is spanned by a subset of $\mathbf B$,
hence $a(\mathbf B)\setminus\{0\}$ is a basis of $\im a=M$. Moreover,
the analysis done in \cite{Kashiwara93b}, Section~3.2 shows that
this basis is compatible with all Demazure submodules of $M$.
In particular, the semicanonical basis is compatible with
Demazure submodules, a result first obtained by Savage
(\cite{Savage06}, Theorem~7.1).

\subsection{The crystal $B(-\infty)$}
\label{ss:CrysBInfinity}
Each basis $\mathbf B$ of canonical type of $\mathbf f$ is endowed
with the structure of a lower normal crystal, as follows. The
application $\wt$ maps a vector $b\in\mathbf B$ to its weight.
Let $i\in I$ and let $b\in\mathbf B$. We define $\varphi_i(b)$ as the
largest $p\in\mathbb N$ such that $b\in\theta_i^p\mathbf f$ and we set
$\varepsilon_i(b)=\varphi_i(b)-\langle\alpha_i^\vee,\wt(b)\rangle$.
Thus, given $i\in I$ and $p\in\mathbb N$, the images
of the elements $\{b\in\mathbf B\mid\varphi_i(b)=p\}$ form a basis
of $\theta_i^p\mathbf f/\theta_i^{p+1}\mathbf f$.
The partial bijections $\tilde e_i$ and $\tilde f_i$ are set up so
that when $\varphi_i(b)=p$, the element $\tilde f_i^{\max}b$ is
the element of $\mathbf B$ such that
$\theta_i^{(p)}\,\tilde f_i^{\max}b\equiv b$ modulo
$\theta_i^{p+1}\mathbf f$.

It turns out that any two bases of canonical type have the same
underlying crystal. This fact is a particular case of a result by
Berenstein and Kazhdan~\cite{BerensteinKazhdan07}. For the convenience
of the reader, we now recall the proof.

Given a basis $\mathbf B$ of a vector space $V$, we denote by $b^*$
the element of the dual basis that corresponds to $b\in\mathbf B$; in
other words, $x=\sum_{b\in\mathbf B}\langle b^*,x\rangle\;b$ for
all vectors $x\in V$.

\begin{lemma}
\label{le:UniqBij}
Let $V$ be a finite dimensional vector space and let $\mathbf B'$ and
$\mathbf B''$ be two bases of $V$. For $a\in\{1,2\}$, let $\mathcal C_a$
be a partially ordered set and let $b'_a:\mathcal C_a\to\mathbf B'$ and
$b''_a:\mathcal C_a\to\mathbf B''$ be bijections; thus both the rows
and the columns of the transition matrix between $\mathbf B'$ and
$\mathbf B''$ are indexed by $\mathcal C_a$. If the transition matrix
between $\mathbf B'$ and $\mathbf B''$ is lower unitriangular with
respect to both indexings $(\mathcal C_1,b'_1,b''_1)$ and
$(\mathcal C_2,b'_2,b''_2)$, then
$b''_1\circ(b'_1)^{-1}=b''_2\circ(b'_2)^{-1}$.
\end{lemma}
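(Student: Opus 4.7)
The plan is to show that $\phi_a := b''_a\circ(b'_a)^{-1}:\mathbf B'\to\mathbf B''$ does not depend on $a$ by supposing $\phi_1\neq\phi_2$ and extracting a contradiction from the antisymmetry of $\leq_1$. The key reformulation is that lower unitriangularity w.r.t.\ indexing $a$ says exactly two things: the ``diagonal'' entries $M(\phi_a(b'),b')$ all equal~$1$, and any nonzero entry $M(b'',b')$ forces $(b''_a)^{-1}(b'')\leq_a(b'_a)^{-1}(b')$, where $M$ is the transition matrix from $\mathbf B'$ to $\mathbf B''$.

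Assume $\phi_1\neq\phi_2$ and let $\pi=\phi_2^{-1}\circ\phi_1$, a nontrivial permutation of $\mathbf B'$. Pick a cycle $b'_0,b'_1,\ldots,b'_{k-1}$ of length $k\geq 2$ with $\pi(b'_i)=b'_{i+1}$ (indices mod $k$), and set $b''_i=\phi_1(b'_i)$. The cycle condition then reads $\phi_2(b'_{i+1})=b''_i$, and so the diagonal condition for indexing~$2$ yields $M(b''_i,b'_{i+1})=1$.

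Now I would feed this nonzero entry into the triangularity coming from indexing~$1$. Setting $c_i=(b'_1)^{-1}(b'_i)$, the equality $\phi_1(b'_i)=b''_i$ forces $(b''_1)^{-1}(b''_i)=c_i$; the triangularity condition applied to the nonzero entry $M(b''_i,b'_{i+1})$ then gives $c_i\leq_1 c_{i+1}$. Traversing the cycle produces $c_0\leq_1 c_1\leq_1\cdots\leq_1 c_{k-1}\leq_1 c_0$, and antisymmetry of $\leq_1$ collapses all $c_i$ to a single element, contradicting that the $b'_i$ (hence the $c_i$) are pairwise distinct.

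The proof is thus essentially a one-line argument once the cycle structure is written down; the only real step is recognizing that one must read the diagonal information out of \emph{one} indexing and the triangularity information out of the \emph{other}, with the partial order $\leq_1$ closing the cycle. No induction on $\dim V$ is needed, and the finite-dimensionality is used only to guarantee that a non-identity bijection of $\mathbf B'$ contains a finite nontrivial cycle.
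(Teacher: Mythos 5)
Your proof is correct and is essentially the paper's own argument: both consider a nontrivial (finite) cycle of the permutation measuring the discrepancy between the two indexings, read the unit diagonal from one indexing to produce nonzero entries of the transition matrix, and feed those into the triangularity coming from the other indexing. The only cosmetic difference is that the paper picks a maximal element of the cycle and contradicts maximality in a single step, whereas you chain the inequalities $c_0\leq_1 c_1\leq_1\cdots\leq_1 c_0$ around the whole cycle and invoke antisymmetry; the content is the same.
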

\begin{proof}
Let $\Sigma=(b''_1)^{-1}\circ b''_2\circ(b'_2)^{-1}\circ b'_1$.
Suppose that $\Sigma$ is not the identity permutation of
$\mathcal C_1$. Let $m\in\mathcal C_1$ be a maximal element in a
nontrivial cycle of $\Sigma$ and let $n=(b'_2)^{-1}\circ b'_1(m)$.
Then $\bigl\langle(b''_1(\Sigma(m)))^*,b'_1(m)\bigr\rangle=
\bigl\langle(b''_2(n))^*,b'_2(n)\bigr\rangle=1$, and therefore
$\Sigma(m)\geq m$ in $\mathcal C_1$, by unitriangularity of the
transition matrix. This contradicts the choice of $m$. We conclude
that $\Sigma$ is the identity.
\end{proof}

\begin{lemma}
\label{le:HighWt}
Let $\mathbf B$ be a basis of canonical type and let $b\neq1$ be
an element of $\mathbf B$. Then there exists $i\in I$ such that
$\varphi_i(b)>0$.
\end{lemma}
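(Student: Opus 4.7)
The plan is to locate $b$ inside the augmentation ideal of $\mathbf f$ and then use condition~\ref{it:BOCTc} to force $b$ itself (not merely a combination involving $b$) into some $\theta_i\mathbf f$.

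First I would observe that $\mathbf f$ is generated by the $\theta_i$, which all have strictly positive weight, so the weight grading satisfies $\mathbf f_0=\mathbb Q\cdot 1$ and the augmentation ideal $\mathbf f_+=\bigoplus_{\nu\in Q_+\setminus\{0\}}\mathbf f_\nu$ coincides with $\sum_{i\in I}\theta_i\mathbf f$. By \ref{it:BOCTa} the element $b$ is a weight vector and by \ref{it:BOCTb} we have $1\in\mathbf B$; since $b\neq 1$ and $\mathbf B$ is a basis, the weight of $b$ cannot be $0$, so $b\in\mathbf f_+$.

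Next I would combine the $\theta_i\mathbf f$ via condition~\ref{it:BOCTc}. Set $\mathbf B_i=\mathbf B\cap\theta_i\mathbf f$; by \ref{it:BOCTc} each $\mathbf B_i$ is a basis of $\theta_i\mathbf f$. Taking the sum over $i\in I$, we see that the subset $\bigcup_{i\in I}\mathbf B_i\subseteq\mathbf B$ spans $\sum_i\theta_i\mathbf f=\mathbf f_+$. Since $b\in\mathbf f_+$, the element $b$ is a $\mathbb Q$-linear combination of vectors from $\bigcup_i\mathbf B_i$. But $\mathbf B$ is a basis of $\mathbf f$, so the expansion of $b$ on $\mathbf B$ is unique and reduces to $b$ itself; hence $b$ must already belong to $\bigcup_{i\in I}\mathbf B_i$. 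Pick any $i$ with $b\in\mathbf B_i\subseteq\theta_i\mathbf f$; then $\varphi_i(b)\geq 1>0$, as required.

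There is no real obstacle: the argument uses only the weight grading of $\mathbf f$ together with conditions~\ref{it:BOCTa}, \ref{it:BOCTb} and~\ref{it:BOCTc}, and the remaining properties~\ref{it:BOCTd}, \ref{it:BOCTe} are not needed.
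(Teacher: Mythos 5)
Your proof is correct and follows essentially the same route as the paper: condition~\ref{it:BOCTc} shows that $\sum_{i\in I}\theta_i\mathbf f$ is spanned by the basis vectors lying in some $\theta_i\mathbf f$, and a basis element contained in a subspace spanned by a subset of the basis must belong to that subset. The only difference is that you spell out the weight argument ($b\neq1$ forces $b$ into the augmentation ideal $\sum_i\theta_i\mathbf f$), which the paper leaves implicit.
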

\begin{proof}
For each $i\in I$, the right ideal $\theta_i\mathbf f$ is a
spanned by a subset of $\mathbf B$, namely by
$\mathbf B_{i;>0}=\mathbf B\cap\theta_i\mathbf f=\{b\in\mathbf
B\mid\varphi_i(b)>0\}$. The subspace $\sum_{i\in I}\theta_i\mathbf f$
is thus spanned by $\bigcup_{i\in I}\mathbf B_{i;>0}$. Any element of
$\mathbf B$ in this subspace therefore belongs to
$\bigcup_{i\in I}\mathbf B_{i;>0}$.
\end{proof}

Given an integer $\ell\geq0$, we endow $\mathbb N^\ell$
with the lexicographic order $\leq_{\lex}$.

\begin{proposition}
\label{pr:LexOrd}
Let $\mathbf B'$ and $\mathbf B''$ be two bases of canonical type of
$\mathbf f$. Let $\mathbf i=(i_1,\ldots,i_\ell)$ be a finite sequence
of elements of $I$.
\begin{enumerate}
\item
\label{it:LOa}
Let $(b',b'')\in\mathbf B'\times\mathbf B''$. If
$\langle(b'')^*,b'\rangle\neq0$, then
$\Phi_{\mathbf i}(b')\leq_{\lex}\Phi_{\mathbf i}(b'')$.
\item
\label{it:LOb}
In~\ref{it:LOa}, if moreover $\Phi_{\mathbf i}(b')=\Phi_{\mathbf
i}(b'')$, then $\langle(b'')^*,b'\rangle=\langle(\widetilde
F_{\mathbf i}b'')^*,\widetilde F_{\mathbf i}b'\rangle$.
\item
\label{it:LOc}
For each $b''\in\mathbf B''$, there is $b'\in\mathbf B'$
such that $\langle(b'')^*,b'\rangle\neq0$ and
$\Phi_{\mathbf i}(b')=\Phi_{\mathbf i}(b'')$.
\end{enumerate}
\end{proposition}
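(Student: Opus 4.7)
The plan is an induction on $\ell$. The base case $\ell = 0$ is immediate: the $\Phi_{\mathbf{i}}$-tuples are empty, so (i) and (ii) are tautologies, and (iii) reduces to the invertibility of the transition matrix between $\mathbf{B}'$ and $\mathbf{B}''$.

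For the inductive step write $i = i_1$ and $\mathbf{i}' = (i_2, \ldots, i_\ell)$. The engine is a key formula: whenever $\langle (b'')^*, b'\rangle \neq 0$ with $\varphi_i(b') = \varphi_i(b'') = p$, setting $c' = \tilde f_i^{\max} b'$ and $c'' = \tilde f_i^{\max} b''$, one has
$$\langle (b'')^*, b'\rangle = \langle (c'')^*, c'\rangle.$$
To establish this I would first note that condition~\ref{it:BOCTc} forces $\varphi_i(b'') \geq \varphi_i(b')$: indeed $b' \in \theta_i^{\varphi_i(b')}\mathbf{f}$, an ideal spanned by the subset of $\mathbf{B}''$ on which $\varphi_i \geq \varphi_i(b')$. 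The case $\varphi_i(b'') > \varphi_i(b')$ proves (i) directly, with (ii) vacuous. Otherwise, I would project the $\mathbf{B}''$-expansion of $b'$ into $\theta_i^p\mathbf{f}/\theta_i^{p+1}\mathbf{f}$, pull back to $\mathbf{f}/\theta_i\mathbf{f}$ along the inverse of the isomorphism $\theta_i^{(p)}$ supplied by condition~\ref{it:BOCTd}, and compare the resulting expression for the image of $c'$ with its direct $\mathbf{B}''$-expansion on the induced basis $\{\bar c : c\in\mathbf{B}'',\ \varphi_i(c) = 0\}$ of $\mathbf{f}/\theta_i\mathbf{f}$; reading off the coefficient of the image of $c''$ gives the formula.

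Granting the key formula, parts (i) and (ii) follow by applying the inductive hypothesis to $(c', c'')$ with sequence $\mathbf{i}'$, via $\Phi_{\mathbf{i}}(b) = (\varphi_i(b), \Phi_{\mathbf{i}'}(\tilde f_i^{\max} b))$ and $\widetilde F_{\mathbf{i}} = \widetilde F_{\mathbf{i}'}\circ\tilde f_i^{\max}$. For (iii), given $b''$ with $p = \varphi_i(b'')$ and $c'' = \tilde f_i^{\max} b''$, apply the inductive (iii) to $c''$ with $\mathbf{i}'$ to obtain $c'\in\mathbf{B}'$ such that $\langle (c'')^*, c'\rangle \neq 0$ and $\Phi_{\mathbf{i}'}(c') = \Phi_{\mathbf{i}'}(c'')$. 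The same $\varphi_i$-comparison applied to the pair $(c',c'')$ gives $\varphi_i(c') = 0$, and weight-homogeneity of the $\mathbf{B}''$-expansion forces $\wt(c') = \wt(c'')$; condition~\ref{it:BOCTd} applied to $\mathbf{B}'$ then furnishes a unique $b'\in\mathbf{B}'$ with $\varphi_i(b') = p$ and $\tilde f_i^{\max} b' = c'$, and the key formula yields $\langle (b'')^*, b'\rangle = \langle (c'')^*, c'\rangle \neq 0$ while $\Phi_{\mathbf{i}}(b') = \Phi_{\mathbf{i}}(b'')$ by construction.

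The main point requiring care is the derivation of the key formula: one has to carefully distinguish the $\mathbf{B}'$- and $\mathbf{B}''$-induced bases of both quotients and use that $\theta_i^{(p)}$ acts in each by a permutation of the induced basis (not merely as an isomorphism of vector spaces). Once that identity is in place, the rest is lexicographic bookkeeping and an application of the induction hypothesis.
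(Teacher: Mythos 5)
Your proposal is correct and follows essentially the same route as the paper: induction on $\ell$, with the ideal-spanning condition giving $\varphi_{i_1}(b'')\geq\varphi_{i_1}(b')$ and the permutation property of $\theta_{i_1}^{(p)}$ on the quotients giving the coefficient identity $\langle(b'')^*,b'\rangle=\langle(\tilde f_{i_1}^{\max}b'')^*,\tilde f_{i_1}^{\max}b'\rangle$, then the inductive hypothesis applied to $(i_2,\ldots,i_\ell)$. The only cosmetic difference is that you read the key computation "downward" (project into $\theta_{i_1}^{p}\mathbf f/\theta_{i_1}^{p+1}\mathbf f$ and pull back) whereas the paper multiplies the expansion of $\tilde f_{i_1}^{\max}b'$ by $\theta_{i_1}^{(n_1)}$; note also that this comparison of coefficients is valid without the nonvanishing hypothesis, which is exactly what you need (and implicitly use) in part (iii).
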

\begin{proof}
The proof proceeds by induction on the length $\ell$ of $\mathbf i$.
The result is trivial if $\ell=0$. We now assume that $\ell>0$ and
that the result holds for the sequence $\mathbf j=(i_2,\ldots,i_\ell)$.

Let $b'\in\mathbf B'$, let $n_1=\varphi_{i_1}(b')$, and let
$b'_1=\tilde f_{i_1}^{\max}b'$. Let us write
$$b'_1=\sum_{b''_1\in\mathbf B''}\langle(b''_1)^*,b'_1\rangle\;b''_1
\equiv\sum_{\substack{b''_1\in\mathbf B''\\\varphi_{i_1}(b''_1)=0}}\langle
(b''_1)^*,b'_1\rangle\;b''_1\pmod{\theta_{i_1}\mathbf f}.$$
Multiplying on the left by $\theta_{i_1}^{(n_1)}$, we obtain
$$b'\equiv\theta_{i_1}^{(n_1)}b'_1
\equiv\sum_{\substack{b''_1\in\mathbf B''\\\varphi_{i_1}(b''_1)=0}}\langle
(b''_1)^*,b'_1\rangle\;\theta_{i_1}^{(n_1)}b''_1\\
\equiv\sum_{\substack{b''_1\in\mathbf B''\\\varphi_{i_1}(b''_1)=0}}\langle
(b''_1)^*,b'_1\rangle\;\tilde e_{i_1}^{n_1}b''_1
\pmod{\theta_{i_1}^{n_1+1}\mathbf f}.$$
Since $\theta_{i_1}^{n_1+1}\mathbf f$ is spanned by
$\{b''\in\mathbf B''\mid\varphi_{i_1}(b'')>n_1\}$, we see that
if $\langle(b'')^*,b'\rangle\neq0$, then either
$\varphi_{i_1}(b'')>n_1$, or $b''=\tilde e_{i_1}^{n_1}b''_1$ with
$\varphi_{i_1}(b''_1)=0$ and $\langle(b''_1)^*,b'_1\rangle=\langle
(b'')^*,b'\rangle$. Assertions~\ref{it:LOa} and~\ref{it:LOb} for
$b'$ and $\mathbf i$ thus readily follow from the corresponding
assertions for $b'_1$ and $\mathbf j$.

Now let $b''\in\mathbf B''$, let $n_1=\varphi_{i_1}(b'')$, and let
$b''_1=\tilde f_{i_1}^{\max}b''$. By induction, there exists
$b'_1\in\mathbf B'$ such that $\langle(b''_1)^*,b'_1\rangle\neq0$
and $\Phi_{\mathbf j}(b'_1)=\Phi_{\mathbf j}(b''_1)$. We then have
$0\leq\varphi_{i_1}(b'_1)\leq\varphi_{i_1}(b''_1)=0$. Let
$b'=\tilde e_{i_1}^{n_1}b'_1$. Then
$\Phi_{\mathbf i}(b')=\Phi_{\mathbf i}(b'')$, and also, by
the reasoning used in the proof of~\ref{it:LOa},
$\langle(b'')^*,b'\rangle=\langle(b''_1)^*,b'_1\rangle\neq0$.
This shows~\ref{it:LOc}.
\end{proof}

\begin{theorem}
\label{th:BerKazh}
Let $\mathbf B'$ and $\mathbf B''$ be two bases of canonical type of
$\mathbf f$. Then there is a unique bijection
$\Xi:\mathbf B'\to\mathbf B''$ such that
$\Phi_{\mathbf i}(b')=\Phi_{\mathbf i}(\Xi(b'))$ for any finite
sequence $\mathbf i$ of elements of $I$ and any $b'\in\mathbf B'$.
Moreover, $\Xi$ is an isomorphism of crystals which commutes with
the action of the involution $\sigma$.
\end{theorem}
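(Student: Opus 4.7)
My plan is to deduce the theorem from Proposition~\ref{pr:LexOrd} combined with Lemma~\ref{le:UniqBij}, working one weight space at a time. Fix $\nu\in Q_+$ and note that $\mathbf f_\nu$ is finite dimensional. By iterating Lemma~\ref{le:HighWt} and noting that once $\widetilde F_{\mathbf i}(b)=1$ any further string operations keep $b$ at $1$, I can choose a single sequence $\mathbf i$ such that $\widetilde F_{\mathbf i}(b)=1$ for every $b\in\mathbf B'_\nu\cup\mathbf B''_\nu$. For such an $\mathbf i$ the inversion $b=\tilde e_{i_1}^{n_1}\tilde e_{i_2}^{n_2}\cdots\tilde e_{i_\ell}^{n_\ell}\cdot 1$ with $\Phi_{\mathbf i}(b)=(n_1,\ldots,n_\ell)$ shows that $\Phi_{\mathbf i}$ is injective on each basis; I use its image, ordered by $\leq_{\lex}$, as a common index set for $\mathbf B'_\nu$ and $\mathbf B''_\nu$.

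For this indexing, Proposition~\ref{pr:LexOrd}\ref{it:LOa} makes the transition matrix lower triangular, \ref{it:LOc} produces a partner with nonzero coefficient on the diagonal for every $b''\in\mathbf B''_\nu$, and \ref{it:LOb} identifies that coefficient with $\langle 1^*,1\rangle=1$; the matrix is therefore lower unitriangular, and I take $\Xi_\nu$ to be the induced bijection $\mathbf B'_\nu\to\mathbf B''_\nu$. If $\mathbf j$ is any second sequence of the same type, the same reasoning gives a second lower unitriangular presentation of the same matrix, so Lemma~\ref{le:UniqBij} forces the induced bijection to coincide with $\Xi_\nu$. For an arbitrary sequence $\mathbf j$, extending to $\mathbf j\mathbf k$ with $\mathbf k$ chosen good and reading off the first $|\mathbf j|$ coordinates of $\Phi_{\mathbf j\mathbf k}$ gives the desired equality $\Phi_{\mathbf j}(b')=\Phi_{\mathbf j}(\Xi_\nu(b'))$. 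Uniqueness of the global $\Xi$ follows from the same rigidity: any candidate bijection must preserve every $\Phi_{\mathbf i}$ and is therefore pinned down element by element.

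For the crystal structure, preservation of $\wt$, $\varphi_i$, and $\varepsilon_i$ is immediate from the first coordinate of $\Phi_{(i)}$. The intertwining with $\tilde f_i^{\max}$ uses the factorization $\Phi_{(i,\mathbf j)}(b)=\bigl(\varphi_i(b),\Phi_{\mathbf j}(\tilde f_i^{\max}b)\bigr)$: both $\tilde f_i^{\max}\Xi(b)$ and $\Xi(\tilde f_i^{\max}b)$ then have identical $\Phi_{\mathbf j}$ for every $\mathbf j$ and therefore coincide. The single-step operators $\tilde e_i$ and $\tilde f_i$ follow because in a lower normal crystal an element is determined by the pair $(\varphi_i,\tilde f_i^{\max})$ once its $i$-position is fixed. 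For the $\sigma$-compatibility, the key observation is that the axioms defining a basis of canonical type are $\sigma$-symmetric, since applying the antiautomorphism $\sigma$ turns the right ideals $\theta_i^p\mathbf f$ into the left ideals $\mathbf f\theta_i^p$. A parallel construction using left string parametrizations $\Phi_{\mathbf i}^*$ produces a bijection $\Xi^*$; the same transition matrix is lower unitriangular under this second indexing, so Lemma~\ref{le:UniqBij} yields $\Xi^*=\Xi$. Combining with the elementary identity $\Phi_{\mathbf i}(\sigma c)=\Phi_{\mathbf i}^*(c)$ (which follows directly from $\sigma(\theta_i^p\mathbf f)=\mathbf f\theta_i^p$), the composite $\sigma\Xi\sigma$ preserves every $\Phi_{\mathbf i}$ and must equal $\Xi$ by uniqueness; since $\sigma$ is an involution, this gives the desired $\Xi\sigma=\sigma\Xi$.

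The substantive bookkeeping is the simultaneous choice, for each weight $\nu$, of a good sequence $\mathbf i$ that works uniformly on $\mathbf B'_\nu\cup\mathbf B''_\nu$, together with the repeated appeal to Lemma~\ref{le:UniqBij} to glue the various bijections arising from different sequences; all the real analytic content is absorbed into Proposition~\ref{pr:LexOrd}.
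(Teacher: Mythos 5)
Your proof is correct and follows the same essential strategy as the paper: Proposition~\ref{pr:LexOrd} plus Lemma~\ref{le:UniqBij}, with injectivity of $\Phi_{\mathbf i}$ obtained from Lemma~\ref{le:HighWt}. The only differences are presentational: you work weight-by-weight with finite sequences whereas the paper passes to a single infinite sequence with each $i$ appearing infinitely often; your $\sigma$-argument via $\Phi^*_{\mathbf i}$ and the identity $\Phi_{\mathbf i}(\sigma c)=\Phi^*_{\mathbf i}(c)$ is a slightly more explicit version of the paper's observation that the indexing $\sigma\circ\Phi_{\mathbf i}^{-1}$ also yields a lower-unitriangular transition matrix; and you spell out the verification that $\Xi$ is a crystal isomorphism (via the factorization $\Phi_{(i,\mathbf j)}(b)=(\varphi_i(b),\Phi_{\mathbf j}(\tilde f_i^{\max}b))$), a step the paper leaves implicit.
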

\begin{proof}
Proposition~\ref{pr:LexOrd} generalizes in an obvious way to infinite
sequences $\mathbf i=(i_1,i_2,\ldots)$ of elements of $I$, because
for any element $b$ in a basis of canonical type, the sequence
$$(b,\tilde f_{i_1}^{\max}b,\tilde f_{i_2}^{\max}\tilde
f_{i_1}^{\max}b,\ldots)$$
eventually becomes constant for weight reasons. In this situation,
if each $i\in I$ appears an infinite number of times in $\mathbf i$,
then the limit of this sequence is necessarily equal to $1$, by
Lemma~\ref{le:HighWt}.

Let us fix such a sequence $\mathbf i$. Then for any basis $\mathbf B$
of canonical type, $\Phi_{\mathbf i}$ is an injective map from
$\mathbf B$ to the set $\mathbb N^{(\infty)}$ of sequences of
non-negative integers with finitely many nonzero terms.
In addition, $\mathcal C_{\mathbf i}=\Phi_{\mathbf i}(\mathbf B)$
does not depend on $\mathbf B$, by Proposition~\ref{pr:LexOrd}~\ref{it:LOc}.

We can thus index any basis $\mathbf B$ of canonical type by
$\mathcal C_{\mathbf i}$. Using this for two bases $\mathbf B'$
and $\mathbf B''$ of canonical type, we moreover deduce from
Proposition~\ref{pr:LexOrd}~\ref{it:LOa} and~\ref{it:LOb} that
the transition matrix is lower unitriangular if we endow
$\mathcal C_{\mathbf i}$ with the lexicographic order on
$\mathbb N^{(\infty)}$. From Lemma~\ref{le:UniqBij}, we conclude
that the bijection $\Xi:\mathbf B'\to\mathbf B''$ defined by the
diagram
$$\xymatrix@R=6pt{\mathbf B'\ar[dr]_{\Phi_{\mathbf i}}\ar[rr]^\Xi&&
\mathbf B''\ar[dl]^{\Phi_{\mathbf i}}\\&\mathcal C_{\mathbf i}&}$$
does not depend on $\mathbf i$. (Lemma~\ref{le:UniqBij} can be
applied in our context because the weight spaces of $\mathbf f$
are finite dimensional.)

Lastly, we observe that the transition matrix is also lower
unitriangular if we use the indexations
$\sigma\circ\Phi_{\mathbf i}^{-1}$ from $\mathcal C_{\mathbf i}$
to $\mathbf B'$ and $\mathbf B''$, and so
$\Xi=\sigma\circ\Xi\circ\sigma$, again by Lemma~\ref{le:UniqBij}.
\end{proof}

The crystal common to all bases of canonical type is denoted by
$B(-\infty)$.

\begin{other}{Remark}
\label{rk:StringCone}
Let $\mathbf i$ and $\mathcal C_{\mathbf i}$ be as in the proof of
Theorem~\ref{th:BerKazh}. To each sequence $\mathbf n=(n_1,n_2,\ldots)$
in $\mathcal C_{\mathbf i}$, define $\Theta_{\mathbf i}^{(\mathbf n)}=
\theta_{i_1}^{(n_1)}\theta_{i_2}^{(n_2)}\cdots$. Let $\mathbf B$
be a basis of canonical type of $\mathbf f$. Arguing as in the
proof of Proposition~\ref{pr:LexOrd}, one shows that
$$\bigl\langle b^*,\Theta_{\mathbf i}^{(\mathbf n)}\bigr\rangle=
\begin{cases}
1&\text{if $\mathbf n=\Phi_{\mathbf i}(b)$,}\\
0&\text{if $\mathbf n\not\leq_{\lex}\Phi_{\mathbf i}(b)$.}
\end{cases}$$
It follows that the elements $\Theta_{\mathbf i}^{(\mathbf n)}$ form
a basis of $\mathbf f$. This result is due to Lakshmibai
(\cite{Lakshmibai95}, Theorems~6.5 and~6.6).
\end{other}

\subsection{The string order on $B(-\infty)$}
\label{ss:StrOrder}
Given $(b',b'')$ and $(c',c'')$ in $B(-\infty)^2$, we write
$(b',b'')\approx(c',c'')$ if one of the following three conditions
holds:
\begin{itemize}
\item
There is $i\in I$ such that $\varphi_i(b')=\varphi_i(b'')$ and
$(c',c'')=(\tilde e_ib',\tilde e_ib'')$.
\item
There is $i\in I$ such that $\varphi_i(b')=\varphi_i(b'')>0$ and
$(c',c'')=\bigl(\tilde f_ib',\tilde f_ib''\bigr)$.
\item
$(c',c'')=(\sigma(b'),\sigma(b''))$.
\end{itemize}

Given $(b',b'')\in B(-\infty)^2$, we write $b'\leq_{\str}b''$
if $b'$ and $b''$ have the same weight and if for any finite
sequence of elementary moves
$$(b',b'')=(b'_0,b''_0)\approx(b'_1,b''_1)\approx\cdots
\approx(b'_\ell,b''_\ell),$$
one has $\varphi_i(b'_\ell)\leq\varphi_i(b''_\ell)$ for all $i\in I$.

\begin{proposition}
\label{pr:StrOrder}
\begin{enumerate}
\item
\label{it:SOa}
The relation $\leq_{\str}$ is an order on $B(-\infty)$.
\item
\label{it:SOb}
The transition matrix between two bases of canonical type is lower
unitriangular w.r.t.\ the order $\leq_{\str}$.
\end{enumerate}
\end{proposition}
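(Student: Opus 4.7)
My plan is first to verify the three order axioms for part~\ref{it:SOa}, and then to deduce part~\ref{it:SOb} by showing that the matrix coefficient $\langle (b'')^*,b'\rangle$ is preserved by every elementary move, at which point Proposition~\ref{pr:LexOrd} does the rest.

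For part~\ref{it:SOa}, reflexivity is immediate from the empty move sequence. For antisymmetry, if $b'\leq_{\str}b''$ and $b''\leq_{\str}b'$ then the empty sequence already forces $\varphi_i(b')=\varphi_i(b'')$ for every $i$, and the same equality is preserved by every common $\tilde f_i$-move; thus $\Phi_{\mathbf i}(b')=\Phi_{\mathbf i}(b'')$ for any infinite sequence $\mathbf i$, and the injectivity of $\Phi_{\mathbf i}$ for an $\mathbf i$ hitting each $i\in I$ infinitely often (established in the proof of Theorem~\ref{th:BerKazh}) gives $b'=b''$. For transitivity, given $b'\leq_{\str}b''\leq_{\str}b'''$ and a valid elementary move sequence on $(b',b''')$, I argue by induction on its length that the same sequence is valid on $(b',b'')$ and on $(b'',b''')$: at each step, the two hypotheses supply $\varphi_i(b'_{k-1})\leq\varphi_i(b''_{k-1})\leq\varphi_i(b'''_{k-1})$, so the equality of $\varphi_i$'s required by a move on the outer pair squeezes the middle $\varphi_i$ to coincide and the move lifts to the whole triple; a $\sigma$-move lifts trivially. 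Splicing the two hypotheses then yields $\varphi_i(b'_\ell)\leq\varphi_i(b''_\ell)\leq\varphi_i(b'''_\ell)$ at the end.

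For part~\ref{it:SOb}, I identify both bases with $B(-\infty)$ via the bijection $\Xi$ of Theorem~\ref{th:BerKazh} and, for $b',b''\in B(-\infty)$, I write $\langle (b'')^*,b'\rangle$ for the matrix coefficient of the corresponding basis elements. The crucial input is that the calculation in the proof of Proposition~\ref{pr:LexOrd}, specialised to the one-letter sequence $(i)$, yields the unconditional identity
\[
\langle (b'')^*,b'\rangle=\bigl\langle(\tilde f_i^{\max}b'')^*,\tilde f_i^{\max}b'\bigr\rangle
\qquad\text{whenever}\qquad\varphi_i(b')=\varphi_i(b'').
\]
Applying this identity both to $(b',b'')$ and to $(\tilde f_ib',\tilde f_ib'')$, respectively $(\tilde e_ib',\tilde e_ib'')$, shows that a single $\tilde f_i$ or $\tilde e_i$ move preserves the coefficient; the $\sigma$-move preserves it because both $\mathbf B'$ and $\mathbf B''$ are $\sigma$-stable and $\sigma$ is an antiautomorphism of $\mathbf f$. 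Starting from $\langle (b'')^*,b'\rangle\neq0$, any elementary move sequence therefore keeps the coefficient nonzero, and Proposition~\ref{pr:LexOrd}\ref{it:LOa} applied with the sequence $(i)$ gives $\varphi_i(b'_\ell)\leq\varphi_i(b''_\ell)$ for every $i$, which is exactly $b'\leq_{\str}b''$.

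For the unitriangular part, iterating the same identity along an infinite $\mathbf i$ hitting each $i\in I$ infinitely often reduces $\langle b^*,b\rangle$ to $\langle 1^*,1\rangle=1$, the weight dropping strictly at each nontrivial step by Lemma~\ref{le:HighWt}. The main conceptual step is to extract the unconditional form of the coefficient identity from the proof of Proposition~\ref{pr:LexOrd} and recognise that it turns the qualitative lex-inequality on strings into a quantitative invariance of matrix coefficients under the elementary moves; everything else is order-theoretic bookkeeping.
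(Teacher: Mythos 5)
Your proposal is correct and follows essentially the same route as the paper: transitivity by interpolating the middle element along the move sequence, antisymmetry via the injectivity of the string parametrizations $\Phi_{\mathbf i}$, and part (ii) via the observation that elementary moves $\approx$ preserve the matrix coefficients $\langle(b'')^*,b'\rangle$, which is exactly the "key observation" the paper extracts from the computation in the proof of Proposition~\ref{pr:LexOrd}. Your write-up merely makes explicit (the unconditional coefficient identity and the reduction of the diagonal entries to $\langle1^*,1\rangle=1$) what the paper leaves as "arguments similar to those used in the proof of Proposition~\ref{pr:LexOrd}".
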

\begin{proof}
The relation $\leq_{\str}$ is obviously reflexive. Let us
show that it is transitive. Suppose that $b'\leq_{\str}b''$ and
$b''\leq_{\str}b'''$ and let us consider a finite sequence
$$(b',b''')=(b'_0,b'''_0)\approx(b'_1,b'''_1)\approx\cdots
\approx(b'_\ell,b'''_\ell).$$
By induction, we construct elements $b''_0=b''$, $b''_1$, \dots,
$b''_\ell$ such that
$$(b',b'')=(b'_0,b''_0)\approx(b'_1,b''_1)\approx\cdots
\approx(b'_\ell,b''_\ell)$$
and
$$(b'',b''')=(b''_0,b'''_0)\approx(b''_1,b'''_1)\approx\cdots
\approx(b''_\ell,b'''_\ell).$$
More precisely, assuming that $b''_{k-1}$ is constructed, the
assumption $b'\leq_{\str}b''$ and $b''\leq_{\str}b'''$ implies that
$\varphi_i(b'_{k-1})\leq\varphi_i(b''_{k-1})\leq\varphi_i(b'''_{k-1})$
for all $i\in I$. Now:
\begin{itemize}
\item
If $\varphi_i(b'_{k-1})=\varphi_i(b'''_{k-1})$ and
$(b'_k,b'''_k)=(\tilde e_ib'_{k-1},\tilde e_ib'''_{k-1})$,
then we note that
$\varphi_i(b'_{k-1})=\varphi_i(b''_{k-1})=\varphi_i(b'''_{k-1})$,
and we set $b''_k=\tilde e_ib''_{k-1}$.
\item
If $\varphi_i(b'_{k-1})=\varphi_i(b'''_{k-1})>0$ and
$(b'_k,b'''_k)=\bigl(\tilde f_ib'_{k-1},\tilde f_ib'''_{k-1}\bigr)$,
then we note that
$\varphi_i(b'_{k-1})=\varphi_i(b''_{k-1})=\varphi_i(b'''_{k-1})>0$
and we set $b''_k=\tilde f_ib''_{k-1}$.
\item
If $(b'_k,b'''_k)=(\sigma(b'_{k-1}),\sigma(b'''_{k-1}))$, then we
simply set $b''_k=\sigma(b''_{k-1})$.
\end{itemize}

Lastly, we note that if $b'\leq_{\str}b''$, then
$\Phi_{\mathbf i}(b')\leq_{\lex}\Phi_{\mathbf i}(b'')$ for any
sequence $\mathbf i$ of elements of $I$. If in addition
$b''\leq_{\str}b'$, then $\Phi_{\mathbf i}(b')=\Phi_{\mathbf i}(b'')$.
The antisymmetry of $\leq_{\str}$ thus follows from the fact that
the map $\Phi_{\mathbf i}$ separates the points of $B(-\infty)$,
provided $\mathbf i$ has been chosen long enough (see the proof
of Theorem~\ref{th:BerKazh}). Assertion~\ref{it:SOa} is proved.

Assertion~\ref{it:SOb} follows from arguments similar to those
used in the proof of Proposition~\ref{pr:LexOrd}, the key
observation being that given two bases of canonical type
$\mathbf B'$ and $\mathbf B''$, for any
$(b',c')\in(\mathbf B')^2$ and $(b'',c'')\in(\mathbf B'')^2$,
$$(b',b'')\approx(c',c'')\ \Longrightarrow\
\langle(b'')^*,b'\rangle=\langle(c'')^*,c'\rangle.$$
\end{proof}

\begin{other}{Examples}
\label{ex:StrOrder}
\begin{enumerate}
\item
\label{it:ESOa}
If the Cartan matrix $A$ is of type $A_3$, then the order
$\leq_{\str}$ is trivial: for any $b'$ and $b''$ in $B(-\infty)$,
the condition $b'\leq_{\str}b''$ implies $b'=b''$. This readily
follows from Lemma~10.2 in~\cite{BerensteinZelevinsky93} by
induction on the weight.

Combining this result with Proposition~\ref{pr:StrOrder}~\ref{it:SOb},
we see that in type $A_3$, $\mathbf f$ has only one basis
of canonical type. (A similar uniqueness assertion had been
obtained by Berenstein and Zelevinsky for string bases, see
\cite{BerensteinZelevinsky93}, Theorem~9.1.)

Similar results hold in type $A_1$ or $A_2$.
\item
\label{it:ESOb}
Consider now the type $A_4$ with the usual numbering of the vertices
of the Dynkin diagram. One can check that for the elements
$$b'=\tilde e_3^4\tilde e_2^4\tilde e_1^4\tilde
e_4^4\tilde e_3^4\tilde e_2^41\quad\text{and}\quad
b''=\tilde e_2\tilde e_1^3\tilde e_4\tilde e_3^7\tilde
e_2^7\tilde e_1\tilde e_4^3\tilde e_31,$$
one has $\varphi_i(b')<\varphi_i(b'')$ and
$\varphi_i(\sigma(b'))<\varphi_i(\sigma(b''))$ for each $i\in I$;
it follows that $b'<_{\str}b''$.
\item
\label{it:ESOc}
As a last example, we consider the type $A_r$. The word
$$\mathbf i=(1,2,1,3,2,1,\dots,r,r-1,\ldots,2,1)$$
is a reduced decomposition of the longest element in the Weyl group
of $\mathfrak g$. The algebra $\mathbb Q[N]$ is a cluster algebra,
and from the datum of $\mathbf i$, one can construct a seed in
$\mathbb Q[N]$ by the process described in~\cite{GeissLeclercSchroer08},
Theorem~13.2. The cluster monomials built from this seed belong
to the dual semicanonical basis (\cite{GeissLeclercSchroer08},
Theorem~16.1), so are naturally indexed by a subset
$C\subseteq B(-\infty)$. One can show that any element $b\in C$ is
minimal w.r.t.\ the order $\leq_{\str}$.
By Proposition~\ref{pr:StrOrder}~\ref{it:SOb}, this minimality implies
that the element indexed by $b$ in the dual of a basis of canonical
type is independent of the choice of this basis. In other words, the
cluster monomials attached to our seed belong to the dual of any basis
of canonical type. (Reineke had shown that they belong to the dual
of the canonical basis, see Theorem~6.1 in~\cite{Reineke99}.)
\end{enumerate}
\end{other}

\section{Inclusion of MV polytopes}
\label{se:IncMVPol}
In this section, the Cartan matrix $A$ is supposed to be of finite type.

\subsection{Lusztig data}
\label{ss:LusztigData}
Let $W$ be the Weyl group of $\mathfrak g$; it acts on $\mathfrak h$
and on $\mathfrak h^*$ and is generated by the simple reflections
$s_i$, for $i\in I$. We denote by $(\omega_i^\vee)_{i\in I}$ the
basis of $\mathfrak h$ dual to the basis $(\alpha_i)_{i\in I}$ of
$\mathfrak h^*$. A coweight $\gamma\in\mathfrak h$ is said to be a
chamber coweight if it is $W$-conjugated to a $\omega_i^\vee$; we
denote by $\Gamma$ the set of all chamber coweights.

Let $N$ be the number of positive roots; this is also the length of
the longest element $w_0$ in $W$. We denote by $\mathscr X$ the set
of all sequences $\mathbf i=(i_1,\ldots,i_N)$ such that
$w_0=s_{i_1}\cdots s_{i_N}$. An element $\mathbf i\in\mathscr X$
defines a sequence $(\beta_k)$ of positive roots and a sequence
$(\gamma_k)$ of chamber coweights, for $1\leq k\leq N$, as follows:
$$\beta_k=s_{i_1}\cdots s_{i_{k-1}}\alpha_{i_k},\qquad
\gamma_k=-s_{i_1}\cdots s_{i_k}\omega_{i_k}^\vee.$$
It is well known that $(\beta_k)$ is an enumeration
of the positive roots. One easily checks that
$\langle\gamma_k,\beta_\ell\rangle$ is nonnegative if
$k\geq\ell$, nonpositive if $k<\ell$, and is $1$ if $k=\ell$.

Let $v$ be an indeterminate. Let $(d_i)$ be a family of positive
integers such that the matrix $(d_ia_{i,j})$ is symmetric.
For $n\in\mathbb N$ and $i\in I$, we set
$[n]_i=(v^{d_in}-v^{-d_in})/(v^{d_i}-v^{-d_i})$ and
$[n]_i!=[1]_i\cdots[n]_i$. Let $U_q(\mathfrak g)$ be the quantized
enveloping algebra of $\mathfrak g$; it is a $\mathbb Q(v)$-algebra
generated by elements $E_i$, $F_i$ and $K_i$, for $i\in I$, see for
instance~\cite{BerensteinZelevinsky01}, Section~3.1. We define the
divided powers of $E_i$ by $E_i^{(n)}=E_i^n/[n]_i!$. Let
$U_q(\mathfrak n_+)$ be the subalgebra of $U_q(\mathfrak g)$
generated by the elements $E_i$, and let $x\mapsto\overline x$ be
the automorphism of $\mathbb Q$-algebra of $U_q(\mathfrak n_+)$
such that $\overline v=v^{-1}$ and $\overline{E_i}=E_i$.

Let $T_i$ be the automorphism of $U_q(\mathfrak g)$ constructed
by Lusztig and denoted by $T'_{i,-1}$ in \cite{Lusztig91}. For a
fixed $\mathbf i\in\mathscr X$, it is known that when
$\mathbf n=(n_1,\ldots,n_N)$ runs over $\mathbb N^N$, the monomials
$$E_{\mathbf i}^{(\mathbf n)}=E_{i_1}^{(n_1)}\;
T_{i_1}\bigl(E_{i_2}^{(n_2)}\bigr)\;\cdots\;
(T_{i_1}\cdots T_{i_{N-1}})\bigl(E_{i_N}^{(n_N)}\bigr)$$
form a PBW basis of $U_q(\mathfrak n_+)$. In addition, for each
$\mathbf n\in\mathbb N^N$, there is a unique bar-invariant element
\begin{equation}
\label{eq:TransCanPBW}
b_{\mathbf i}(\mathbf n)=\sum_{\mathbf m\in\mathbb N^N}
\zeta_{\mathbf m}^{\mathbf n}E_{\mathbf i}^{(\mathbf m)},
\end{equation}
with $\zeta_{\mathbf n}^{\mathbf n}=1$ and
$\zeta_{\mathbf m}^{\mathbf n}\in v^{-1}\mathbb Z[v^{-1}]$ for
$\mathbf m\neq\mathbf n$. These elements $b_{\mathbf i}(\mathbf n)$
form the canonical basis of $U_q(\mathfrak n_+)$, which does not
depend on the choice of $\mathbf i$~\cite{Lusztig90}.

After specialization at $v=1$ and under the isomorphism
$\mathbf f\cong U(\mathfrak n_+)$, this construction gives a basis
of canonical type of $\mathbf f$. The map $\mathbf n\mapsto
b_{\mathbf i}(\mathbf n)$ can thus be regarded as a parameterization
of $B(-\infty)$ by $\mathbb N^N$. The inverse bijection
$B(-\infty)\to\mathbb N^N$ is called Lusztig datum in
direction~$\mathbf i$; we denote it by
$b\mapsto\mathbf n_{\mathbf i}(b)$.

\subsection{MV polytopes}
\label{ss:MVPol}
To each $b\in B(-\infty)$, one associates its MV polytope $\Pol(b)$;
this is a convex polytope in $\mathfrak h^*$, whose vertices belong
to $Q_+\cap(\wt(b)-Q_+)$. Moreover, the map $b\mapsto\Pol(b)$ is
injective.

The polytope $\Pol(b)$ can be constructed in several ways: as the
image by a moment map of a certain projective variety, called a
Mirkovi\'c-Vilonen cycle (whence the name `MV polytope')
\cite{Anderson03,Kamnitzer07,Kamnitzer10}, or as the
Harder-Narasimhan polytope of a general representation of the
preprojective algebra built from the Dynkin diagram of
$\mathfrak g$~\cite{BaumannKamnitzerTingley11}.

For our purpose however, the most relevant definition of $\Pol(b)$
uses the notion of Lusztig datum. Let $\mathbf i\in\mathscr X$. As
before, we associate to $\mathbf i$ an enumeration $(\beta_k)$ of
the positive roots of $\mathfrak g$. Let $(n_1,\ldots,n_N)=\mathbf
n_{\mathbf i}(b)$ be the Lusztig datum of $b$ in direction $\mathbf i$.
At first sight, the weight
$$\wt(b)-\sum_{t=1}^kn_t\beta_t$$
depends on $\mathbf i$ and $k$. One can however show that it depends
only on $w=s_{i_1}\cdots s_{i_k}$, so it is legitimate to denote it
by $\mu_w(b)$. Then $\Pol(b)$ can be defined as the convex hull of
the weights $\mu_w(b)$, for all $w\in W$.

By \cite{Kamnitzer10}, the normal fan of $\Pol(b)$ is a coarsening
of the Weyl fan in $\mathfrak h$. The weight $\mu_w(b)$ is a vertex
of $\Pol(b)$ and the normal cone to $\Pol(b)$ at $\mu_w(b)$ is
$wC_0$, where
$$C_0=\{\theta\in\mathfrak h\mid\forall i\in I,\,
\langle\theta,\alpha_i\rangle>0\}$$
is the dominant chamber. In particular, the datum of $\Pol(b)$
determines the weights $\mu_w(b)$.

The polytope $\Pol(b)$ can also be described by its facets.
Specifically, one defines $M_\gamma(b)$ for each chamber coweight
$\gamma\in\Gamma$ by the formula
$$M_{w\omega_i^\vee}(b)=\langle w\omega_i^\vee,\mu_w(b)\rangle$$
(one can check that the right-hand side depends only $w\omega_i^\vee$),
and one has
$$M_{w\omega_i^\vee}(b)=\sup(w\omega_i^\vee)(\Pol(b))\quad\text{and}
\quad\Pol(b)=\{x\in\mathfrak h^*\mid\forall\gamma\in\Gamma,\
\langle\gamma,x\rangle\leq M_\gamma(b)\}.$$
The collection $(M_\gamma(b))$ is called the BZ datum of $b$.

\subsection{The polytope order on $B(-\infty)$}
\label{ss:PolOrder}
Given $(b',b'')\in B(-\infty)^2$, we write $b'\leq_{\pol}b''$ if $b'$
and $b''$ have the same weight and if $\Pol(b')\subseteq\Pol(b'')$.
Obviously, the inclusion between the polytopes is equivalent to
the set of equations $M_\gamma(b')\leq M_\gamma(b'')$, for all
$\gamma\in\Gamma$. The relation $\leq_{\pol}$ is an order on
$B(-\infty)$, because the map $b\mapsto\Pol(b)$ is injective.

This order can also be directly expressed in terms of Lusztig data.
Specifically, let $\mathbf i\in\mathscr X$ and define the sequences
of positive roots $(\beta_k)$ and chamber coweights $(\gamma_k)$ as
in Section~\ref{ss:LusztigData}. For $\mathbf n=(n_1,\ldots,n_N)$ in
$\mathbb N^N$, we set $|\mathbf n|=n_1\beta_1+\cdots+n_N\beta_N$;
this is the weight of the PBW monomial $E_{\mathbf i}^{(\mathbf n)}$.
Given another element $\mathbf m=(m_1,\ldots,m_N)$ in $\mathbb N^N$,
we write $\mathbf n\leq_{\mathbf i}\mathbf m$ if
$|\mathbf m|=|\mathbf n|$ and if
$$\sum_{t=1}^k\langle\gamma_k,\beta_t\rangle n_t\leq
\sum_{t=1}^k\langle\gamma_k,\beta_t\rangle m_t$$
for each $k\in\{1,\ldots,N\}$. The relation $\leq_{\mathbf i}$ is
an order on $\mathbb N^N$, less fine than the lexicographic order.

\begin{proposition}
\label{pr:PolOrderPBWOrder}
Let $(b',b'')\in B(-\infty)^2$. Then $b'\leq_{\pol}b''$ if and only if
$\mathbf n_{\mathbf i}(b')\leq_{\mathbf i}\mathbf n_{\mathbf i}(b'')$
for all $\mathbf i\in\mathscr X$.
\end{proposition}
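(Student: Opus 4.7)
The plan is to translate both sides of the asserted equivalence into inequalities of the form $M_\gamma(b')\leq M_\gamma(b'')$ for chamber coweights $\gamma$, and then to match them up using the characterization of the containment of MV polytopes by the BZ data recalled in Section~\ref{ss:MVPol}.

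Fix $\mathbf i\in\mathscr X$ and $k\in\{1,\ldots,N\}$, and set $w_k=s_{i_1}\cdots s_{i_k}$. Since $\omega_{i_k}^\vee\in\overline{C_0}$, the coweight $w_k\omega_{i_k}^\vee$ is a chamber coweight lying in $w_k\overline{C_0}$, the closure of the normal cone to $\Pol(b)$ at the vertex $\mu_{w_k}(b)$. Hence $M_{w_k\omega_{i_k}^\vee}(b)=\langle w_k\omega_{i_k}^\vee,\mu_{w_k}(b)\rangle$, and a direct expansion using $\mu_{w_k}(b)=\wt(b)-\sum_{t=1}^k n_t(b)\,\beta_t$ together with $\gamma_k=-w_k\omega_{i_k}^\vee$ yields the key identity
$$\sum_{t=1}^k\langle\gamma_k,\beta_t\rangle\,n_t(b)\;=\;M_{w_k\omega_{i_k}^\vee}(b)-\langle w_k\omega_{i_k}^\vee,\wt(b)\rangle.$$
Under the assumption $\wt(b')=\wt(b'')$, the Lusztig datum inequality $\mathbf n_{\mathbf i}(b')\leq_{\mathbf i}\mathbf n_{\mathbf i}(b'')$ is thus equivalent to $M_{w_k\omega_{i_k}^\vee}(b')\leq M_{w_k\omega_{i_k}^\vee}(b'')$ for every $k$. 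The implication ``$b'\leq_\pol b''$ implies $\mathbf n_{\mathbf i}(b')\leq_{\mathbf i}\mathbf n_{\mathbf i}(b'')$ for every $\mathbf i$'' is then immediate.

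For the converse, I would verify that, as $\mathbf i$ ranges over $\mathscr X$ and $k$ over $\{1,\ldots,N\}$, the chamber coweights $w_k\omega_{i_k}^\vee$ exhaust all of $\Gamma$ apart possibly from the fundamental coweights $\{\omega_j^\vee\}_{j\in I}$ themselves. Given $\gamma=u\omega_j^\vee$, let $u_0$ be the minimum length representative of the coset $uW_{I\setminus\{j\}}$; then $\ell(u_0 s_i)>\ell(u_0)$ for each $i\in I\setminus\{j\}$, so if $u_0\neq e$ the unique possible right descent of $u_0$ is $s_j$, whence $u_0$ has a reduced expression ending in $s_j$ which extends to an element of $\mathscr X$ and supplies a pair $(\mathbf i,k)$ with $w_k\omega_{i_k}^\vee=\gamma$. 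In the remaining case $u_0=e$, one has $\gamma=\omega_j^\vee$ and $M_{\omega_j^\vee}(b)=\langle\omega_j^\vee,\wt(b)\rangle$, so the inequality $M_{\omega_j^\vee}(b')\leq M_{\omega_j^\vee}(b'')$ is automatic from $\wt(b')=\wt(b'')$. Combined with the characterization $\Pol(b')\subseteq\Pol(b'')\Longleftrightarrow M_\gamma(b')\leq M_\gamma(b'')$ for all $\gamma\in\Gamma$, this closes the equivalence.

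The main difficulty is the Coxeter-combinatorial surjectivity step just described, i.e.\ producing, for each non-fundamental chamber coweight $\gamma$, a pair $(\mathbf i,k)$ with $w_k\omega_{i_k}^\vee=\gamma$. The arithmetic translation between $\sum_t\langle\gamma_k,\beta_t\rangle\,n_t$ and $M_{w_k\omega_{i_k}^\vee}$ is routine once one has identified $w_k\omega_{i_k}^\vee$ as the chamber coweight supported at the vertex $\mu_{w_k}(b)$.
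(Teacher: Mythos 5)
Your proof is correct and takes essentially the same route as the paper: the same identity relating the partial sums $\sum_{t=1}^k\langle\gamma_k,\beta_t\rangle n_t$ to the BZ datum $M_{s_{i_1}\cdots s_{i_k}\omega_{i_k}^\vee}(b)$, followed by the observation that coweights of this form account for the containment criterion $M_\gamma(b')\leq M_\gamma(b'')$ on all of $\Gamma$. The only difference is that you spell out the Coxeter-combinatorial step (minimal coset representative with unique right descent $s_j$) and treat the fundamental coweights separately as automatic from $\wt(b')=\wt(b'')$, whereas the paper asserts the corresponding covering fact in a single sentence.
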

\begin{proof}
Let $b\in B(-\infty)$ and let $\mathbf i\in\mathscr X$. As in
Section~\ref{ss:LusztigData}, the word $\mathbf i$ defines a sequence
$(\beta_k)$ of positive roots and a sequence $(\gamma_k)$ of
chamber coweights. Write $(n_1,\ldots,n_N)=\mathbf n_{\mathbf i}(b)$.
Then for each $k\in\{1,\ldots,N\}$, we have
$$\sum_{t=1}^k\langle\gamma_k,\beta_t\rangle
n_t-\langle\gamma_k,\wt(b)\rangle=
\langle w\omega_{i_k}^\vee,\mu_w(b)\rangle=
M_{w\omega_{i_k}^\vee}(b),$$
where $w=s_{i_1}\cdots s_{i_k}$.

Given $(b',b'')\in B(-\infty)^2$, the inequality
$\mathbf n_{\mathbf i}(b')\leq_{\mathbf i}\mathbf n_{\mathbf i}(b'')$
is thus equivalent to $\wt(b')=\wt(b'')$ and
$M_\gamma(b')\leq M_\gamma(b'')$ for all $\gamma\in\{s_{i_1}\cdots
s_{i_k}\omega_{i_k}^\vee\mid 1\leq k\leq N\}$. The lemma now follows
from the fact that every chamber coweight can be written
as $s_{i_1}\cdots s_{i_k}\omega_{i_k}^\vee$ for suitable $\mathbf i$
and~$k$.
\end{proof}

\begin{other}{Example}
\label{ex:PolOrder}
In type $A_3$, consider the elements $b'=(\tilde e_1\tilde e_3)
\tilde e_2^2(\tilde e_1\tilde e_3)1$ and $b''=\tilde e_2
(\tilde e_1\tilde e_3)^2\tilde e_21$. Both elements are fixed by
the involution $\sigma$. The polytope $\Pol(b')$ is the convex hull of
\begin{multline*}
\{0,\alpha_1,\alpha_3,\alpha_1+\alpha_2,\alpha_1+\alpha_3,
\alpha_2+\alpha_3,2\alpha_1+\alpha_2+\alpha_3,\alpha_1+2\alpha_2+\alpha_3,\\
\alpha_1+\alpha_2+2\alpha_3,2\alpha_1+2\alpha_2+\alpha_3,
\alpha_1+2\alpha_2+2\alpha_3,2\alpha_1+2\alpha_2+2\alpha_3\}.
\end{multline*}
The polytope $\Pol(b'')$ is the convex hull of
\begin{multline*}
\{0,\alpha_1,\alpha_2,\alpha_3,\alpha_1+\alpha_3,2\alpha_1+\alpha_2,
\alpha_2+2\alpha_3,\alpha_1+2\alpha_2+\alpha_3,\\
2\alpha_1+2\alpha_2+\alpha_3,2\alpha_1+\alpha_2+2\alpha_3,
\alpha_1+2\alpha_2+2\alpha_3,2\alpha_1+2\alpha_2+2\alpha_3\}.
\end{multline*}
From there, one easily checks that $\Pol(b')\subset\Pol(b'')$.
Since $b'$ and $b''$ have the same weight, we conclude that
$b'\leq_{\pol}b''$. This example seems to have been first observed
by Kamnitzer.
\end{other}

\begin{other}{Remark}
\label{rk:StabPolOrder}
For any $b\in B(-\infty)$, the MV polytope $\Pol(\sigma(b))$ is
the image of $\Pol(b)$ by the involution $x\mapsto\wt(b)-x$
(\cite{Kamnitzer07}, Theorem~6.2). In addition, $\varphi_i(b)$ is
the first component of $\mathbf n_{\mathbf i}(b)$ whenever
$\mathbf i$ begins by $i$.

It follows that $b'\leq_{\pol}b''$ if and only if
$\sigma(b')\leq_{\pol}\sigma(b'')$, and that if $b'\leq_{\pol}b''$,
then $\varphi_i(b')\leq\varphi_i(b'')$ for all $i\in I$. The order
$\leq_{\pol}$ has thus some remote parentage with $\leq_{\str}$.

One can in fact define another order, weaker than $\leq_{\str}$
and $\leq_{\pol}$, in the following way: $b'\leq b''$ if $b'$ and
$b''$ have the same weight and if for any finite sequence of
elementary moves
$$(b',b'')=(b'_0,b''_0)\approx(b'_1,b''_1)\approx\cdots
\approx(b'_\ell,b''_\ell),$$
one has $b'_\ell\leq_{\pol}b''_\ell$. We believe that the order
$\leq$ is trivial in type $A_4$; indeed, with the help of a computer
running GAP~\cite{GAP4,DeGraaf07}, we checked that in type $A_4$,
the order $\leq$ is trivial in weights up to
$10\alpha_1+16\alpha_2+16\alpha_3+10\alpha_4$.
\end{other}

\subsection{Comparison between the canonical basis and PBW bases}
\label{ss:CompCanPBW}
We come back to the equation~\eqref{eq:TransCanPBW} that defines
the coefficients of the transition matrix between the canonical
basis and a PBW basis. Our aim in this section is to obtain a
necessary condition so that $\zeta_{\mathbf m}^{\mathbf n}\neq0$.
Our result (Corollary~\ref{co:TriangPBWCan}) generalizes Theorem~9.13~(a)
in~\cite{Lusztig90} to arbitrary words $\mathbf i\in\mathscr X$,
not necessarily compatible with a quiver orientation; it complements
Proposition~5.1 in~\cite{DeGraaf02} and Theorem~3.13~(ii)
in~\cite{BeckNakajima04}.

We choose $\mathbf i\in\mathscr X$. Then the PBW monomials
$E_{\mathbf i}^{(\mathbf n)}$ form a basis of $\mathbf f$.
We can also consider the partial order $\leq_{\mathbf i}$
on $\mathbb N^N$.
\begin{lemma}
\label{le:ConvPBW}
Let $\mathbf m_1$, $\mathbf m_2$ and $\mathbf n$ in $\mathbb N^N$
be such that $|\mathbf m_1|+|\mathbf m_2|=|\mathbf n|$. Let
$\mathbf p\in\mathbb N^N$ be such that
$E_{\mathbf i}^{(\mathbf p)}$ appears with a nonzero
coefficient in the expansion of
$E_{\mathbf i}^{(\mathbf m_1)}E_{\mathbf i}^{(\mathbf m_2)}$
on the PBW basis. Let $0\leq k\leq N$ and define
$\mathbf n_1\in\mathbb N^k\times\{0\}^{N-k}$
and $\mathbf n_2\in\{0\}^k\times\mathbb N^{N-k}$ so that
$\mathbf n=\mathbf n_1+\mathbf n_2$. Then
$$\bigl(\mathbf m_1\geq_{\mathbf i}\mathbf n_1\quad\text{and}\quad
\mathbf m_2\geq_{\mathbf i}\mathbf n_2\bigr)\ \Longrightarrow\
\mathbf p\geq_{\mathbf i}\mathbf n.$$
In addition, if one of the inequalities
$\mathbf m_1\geq_{\mathbf i}\mathbf n_1$ and
$\mathbf m_2\geq_{\mathbf i}\mathbf n_2$ is strict,
then $\mathbf p>_{\mathbf i}\mathbf n$.
\end{lemma}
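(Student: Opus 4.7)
Unpacking $\leq_{\mathbf i}$, since $|\mathbf p| = |\mathbf n|$ is automatic from weight grading, it suffices to show for each $j \in \{1,\ldots,N\}$ that
\[\pi_j(\mathbf p) \;\geq\; \pi_j(\mathbf n_1) + \pi_j(\mathbf n_2), \qquad \pi_j(\mathbf m) := \sum_{t=1}^j \langle \gamma_j, \beta_t \rangle m_t,\]
with strict inequality whenever one of the hypothesized inequalities is strict at $j$. Two initial simplifications help: for $j \leq k$, one has $\pi_j(\mathbf n_2) = 0$ since $\mathbf n_2$ is supported on positions $> k \geq j$, and the hypothesis on $\mathbf m_2$ is automatic because $\pi_j(\mathbf m_2) \geq 0$ follows from the sign rule $\langle \gamma_j, \beta_t \rangle \geq 0$ for $t \leq j$; dually, for $j > k$, $\pi_j(\mathbf n_1) = \langle \gamma_j, |\mathbf m_1| \rangle$ and the hypothesis on $\mathbf m_1$ is automatic. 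So the real content is $\pi_j(\mathbf p) \geq \pi_j(\mathbf n_1)$ for $j \leq k$, and $\pi_j(\mathbf p) \geq \langle \gamma_j, |\mathbf m_1| \rangle + \pi_j(\mathbf n_2)$ for $j > k$.

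The plan is to induct on the inversion count $I(\mathbf m_1, \mathbf m_2) := \sum_{a > b} m_{1,a}\, m_{2,b}$, which measures how far the word $E_{\mathbf i}^{(\mathbf m_1)} E_{\mathbf i}^{(\mathbf m_2)}$ is from PBW order. When $I = 0$, the supports of $\mathbf m_1$ and $\mathbf m_2$ interleave correctly and the product equals a positive scalar multiple of the single PBW monomial $E_{\mathbf i}^{(\mathbf m_1 + \mathbf m_2)}$; so $\mathbf p = \mathbf m_1 + \mathbf m_2$, and the required inequalities follow by additivity of $\pi_j$ combined with the hypotheses and the sign rules noted above. When $I > 0$, pick an adjacent inversion $E_{\beta_a}^{(s)} E_{\beta_b}^{(t)}$ (coming from $\mathbf m_1$ then $\mathbf m_2$, with $a > b$) and apply the Levendorskii-Soibelman identity in $\mathbf f \cong U(\mathfrak n_+)$: for $a > b$, the commutator $[E_{\beta_a}, E_{\beta_b}]$ is a linear combination of PBW monomials $\prod_t E_{\beta_t}^{q_t}$ with support in the open interval $(b, a)$ and total weight $\beta_a + \beta_b$. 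Inserting this into the word produces one straightened term with $I$ strictly smaller, plus bracket corrections, each of which, when regrouped as a product $E_{\mathbf i}^{(\mathbf m_1')} E_{\mathbf i}^{(\mathbf m_2')}$, also has strictly smaller $I$ and is handled by the induction hypothesis, with $\mathbf n$ re-split appropriately at $k$ for each new factorization.

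The convexity estimate that makes this work is: for $b < a$ and $\mathbf q \in \mathbb N^N$ with support in $(b, a)$ and $\sum_t q_t \beta_t = \beta_a + \beta_b$, one has $\pi_j(\mathbf q) \geq \pi_j(\delta_a) + \pi_j(\delta_b)$, with equality if $\{a, b\}$ lies on one side of $j$. This follows from $\sum_t q_t \langle \gamma_j, \beta_t \rangle = \langle \gamma_j, \beta_a + \beta_b \rangle$ together with $\langle \gamma_j, \beta_t \rangle \geq 0$ for $t \leq j$ and $\leq 0$ for $t > j$, via a short case analysis on the positions of $a, b$ relative to $j$. The main obstacle will be maintaining the implicit cone hypotheses $|\mathbf m_1| \in \mathbb N \beta_1 + \cdots + \mathbb N \beta_k$ and $|\mathbf m_2| \in \mathbb N \beta_{k+1} + \cdots + \mathbb N \beta_N$ as the recursion proceeds: after each Levendorskii-Soibelman step the weights of the new multiplicands are redistributed, and one must verify that a valid re-splitting of $\mathbf n$ at $k$ is still available. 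The strict-inequality statement is obtained by tracing the single recursion step at which the strict hypothesis on $(\mathbf m_\alpha, \mathbf n_\alpha)$ first activates, via the strict half of the convexity estimate.
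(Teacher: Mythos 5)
Your reduction to the inequalities $\pi_j(\mathbf p)\geq\pi_j(\mathbf n)$, the sign analysis, and the base case $I=0$ are all fine, but the inductive step is a genuine gap — the very obstacle you flag at the end is not a technicality, and as formulated your induction cannot be closed. The induction hypothesis you want to invoke is the lemma itself, whose data include the decomposition $\mathbf n=\mathbf n_1+\mathbf n_2$ at $k$; but this decomposition is unique (it is just truncation of $\mathbf n$), so there is no ``re-splitting of $\mathbf n$ at $k$'' available. Whenever the Levendorskii-Soibelman correction straddles position $k$ (an inversion with $b\leq k<a$), the weight $\beta_a+\beta_b$ is redistributed onto roots $\beta_t$ with $b<t<a$, and however you assign the inserted letters to the two factors you get $|\mathbf m_1'|\neq|\mathbf n_1|$ in general, so the hypotheses $\mathbf m_1'\geq_{\mathbf i}\mathbf n_1$, $\mathbf m_2'\geq_{\mathbf i}\mathbf n_2$ cannot even be formulated and the induction hypothesis does not apply. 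There is also a more mundane problem: after one straightening step the word has the shape $A\,E_{\beta_b}^{(t')}M\,E_{\beta_a}^{(s')}B$ with the inserted letters out of order relative to both remaining halves, so it is not a product of two PBW monomials and the regrouping you rely on is not available either; the induction would have to run over products of arbitrarily many factors with a differently phrased estimate, and finding an estimate that is actually stable under the straightening moves is exactly the delicate point (the naive per-letter bound $\sum_{t\leq j}\langle\gamma_j,\beta_t\rangle$ is not preserved when $b\leq j<a$, since the lost term $\langle\gamma_j,\beta_a\rangle$ is $\leq0$). The strictness claim inherits the same gap.

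The paper avoids all of this by never straightening globally: it fixes $\ell$, splits \emph{both} $\mathbf m_1$ and $\mathbf m_2$ at $\ell$ (not at $k$), writes $E_{\mathbf i}^{(\mathbf m_1)}E_{\mathbf i}^{(\mathbf m_2)}=E_{\mathbf i}^{(\mathbf m_1')}E_{\mathbf i}^{(\mathbf m_1'')}E_{\mathbf i}^{(\mathbf m_2')}E_{\mathbf i}^{(\mathbf m_2'')}$, expands the middle product $E_{\mathbf i}^{(\mathbf m_1'')}E_{\mathbf i}^{(\mathbf m_2')}$ on the PBW basis, and uses the convexity property of PBW bases in its packaged form — the spans of PBW monomials supported on positions $\leq\ell$ and on positions $>\ell$ are subalgebras — to conclude that any $\mathbf p$ occurring satisfies $|\mathbf p'|=|\mathbf m_1'|+|\mathbf q'|$ and $|\mathbf p''|=|\mathbf q''|+|\mathbf m_2''|$. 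The inequality at $\ell$ then follows from exactly the sign rules you use, treating $\ell\leq k$ (discard $\langle\gamma_\ell,|\mathbf q'|\rangle\geq0$ and use only $\mathbf m_1\geq_{\mathbf i}\mathbf n_1$) and $\ell>k$ (discard $\langle\gamma_\ell,|\mathbf q''|\rangle\leq0$ and use only $\mathbf m_2\geq_{\mathbf i}\mathbf n_2$) separately. If you want to keep your LS-straightening viewpoint, you must first reformulate the statement being inducted on along these one-sided, per-$\ell$ lines; as written, your proof is incomplete.
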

\begin{proof}
We have to check the inequality
\begin{equation}
\label{eq:IOrder}
\sum_{t=1}^\ell\langle\gamma_\ell,\beta_t\rangle n_t\leq
\sum_{t=1}^\ell\langle\gamma_\ell,\beta_t\rangle p_t
\end{equation}
for all $\ell\in\{1,\ldots,N\}$. We decompose each element
$\mathbf q\in\mathbb N^N$ as a sum $\mathbf q'+\mathbf q''$,
with $\mathbf q'\in\mathbb N^\ell\times\{0\}^{N-\ell}$ and
$\mathbf q''\in\{0\}^\ell\times\mathbb N^{N-\ell}$; we then
have $E_{\mathbf i}^{(\mathbf q)}=E_{\mathbf i}^{(\mathbf q')}
E_{\mathbf i}^{(\mathbf q'')}$. With this convention,
$E_{\mathbf i}^{(\mathbf p)}$ appears in the expansion of
$E_{\mathbf i}^{(\mathbf m'_1)}E_{\mathbf i}^{(\mathbf m''_1)}
E_{\mathbf i}^{(\mathbf m'_2)}E_{\mathbf i}^{(\mathbf m''_2)}$.
There is thus $\mathbf q\in\mathbb N^N$ such that
$E_{\mathbf i}^{(\mathbf q)}$ appears in the expansion of
$E_{\mathbf i}^{(\mathbf m''_1)}E_{\mathbf i}^{(\mathbf m'_2)}$ and
$E_{\mathbf i}^{(\mathbf p)}$ appears in the expansion of
$E_{\mathbf i}^{(\mathbf m'_1)}E_{\mathbf i}^{(\mathbf q')}
E_{\mathbf i}^{(\mathbf q'')}E_{\mathbf i}^{(\mathbf m''_2)}$.
The convexity property of PBW bases (Lemma~1
in~\cite{LevendorskiiSoibelman91}, Proposition~7
in~\cite{Beck94}, or Theorem~2.3 in~\cite{Xi99c}) imply then
that $|\mathbf p'|=|\mathbf m'_1|+|\mathbf q'|$ and
$|\mathbf p''|=|\mathbf q''|+|\mathbf m''_2|$.

Assume first that $\ell\leq k$.
Since $\mathbf n_1\leq_{\mathbf i}\mathbf m_1$, we have
$$\bigl\langle\gamma_\ell,|\mathbf n'_1|\bigr\rangle\leq
\bigl\langle\gamma_\ell,|\mathbf m'_1|\bigr\rangle.$$
A fortiori,
$$\bigl\langle\gamma_\ell,|\mathbf n'|\bigr\rangle=
\bigl\langle\gamma_\ell,|\mathbf n'_1|\bigr\rangle\leq
\bigl\langle\gamma_\ell,|\mathbf m'_1|+|\mathbf q'|\bigr\rangle
=\bigl\langle\gamma_\ell,|\mathbf p'|\bigr\rangle,$$
which is exactly \eqref{eq:IOrder}.

Now assume that $\ell>k$.
Since $\mathbf n_2\leq_{\mathbf i}\mathbf m_2$, we have
$$\bigl\langle\gamma_\ell,|\mathbf n'_2|\bigr\rangle\leq
\bigl\langle\gamma_\ell,|\mathbf m'_2|\bigr\rangle.$$
Using $|\mathbf m_2|=|\mathbf n_2|$ and
$|\mathbf p''|=|\mathbf q''|+|\mathbf m''_2|$, we get
$$\bigl\langle\gamma_\ell,|\mathbf n''|\bigr\rangle=
\bigl\langle\gamma_\ell,|\mathbf n''_2|\bigr\rangle\geq
\bigl\langle\gamma_\ell,|\mathbf m''_2|\bigr\rangle\geq
\bigl\langle\gamma_\ell,|\mathbf p''|\bigr\rangle.$$
This last relation is equivalent to \eqref{eq:IOrder},
because $|\mathbf n|=|\mathbf p|$.
\end{proof}

We define coefficients $\omega_{\mathbf m}^{\mathbf n}\in\mathbb Q(q)$
by the expansion
$$\overline{E_{\mathbf i}^{(\mathbf n)}}=\sum_{\mathbf m\in\mathbb N^N}
\omega_{\mathbf m}^{\mathbf n}E_{\mathbf i}^{(\mathbf m)}$$
on the PBW basis. It is known that these coefficients actually belong
to $\mathbb Z[v,v^{-1}]$. The following proposition generalizes
equations~9.12~(a) and~(b) in~\cite{Lusztig90}.
\begin{proposition}
\label{pr:TriangBar}
We have $\omega_{\mathbf n}^{\mathbf n}=1$; moreover,
$\omega_{\mathbf m}^{\mathbf n}\neq0$ implies
$\mathbf m\geq_{\mathbf i}\mathbf n$.
\end{proposition}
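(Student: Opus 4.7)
The plan is to induct on the sum $n_1+\cdots+n_N$ of the coordinates of $\mathbf n$, using Lemma~\ref{le:ConvPBW} as the engine. The base case $\mathbf n=0$ is immediate, since $\overline 1=1$. For the inductive step I distinguish two situations according to the support of $\mathbf n$.

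\textbf{Multi-support case.} If $\mathbf n$ has at least two nonzero coordinates, pick $1\le k\le N-1$ so that the decomposition $\mathbf n=\mathbf n_1+\mathbf n_2$, with $\mathbf n_1\in\mathbb N^k\times\{0\}^{N-k}$ and $\mathbf n_2\in\{0\}^k\times\mathbb N^{N-k}$, has both pieces nonzero; then each piece has strictly smaller coordinate sum than $\mathbf n$. Since $E_{\mathbf i}^{(\mathbf n)}=E_{\mathbf i}^{(\mathbf n_1)}E_{\mathbf i}^{(\mathbf n_2)}$ by the very definition of a PBW monomial, and since the bar is a $\mathbb Q$-algebra automorphism,
$$\overline{E_{\mathbf i}^{(\mathbf n)}}=\overline{E_{\mathbf i}^{(\mathbf n_1)}}\cdot\overline{E_{\mathbf i}^{(\mathbf n_2)}}.$$
The induction hypothesis expands each factor as $E_{\mathbf i}^{(\mathbf n_a)}+\sum_{\mathbf m_a>_{\mathbf i}\mathbf n_a}\omega_{\mathbf m_a}^{\mathbf n_a}E_{\mathbf i}^{(\mathbf m_a)}$. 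Multiplying out and reexpanding each product $E_{\mathbf i}^{(\mathbf m_1)}E_{\mathbf i}^{(\mathbf m_2)}$ on the PBW basis, Lemma~\ref{le:ConvPBW} applied with this same $k$ implies that every $E_{\mathbf i}^{(\mathbf p)}$ occurring with a nonzero coefficient satisfies $\mathbf p\ge_{\mathbf i}\mathbf n$; its strict-inequality clause forces the only contribution to $E_{\mathbf i}^{(\mathbf n)}$ itself to come from $(\mathbf m_1,\mathbf m_2)=(\mathbf n_1,\mathbf n_2)$, whose product is $E_{\mathbf i}^{(\mathbf n)}$ with coefficient $1$; hence $\omega_{\mathbf n}^{\mathbf n}=\omega_{\mathbf n_1}^{\mathbf n_1}\omega_{\mathbf n_2}^{\mathbf n_2}=1$.

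\textbf{Single-support case.} When $\mathbf n=n\mathbf e_k$ has a unique nonzero coordinate, no splitting into two nontrivial pieces is available and the induction stalls; I expect this to be the main obstacle. For $k=1$ the element is $E_{i_1}^{(n)}$, which is bar-invariant by definition. For $k\ge 2$ it is the divided power of a PBW root vector, $E_{\beta_k}^{(n)}=(T_{i_1}\cdots T_{i_{k-1}})(E_{i_k}^{(n)})$, and must be treated directly. A natural line of attack exploits the compatibility between the bar and Lusztig's braid operators, in the form $\overline{T_i(x)}=T_i^{-1}(\bar x)$ on the appropriate subalgebra, which rewrites $\overline{E_{\beta_k}^{(n)}}$ as $(T_{i_1}^{-1}\cdots T_{i_{k-1}}^{-1})(E_{i_k}^{(n)})$; one then reexpands the result onto the PBW basis attached to $\mathbf i$ via the triangular change-of-basis formulas between PBW bases associated to braid-equivalent reduced words, a rank-two phenomenon. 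The delicate step is to verify that this cumulative change of basis is $\le_{\mathbf i}$-triangular with leading term $E_{\beta_k}^{(n)}$; once settled, combining it with the multi-support induction completes the proof.
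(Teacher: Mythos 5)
Your multi-support case is exactly the paper's argument: split $\mathbf n=\mathbf n_1+\mathbf n_2$, use that the bar involution is an algebra automorphism, and feed the product through Lemma~\ref{le:ConvPBW}. The diagonal coefficient bookkeeping via the strictness clause is also correct.

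The gap is precisely where you flagged it, the single-support case, but the route you sketch (compatibility of bar with $T_{i}$, then a $\leq_{\mathbf i}$-triangular change of basis between PBW bases for braid-equivalent reduced words) is not what the paper does, is substantially heavier, and is left unfinished. The paper's resolution is short and exploits two things you did not bring in. First, if $\mathbf n$ has a single nonzero entry, then $\mathbf n$ is the $\leq_{\mathbf i}$-\emph{minimal} element of its weight in $\mathbb N^N$: for $\ell<k$ the defining inequalities are trivial because $\langle\gamma_\ell,\beta_t\rangle\geq0$ for $t\leq\ell$, and for $\ell\geq k$ they follow by subtracting $\langle\gamma_\ell,|\mathbf n|\rangle=\langle\gamma_\ell,|\mathbf m|\rangle$ and using $\langle\gamma_\ell,\beta_t\rangle\leq0$ for $t>\ell$. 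Second, the canonical basis element $b_{\mathbf i}(\mathbf n)$ defined by~\eqref{eq:TransCanPBW} is bar-invariant by construction; since every $\mathbf m$ with $\zeta_{\mathbf m}^{\mathbf n}\neq0$ has the same weight as $\mathbf n$ and hence satisfies $\mathbf m\geq_{\mathbf i}\mathbf n$ automatically by minimality, one can isolate $\overline{E_{\mathbf i}^{(\mathbf n)}}$ from $\overline{b_{\mathbf i}(\mathbf n)}=b_{\mathbf i}(\mathbf n)$ and obtain
$$\overline{E_{\mathbf i}^{(\mathbf n)}}=E_{\mathbf i}^{(\mathbf n)}+\sum_{\mathbf m>_{\mathbf i}\mathbf n}\Bigl(\zeta_{\mathbf m}^{\mathbf n}E_{\mathbf i}^{(\mathbf m)}-\overline{\zeta_{\mathbf m}^{\mathbf n}E_{\mathbf i}^{(\mathbf m)}}\Bigr).$$
Each such $\mathbf m$ has at least two nonzero entries (a strict $>_{\mathbf i}$ at fixed weight rules out single support), so the multi-support case applies to $\overline{E_{\mathbf i}^{(\mathbf m)}}$ and yields only PBW monomials $\geq_{\mathbf i}\mathbf m>_{\mathbf i}\mathbf n$. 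This finishes the proof without any braid-group bookkeeping; it also shows that the right induction parameter is the weight, with a two-stage argument (multi-support first, then single-support) at each weight, rather than the coordinate sum you proposed.
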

\begin{proof}
Let $\mathbf n\in\mathbb N^N$. Suppose first that $\mathbf n$ has
several nonzero entries. We can then find $k\in\{1,\ldots,N-1\}$ such
that one of the first $k$ entries and one of the last $N-k$ entries
of $\mathbf n$ is nonzero. Writing $\mathbf n=\mathbf n_1+\mathbf n_2$
with $\mathbf n_1\in\mathbb N^k\times\{0\}^{N-k}$ and
$\mathbf n_2\in\{0\}^k\times\mathbb N^{N-k}$, we obviously have
$E_{\mathbf i}^{(\mathbf n)}=E_{\mathbf i}^{(\mathbf n_1)}
E_{\mathbf i}^{(\mathbf n_2)}$ and $\overline{E_{\mathbf i}^{(\mathbf
n)}}=\overline{E_{\mathbf i}^{(\mathbf n_1)}}\,\overline{E_{\mathbf
i}^{(\mathbf n_2)}}$. Using Lemma~\ref{le:ConvPBW}, we can
then easily conclude by induction on $|\mathbf n|$.

Now suppose that $\mathbf n$ has a single nonzero entry. Then
$\mathbf n$ is the smallest element of its weight in $\mathbb N^N$.
We can thus write
$$b_{\mathbf i}(\mathbf n)=E_{\mathbf i}^{(\mathbf n)}
+\sum_{\mathbf m>_{\mathbf i}\mathbf n}
\zeta_{\mathbf m}^{\mathbf n}E_{\mathbf i}^{(\mathbf m)},$$
and therefore
$$\overline{E_{\mathbf i}^{(\mathbf n)}}=
E_{\mathbf i}^{(\mathbf n)}+\sum_{\mathbf m>_{\mathbf i}\mathbf n}
\Bigl(\zeta_{\mathbf m}^{\mathbf n}E_{\mathbf i}^{(\mathbf m)}
-\overline{\zeta_{\mathbf m}^{\mathbf n}
E_{\mathbf i}^{(\mathbf m)}}\Bigr).$$
The first case of our reasoning shows that only monomials
$E_{\mathbf i}^{(\mathbf p)}$ with
$\mathbf p\geq_{\mathbf i}\mathbf m>_{\mathbf i}\mathbf n$ appear
in the expansion of $\overline{E_{\mathbf i}^{(\mathbf m)}}$,
which concludes the proof.
\end{proof}

The transition matrix $(\zeta_{\mathbf m}^{\mathbf n})$ between the
PBW basis and the canonical basis can be computed from the matrix
$(\omega_{\mathbf m}^{\mathbf n})$ by the Kazhdan-Lusztig algorithm
(\cite{Lusztig90}, Section~7.11). Proposition~\ref{pr:TriangBar}
then implies:
\begin{corollary}
\label{co:TriangPBWCan}
The coefficients of the transition matrix between the PBW basis and
the canonical basis satisfy
$$\zeta_{\mathbf m}^{\mathbf n}\neq0\ \Longrightarrow\
\mathbf m\geq_{\mathbf i}\mathbf n.$$
\end{corollary}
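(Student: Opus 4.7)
The plan is to leverage Proposition~\ref{pr:TriangBar} together with the bar-invariant characterization of the canonical basis, running the Kazhdan-Lusztig algorithm in terms of the partial order $\leq_{\mathbf i}$ rather than the lexicographic order used by Lusztig in~\cite{Lusztig90}. First I would fix a weight $\nu\in Q_+$ and work in the finite-dimensional space $\mathbf f_\nu$, which is spanned by the PBW monomials $E_{\mathbf i}^{(\mathbf m)}$ with $|\mathbf m|=\nu$. Restricted to this set, the relation $\leq_{\mathbf i}$ is a finite partial order, which allows me to proceed by downward induction on $\mathbf n$: the base case consists of the $\leq_{\mathbf i}$-maximal elements, for which Proposition~\ref{pr:TriangBar} forces $\overline{E_{\mathbf i}^{(\mathbf n)}}=E_{\mathbf i}^{(\mathbf n)}$, so $b_{\mathbf i}(\mathbf n)=E_{\mathbf i}^{(\mathbf n)}$ satisfies the claim trivially.

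For the inductive step, assume the result has been established for all $\mathbf m>_{\mathbf i}\mathbf n$ of the same weight. Proposition~\ref{pr:TriangBar} gives
$$\overline{E_{\mathbf i}^{(\mathbf n)}}=E_{\mathbf i}^{(\mathbf n)}+\sum_{\mathbf m>_{\mathbf i}\mathbf n}\omega_{\mathbf m}^{\mathbf n}\,E_{\mathbf i}^{(\mathbf m)},$$
so the correction terms needed to build a bar-invariant element from $E_{\mathbf i}^{(\mathbf n)}$ are supported on $\{\mathbf m>_{\mathbf i}\mathbf n\}$. The Kazhdan-Lusztig algorithm (\cite{Lusztig90}, Section~7.11) produces $b_{\mathbf i}(\mathbf n)$ as $E_{\mathbf i}^{(\mathbf n)}$ plus a $\mathbb Q(v)$-linear combination of the previously-constructed $b_{\mathbf i}(\mathbf m)$ with $\mathbf m>_{\mathbf i}\mathbf n$, the coefficients being uniquely chosen so that the off-diagonal PBW-coefficients land in $v^{-1}\mathbb Z[v^{-1}]$. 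By the induction hypothesis, each such $b_{\mathbf i}(\mathbf m)$ is supported on $\{\mathbf p\geq_{\mathbf i}\mathbf m\}\subseteq\{\mathbf p>_{\mathbf i}\mathbf n\}$, so $b_{\mathbf i}(\mathbf n)$ itself is supported on $\{\mathbf p\geq_{\mathbf i}\mathbf n\}$. This is precisely the implication $\zeta_{\mathbf m}^{\mathbf n}\neq0\Rightarrow\mathbf m\geq_{\mathbf i}\mathbf n$.

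The only mild obstacle is verifying that the recursion is well-defined and terminates; this is automatic because within the fixed weight $\nu$ the set of PBW indices is finite, so the downward induction on the finite poset $(\{\mathbf m:|\mathbf m|=\nu\},\leq_{\mathbf i})$ closes after finitely many steps. Conceptually the entire content sits in Proposition~\ref{pr:TriangBar}: once triangularity of the bar-involution matrix $(\omega_{\mathbf m}^{\mathbf n})$ with respect to $\leq_{\mathbf i}$ is known, triangularity of the transition matrix $(\zeta_{\mathbf m}^{\mathbf n})$ is a formal consequence of the defining properties of $b_{\mathbf i}(\mathbf n)$.
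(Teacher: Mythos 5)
Your proposal is correct and follows essentially the same route as the paper, whose proof of this corollary is precisely the observation that the Kazhdan-Lusztig algorithm (\cite{Lusztig90}, Section~7.11) computes $(\zeta_{\mathbf m}^{\mathbf n})$ from $(\omega_{\mathbf m}^{\mathbf n})$, so that the $\leq_{\mathbf i}$-triangularity of Proposition~\ref{pr:TriangBar} propagates to the transition matrix. Your downward induction on the finite poset of PBW indices of fixed weight is just the explicit unpacking of that algorithm (with the identification of the constructed element with $b_{\mathbf i}(\mathbf n)$ resting, as in the paper, on the uniqueness of the bar-invariant element with coefficients in $v^{-1}\mathbb Z[v^{-1}]$ off the diagonal).
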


\section{Further examples}
\label{se:FurEx}
We now consider the case where the generalized Cartan matrix $A$ is
symmetric. Using representations of quivers, Lusztig constructed
two bases of canonical type of $\mathbf f$, which he called the
canonical and the semicanonical bases. In this section, we study
what information the orders $\leq_{\str}$ and $\leq_{\pol}$
convey about these constructions. Our main results are
Proposition~\ref{pr:PolOrderDeg} and Theorem~\ref{th:CompCanSemican}.

\subsection{Background on Lusztig's constructions}
\label{ss:BackLuszCons}
We adopt the notation of \cite{Lusztig91}. The Dynkin diagram is a
graph with vertex set $I$; between two vertices $i$ and $j$, there
are $-a_{i,j}$ edges. Since the graph is without loops, each edge
has two endpoints. Orienting an edge is recognizing one of these
endpoints as the tail and the other one as the head. We denote by
$H$ the set of oriented edges; the tail of $h\in H$ is denoted by
$h'$ and its head by $h''$. In addition, there is a fixed point
free involution $h\mapsto\overline h$ that exchanges tails and
heads. An orientation of our graph is a subset $\Omega\subseteq H$
such that $(\Omega,\overline\Omega)$ is a partition of $H$.

A dimension-vector is a function $\nu\in\mathbb N^I$; we identify
such a $\nu$ with the weight $\sum_{i\in I}\nu(i)\alpha_i$ in $Q_+$.
Given a dimension-vector $\nu$, we denote by $S_\nu$ the set of all
pairs consisting of a sequence $\mathbf i=(i_1,\ldots,i_m)$ of
elements of $I$ and a sequence $\mathbf a=(a_1,\ldots a_m)$ of
natural numbers such that $\nu=\sum_{k=1}^ma_k\alpha_{i_k}$.
Each $(\mathbf i,\mathbf a)\in S_\nu$ defines an element
$\Theta_{\mathbf i}^{(\mathbf a)}=\theta_{i_1}^{(a_1)}\cdots
\theta_{i_m}^{(a_m)}$ in $\mathbf f_\nu$.

Let $\mathbf k$ be an algebraically closed field. To an $I$-graded
$\mathbf k$-vector space $\mathbf V=\bigoplus_{i\in I}\mathbf V_i$,
we associate its dimension-vector $\nu:i\mapsto\dim\mathbf V_i$.
Given $\mathbf V$, let
$G_{\mathbf V}=\prod_{i\in I}\GL(\mathbf V_i)$ and let
$$\mathbf E_{\mathbf V}=\bigoplus_{h\in H}\Hom(\mathbf
V_{h'},\mathbf V_{h''}).$$
The group $G_{\mathbf V}$ acts on the vector space $\mathbf E_{\mathbf V}$
in a natural fashion. An element $x=(x_h)$ in $\mathbf E_{\mathbf V}$
is said to be nilpotent if there exists an $N\geq1$ such that
the composition $x_{h_N}\cdots x_{h_1}:\mathbf V_{h'_1}\to\mathbf
V_{h''_N}$ is zero for each path $(h_1,\ldots,h_N)$ in the oriented
graph. Lastly, given an orientation $\Omega$, let
$\mathbf E_{\mathbf V,\Omega}$ be the subspace of $\mathbf
E_{\mathbf V}$ consisting of all vectors $(x_h)$ such that $x_h=0$
whenever $h\in H\setminus\Omega$.

Fix an $I$-graded $\mathbf k$-vector space $\mathbf V$ of
dimension-vector $\nu$ and an orientation $\Omega$. Let
$(\mathbf i,\mathbf a)\in S_\nu$. By definition, a flag of type
$(\mathbf i,\mathbf a)$ is a decreasing filtration $\mathbf V=
\mathbf V^0\supseteq\mathbf V^1\supseteq\cdots\supseteq\mathbf V^m=0$
of $I$-graded vector spaces such that $\mathbf V^{k-1}/\mathbf V^k$
has dimension vector $a_k\alpha_{i_k}$. Let $\mathcal F_{\mathbf
i,\mathbf a}$ be the set of all flags of type $(\mathbf i,\mathbf a)$
and let $\widetilde{\mathcal F}_{\mathbf i,\mathbf a}$ be the
set of all pairs $(x,\mathbf V^\bullet)\in\mathbf E_{\mathbf V,\Omega}
\times\mathcal F_{\mathbf i,\mathbf a}$ such that each $\mathbf V^k$
is stable by the action of $x$. Let $\pi_{\mathbf i,\mathbf a}:
\widetilde{\mathcal F}_{\mathbf i,\mathbf a}\to\mathbf
E_{\mathbf V,\Omega}$ be the first projection and set
$L_{\mathbf i,\mathbf a;\Omega}=(\pi_{\mathbf i,\mathbf a})_!1$,
where $1$ is the trivial local system on
$\widetilde{\mathcal F}_{\mathbf i,\mathbf a}$. By the Decomposition
Theorem, $L_{\mathbf i,\mathbf a;\Omega}$ is a semisimple complex.

Let $\mathcal P_{\mathbf V,\Omega}$ be the set of isomorphism classes
of simple perverse sheaves $L$ such that $L[d]$ appears as a direct
summand of the sheaf $L_{\mathbf i,\mathbf a;\Omega}$, for some
$(\mathbf i,\mathbf a)\in S_\nu$ and some $d\in\mathbb Z$.
Let $\mathcal Q_{\mathbf V,\Omega}$ be the smallest full subcategory
of the category of bounded complexes of constructible sheaves on
$\mathbf E_{\mathbf V,\Omega}$ that contains the sheaves
$L_{\mathbf i,\mathbf a;\Omega}$ and that is stable by direct sums,
direct summands, and shifts. Lastly, let
$\mathcal K_{\mathbf V,\Omega}$ be the abelian group with one
generator $(L)$ for each isomorphism class of object in
$\mathcal Q_{\mathbf V,\Omega}$, and with relations $(L[1])=(L)$ and
$(L)=(L')+(L'')$ whenever $L$ is isomorphic to $L'\oplus L''$. (Thus
our $\mathcal K_{\mathbf V,\Omega}$ is the specialization at $v=1$ of
the $\mathcal K_{\mathbf V,\Omega}$ defined in Section~10.1
of~\cite{Lusztig91}.)

Given $\nu\in Q_+$, the groups $\mathcal K_{\mathbf V,\Omega}$, for
$\mathbf V$ of dimension-vector $\nu$, can be canonically identified.
We thus obtain an abelian group $\mathcal K_{\nu,\Omega}$ equipped
with isomorphisms $\mathcal K_{\nu,\Omega}\cong\mathcal K_{\mathbf
V,\Omega}$ for any $\mathbf V$ of dimension-vector $\nu$. With
this notation, $\mathcal P_{\mathbf V,\Omega}$ gives rise to a
$\mathbb Z$-basis of $\mathcal K_{\nu,\Omega}$ which does not depend
on $\mathbf V$. We set $\mathcal K_\Omega=\bigoplus_{\nu\in Q_+}
\mathcal K_{\nu,\Omega}$; this is a free $\mathbb Z$-module endowed
with a canonical basis.

We endow $\mathcal K_\Omega$ with the structure of an associative
$Q_+$-graded algebra and we define an isomorphism of algebras
$\lambda_\Omega:\mathbf f\to\mathcal K_\Omega$ as in Sections~10.2 and
10.16 of~\cite{Lusztig91}. Then $\lambda_\Omega\bigl(\Theta_{\mathbf
i}^{(\mathbf a)}\bigr)= (L_{\mathbf i,\mathbf a;\Omega})$ for each
$(\mathbf i,\mathbf a)\in S_\nu$.

By Theorem~10.17 in~\cite{Lusztig91}, the inverse image of the
canonical basis of $\mathcal K_\Omega$ by $\lambda_\Omega$ does
not depend on $\Omega$. This inverse image is called the canonical
basis of $\mathbf f$; it is a basis of canonical type. By
Section~\ref{ss:CrysBInfinity}, there is thus a unique isomorphism
of crystals from $B(-\infty)$ onto the canonical basis; following
Kashiwara, we denote this isomorphism by $G$. In the sequel, given
$b\in B(-\infty)$ and $\mathbf V$ of dimension-vector $\wt(b)$, we
denote by $L_{b,\Omega}$ the element in $\mathcal P_{\mathbf V,\Omega}$
such that $(L_{b,\Omega})=\lambda_\Omega(G(b))$ in
$\mathcal K_{\mathbf V,\Omega}$.

For an orientation $\Omega$ and for $h\in H$,
we set $\varepsilon_\Omega(h)=1$ if $h\in\Omega$ and
$\varepsilon_\Omega(h)=-1$ otherwise.
Given an $I$-graded $\mathbf k$-vector space $\mathbf V$, let
$\Lambda_{\mathbf V,\Omega}$ be the set of all nilpotent elements
$x=(x_h)$ in $\mathbf E_{\mathbf V}$ such that for each $i\in I$,
$$\sum_{\substack{h\in H\\h''=i}}\varepsilon_\Omega(h)
x_hx_{\overline h}=0.$$
This set $\Lambda_{\mathbf V,\Omega}$ is called the nilpotent variety.

Up to a canonical bijection, the set of irreducible components of
$\Lambda_{\mathbf V,\Omega}$ depends only on the dimension-vector
$\nu$ of $\mathbf V$, and not on $\mathbf V$ or $\Omega$
(\cite{Lusztig91}, Sections~12.14 and~12.15). We denote this set
by $\mathbf Z_\nu$ and we set $\mathbf Z=\bigsqcup_{\nu\in Q_+}\mathbf
Z_\nu$. Then $\mathbf Z$ can be endowed with the structure of a crystal
isomorphic to $B(-\infty)$ (\cite{KashiwaraSaito97}, Theorem~5.3.2).
Given $b\in B(-\infty)$ and $\mathbf V$ of dimension-vector $\wt(b)$,
we denote by $\Lambda_{b,\Omega}$ the irreducible component of
$\Lambda_{\mathbf V,\Omega}$ that corresponds to $b$.

Up to a canonical isomorphism, the space of $\mathbb Q$-valued,
$G_{\mathbf V}$-invariant, constructible fonctions on
$\Lambda_{\mathbf V,\Omega}$ depends only on the dimension-vector
$\nu$ of $\mathbf V$; we denote it by $\widetilde M(\nu)$.
In Section~12 of~\cite{Lusztig91}, Lusztig endows
$\widetilde M=\bigoplus_{\nu\in Q_+}\widetilde M(\nu)$ with
the structure of an algebra and constructs an injective
homomorphism $\kappa:\mathbf f\to\widetilde M$ (this morphism
is denoted by $\gamma$ in \cite{Lusztig91} and by $\kappa$
in~\cite{Lusztig00}). The map $\kappa$ is defined so that
for each $(\mathbf i,\mathbf a)\in S_\nu$, the value of
$\kappa\bigl(\Theta_{\mathbf i}^{(\mathbf a)}\bigr)$ at a
point $x\in\Lambda_{\mathbf V,\Omega}$ is the Euler
characteristic of the set of all $x$-stable flags of type
$(\mathbf i,\mathbf a)$ in $\mathbf V$.

Each irreducible component $X$ of $\Lambda_{\mathbf V,\Omega}$
contains a dense open subset $X_0$ such that any function in
$\kappa(\mathbf f_\nu)$ is constant on $X_0$. We denote by
$\delta_X:\mathbf f_\nu\to\mathbb Q$ the linear form obtained
by composing $\kappa$ with the evaluation at a point of $X_0$.
By Section~12.14 in \cite{Lusztig91} or by Theorem~2.7
in~\cite{Lusztig00}, the elements $\delta_X$, for
$X\in\mathbf Z_\nu$, form a basis of the dual of
$\mathbf f_\nu$. Gathering the corresponding dual bases in
$\mathbf f_\nu$ for all $\nu\in Q_+$, we get a basis of $\mathbf f$.
This is the semicanonical basis, and it is of canonical
type. There is thus a unique isomorphism of crystals from
$B(-\infty)$ onto the semicanonical basis; we denote this
isomorphism by $S$. Comparing the constructions
in~\cite{KashiwaraSaito97} and in~\cite{Lusztig00}, one
checks that for any $b\in B(-\infty)$, the dual vector to
$S(b)$ is the $\delta_X$ with $X=\Lambda_{b,\Omega}$. In
other words, the indexations of the semicanonical basis and
of $\mathbf Z$ by $B(-\infty)$ agree.

\subsection{Condition for singular supports}
\label{ss:SingSupp}
In the context of Section~\ref{ss:BackLuszCons}, the trace duality
allows us to regard $\mathbf E_{\mathbf V}$ as the cotangent space
of $\mathbf E_{\mathbf V,\Omega}$. Lusztig proved (\cite{Lusztig91},
Corollary~13.6) that the singular support of a complex in
$\mathcal Q_{\mathbf V,\Omega}$ is the union of irreducible
components of $\Lambda_{\mathbf V,\Omega}$. In this context,
Kashiwara and Saito (\cite{KashiwaraSaito97}, Lemma~8.2.1) proved that
$$\Lambda_{b'',\Omega}\subseteq SS(L_{b',\Omega})\ \Longrightarrow\
b'\leq_{\str}b''.$$

\subsection{Degeneracy order between quiver representations}
\label{ss:DegOrder}
We continue with the case where $A$ is a symmetric Cartan matrix
and assume in addition that $A$ is of finite type. We fix $\nu\in Q_+$
and an $I$-graded $\mathbf k$-vector space $\mathbf V$ of
dimension-vector $\nu$. For each orientation $\Omega$ and each element
$b\in B(-\infty)$ of weight $\nu$, there is a $G_{\mathbf V}$-orbit
$\mathscr O_{b,\Omega}\subseteq\mathbf E_{\mathbf V,\Omega}$
such that the perverse sheaf $L_{b,\Omega}$ is the intersection
cohomology sheaf on $\overline{\mathscr O_{b,\Omega}}$ w.r.t.\ the
trivial local system on $\mathscr O_{b,\Omega}$.

\begin{proposition}
\label{pr:PolOrderDeg}
Let $b'$ and $b''$ be two elements of weight $\nu$ in $B(-\infty)$.
If $b'\leq_{\pol}b''$, then
$\overline{\mathscr O_{b',\Omega}}\supseteq\mathscr O_{b'',\Omega}$
for all orientations $\Omega$.
\end{proposition}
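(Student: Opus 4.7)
The plan is to use Proposition~\ref{pr:PolOrderPBWOrder} for a word $\mathbf i$ adapted to $\Omega$, so as to reduce the geometric statement to a purely representation-theoretic comparison of orbits in the Dynkin quiver $(I,\Omega)$.

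Since $\Omega$ is acyclic, pick a reduced expression $\mathbf i=(i_1,\ldots,i_N)\in\mathscr X$ adapted to $\Omega$, meaning $i_k$ is a sink of the quiver obtained from $\Omega$ by reflecting at $i_1,\ldots,i_{k-1}$. For such $\mathbf i$, a classical result of Gabriel and Ringel identifies each $\beta_k$ with the dimension-vector of an indecomposable $\Omega$-module, say $R_k$; every $\Omega$-representation of dimension $\nu$ decomposes uniquely as $\bigoplus_k R_k^{n_k}$, so the assignment $\mathbf n\mapsto\bigoplus_k R_k^{n_k}$ parameterizes the $G_{\mathbf V}$-orbits in $\mathbf E_{\mathbf V,\Omega}$ by $\mathbb N^N$. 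Under Lusztig's geometric construction of the canonical basis (which specializes Ringel's Hall algebra picture), this bijection matches Lusztig data: $\mathscr O_{b,\Omega}$ is the orbit of $\bigoplus_k R_k^{n_k}$ where $(n_1,\ldots,n_N)=\mathbf n_{\mathbf i}(b)$. Applying Proposition~\ref{pr:PolOrderPBWOrder} to this $\mathbf i$, the hypothesis $b'\leq_{\pol}b''$ becomes the combinatorial inequality $\mathbf n_{\mathbf i}(b')\leq_{\mathbf i}\mathbf n_{\mathbf i}(b'')$.

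It remains to prove the purely quiver-theoretic statement: for an adapted word $\mathbf i$, if $\mathbf n'\leq_{\mathbf i}\mathbf n''$ then $\bigoplus_k R_k^{n''_k}$ lies in the orbit closure of $\bigoplus_k R_k^{n'_k}$. I would deduce this from Bongartz's theorem, which says that over the Dynkin algebra $\mathbf k\Omega$ a module $N$ is a degeneration of $M$ iff $\dim\Hom(X,M)\leq\dim\Hom(X,N)$ for every indecomposable $X$. Since $\dim\Hom(X,\bigoplus R_k^{n_k})$ is linear in $\mathbf n$, it suffices to exhibit each Bongartz inequality as a consequence of the defining inequalities of $\leq_{\mathbf i}$. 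The bridge is the identity from the proof of Proposition~\ref{pr:PolOrderPBWOrder} that rewrites each partial sum $\sum_{t=1}^k\langle\gamma_k,\beta_t\rangle n_t$ as a BZ-datum coordinate of $b$ plus a weight term, together with an interpretation (for adapted $\mathbf i$) of those BZ coordinates as dimensions of $\Hom$-spaces attached to specific indecomposable $\Omega$-modules; as $\mathbf i$ ranges over reduced expressions adapted to $\Omega$, every indecomposable of $\mathbf k\Omega$ is captured this way.

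The main obstacle is this last matching: producing a clean dictionary between the chamber-coweight partial sums arising from adapted words and the $\dim\Hom$-quantities that feed Bongartz's criterion. Once such a dictionary is in place, the proposition follows immediately by combining Proposition~\ref{pr:PolOrderPBWOrder} with Bongartz's theorem.
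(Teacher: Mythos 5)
You follow the paper's route --- choose $\mathbf i\in\mathscr X$ adapted to $\Omega$, identify $\mathscr O_{b,\Omega}$ with the orbit of $\bigoplus_k M(\beta_k)^{\oplus n_k}$ for $\mathbf n=\mathbf n_{\mathbf i}(b)$, translate $b'\leq_{\pol}b''$ into $\mathbf n_{\mathbf i}(b')\leq_{\mathbf i}\mathbf n_{\mathbf i}(b'')$ via Proposition~\ref{pr:PolOrderPBWOrder}, and conclude with a degeneration criterion --- but the step you yourself call ``the main obstacle'' is precisely the mathematical content of the proposition, and you do not supply it, so the argument is incomplete at its crux. The missing dictionary is short: Ringel's formulas give $\dim\Hom_{\mathbf kQ}(M(\alpha),M(\beta))=\max\bigl(0,\langle\alpha,\beta\rangle_\Omega\bigr)$ for the Euler form of $Q=(I,\Omega)$, and for $\mathbf i$ adapted to $\Omega$ one has the identity $\langle\gamma_k,\cdot\rangle=\langle\cdot,\beta_k\rangle_\Omega$ (Proposition~7.4 in~\cite{BaumannKamnitzer12}). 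Hence $\dim\Hom_{\mathbf kQ}(M(\beta_t),M(\beta_k))=\max\bigl(0,\langle\gamma_k,\beta_t\rangle\bigr)$, which equals $\langle\gamma_k,\beta_t\rangle$ for $t\leq k$ and $0$ for $t>k$, so for the module $M$ attached to $\mathbf n$ the partial sum $\sum_{t=1}^k\langle\gamma_k,\beta_t\rangle n_t$ is exactly $\dim\Hom_{\mathbf kQ}(M,M(\beta_k))$. No BZ-datum rewriting is needed, and there is no need to vary the adapted word: a single adapted $\mathbf i$ already makes $(\beta_k)$ run through all positive roots, hence through all indecomposables. (Be careful with your proposed BZ detour: the interpretation of $M_\gamma(b)$ as a Hom-dimension in~\cite{BaumannKamnitzer12} involves modules over the preprojective algebra, not over $\mathbf kQ$, so it does not feed directly into a degeneration criterion for $\mathbf kQ$-modules.)

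A second point to fix is the direction of the Hom functor. The computation above yields the inequalities $\dim\Hom_{\mathbf kQ}(M',M(\beta_k))\leq\dim\Hom_{\mathbf kQ}(M'',M(\beta_k))$, i.e.\ Hom \emph{into} the indecomposables, whereas you quote Bongartz's criterion with the indecomposable in the first argument. For Dynkin quivers both versions characterize degeneration, but you must either invoke the version matching your inequalities (this is how the paper concludes, citing~\cite{Riedtmann86,Bongartz95}) or explicitly cite the equivalence of the two; as written the two halves of your argument do not connect.
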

\begin{proof}
Fix an orientation $\Omega$. For $\mu$ and $\nu$ in $Q_+$, define
$$\langle\mu,\nu\rangle_\Omega=\sum_{i\in I}\mu_i\nu_i-\sum_{h\in\Omega}
\mu_{h'}\nu_{h''}.$$
The oriented graph $Q=(I,\Omega)$ is a Dynkin quiver. Given a
positive root $\alpha$, we denote by $M(\alpha)$ the
indecomposable $\mathbf kQ$-module of dimension-vector $\alpha$.
Ringel (\cite{Ringel96}, p.~59) has shown that
\begin{equation}
\label{eq:RingelFormulas}
\begin{aligned}
\dim\Hom_{\mathbf kQ}(M(\alpha),M(\beta))&=\max\bigl(0,
\langle\alpha,\beta\rangle_\Omega\bigr),\\[2pt]
\dim\Ext^1_{\mathbf kQ}(M(\alpha),M(\beta))&=\max\bigl(0,
-\langle\alpha,\beta\rangle_\Omega\bigr).
\end{aligned}
\end{equation}

Choose $\mathbf i\in\mathscr X$ adapted to $\Omega$
(\cite{Lusztig90}, Section~4.7 and Proposition~4.12~(b)). As
in Section~\ref{ss:LusztigData}, the word $\mathbf i$ defines a
sequence $(\beta_k)$ of positive roots and a sequence $(\gamma_k)$
of chamber coweights. By Proposition~7.4 in~\cite{BaumannKamnitzer12},
we have, for any $k\in\{1,\ldots,N\}$,
$$\langle\gamma_k,?\rangle=\langle\beta_k,?\rangle_{\overline\Omega}
=\langle?,\beta_k\rangle_\Omega.$$
It follows that for any $k$ and $\ell$ in $\{1,\ldots,N\}$,
we have
$$\dim\Hom_{\mathbf kQ}(M(\beta_\ell),M(\beta_k))=
\max\bigl(0,\langle\gamma_k,\beta_\ell\rangle\bigr)=
\begin{cases}
\langle\gamma_k,\beta_l\rangle&\text{if }k\geq\ell,\\
0&\text{if }k<\ell.
\end{cases}$$

For $b\in B(-\infty)$ of weight $\nu$, the $G_{\mathbf V}$-orbit
$\mathscr O_{b,\Omega}$ is the set of all
$x\in\mathbf E_{\mathbf V,\Omega}$ such that
$$(\mathbf V,x)\cong M(\beta_1)^{\oplus n_1}\oplus\cdots\oplus
M(\beta_N)^{\oplus n_N}$$
as $\mathbf kQ$-modules, where $(n_1,\ldots,n_N)$ is the Lusztig
datum of $b$ in direction $\mathbf i$ (\cite{Lusztig90},
Sections~4.15--4.16).

Assume that $b'\leq_{\pol}b''$. We then have
$\mathbf n_{\mathbf i}(b')\leq_{\mathbf i}\mathbf n_{\mathbf i}(b'')$.
This inequality is equivalent to the fact that for each
$k\in\{1,\ldots,N\}$,
$$\dim\Hom_{\mathbf kQ}((\mathbf V,x'),M(\beta_k))\leq
\dim\Hom_{\mathbf kQ}((\mathbf V,x''),M(\beta_k)),$$
where $x'\in\mathscr O_{b',\Omega}$ and $x''\in\mathscr O_{b'',\Omega}$.
The inclusion $\overline{\mathscr O_{b',\Omega}}\supseteq
\mathscr O_{b'',\Omega}$ now follows from Riedtmann's
criterion~\cite{Riedtmann86,Bongartz95}.
\end{proof}

\begin{other}{Remark}
\label{rk:DegOrder}
\begin{enumerate}
\item
\label{it:DOa}
The converse statement to Proposition~\ref{pr:PolOrderDeg} is true
in type $A$. This comes from the fact that in this case, any chamber
coweight can be written as $s_{i_1}\cdots s_{i_k}\omega_{i_k}^\vee$,
where $\mathbf i\in\mathscr X$ is compatible with an orientation
(see the proof of Proposition~A.2 in~\cite{BaumannKamnitzerTingley11}).
\item
\label{it:DOb}
Let us go back to the problem studied in Section~\ref{ss:SingSupp}.
Let $b'$ and $b''$ in $B(-\infty)$ have the same weight. Given an
orientation $\Omega$, it is known that $\Lambda_{b'',\Omega}$ is
the closure of the conormal bundle to $\mathcal O_{b'',\Omega}$
(see \cite{Kimura07}, Section~5.3 or \cite{BaumannKamnitzer12},
Section~7.3). Therefore, in order that
$\Lambda_{b'',\Omega}\subseteq SS(L_{b',\Omega})$, it is necessary that
$\mathcal O_{b'',\Omega}\subseteq\overline{\mathcal O_{b',\Omega}}$.
Since the condition $\Lambda_{b'',\Omega}\subseteq SS(L_{b',\Omega})$
does not depend on $\Omega$ (\cite{KashiwaraSaito97}, Theorem~6.2.1),
we must in fact have
$\overline{\mathcal O_{b',\Omega}}\supseteq\mathcal O_{b'',\Omega}$
for all orientations $\Omega$. In type $A$, this means that
$b'\leq_{\pol}b''$ by the previous remark. We do not know if this
result extends to the other types.
\end{enumerate}
\end{other}

\subsection{Comparison between the canonical and the semicanonical bases}
\label{ss:CompCanSemican}
We continue to assume that the Cartan matrix $A$ is symmetric and of
finite type. As both the canonical and the semicanonical bases are of
canonical type, the transition matrix between them is lower unitriangular
w.r.t.\ the order $\leq_{\str}$ (see Proposition~\ref{pr:StrOrder}). Our
aim now is to compare these bases w.r.t.\ the order $\leq_{\pol}$.

Our method is to use a PBW basis as an intermediary. Conditions on
the transition matrix between the canonical basis and a PBW basis
were obtained in Corollary~\ref{co:TriangPBWCan}. We now focus on
the transition matrix between the semicanonical basis and a PBW basis.

\begin{lemma}
\label{le:TriangPBWSemican}
Let $\mathbf i\in\mathscr X$, $\mathbf n\in\mathbb N^N$ and
$b\in B(-\infty)$. If $S(b)$ appears with a nonzero coefficient
in the expansion of $E_{\mathbf i}^{(\mathbf n)}$ on the
semicanonical basis, then
$\mathbf n_{\mathbf i}(b)\geq_{\mathbf i}\mathbf n$.
\end{lemma}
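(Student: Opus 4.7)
The plan is to interpret the coefficient of $S(b)$ in $E_{\mathbf i}^{(\mathbf n)}$ as the value $\delta_{\Lambda_{b,\Omega}}(E_{\mathbf i}^{(\mathbf n)}|_{v=1})$ of Lusztig's constructible function $\kappa(E_{\mathbf i}^{(\mathbf n)}|_{v=1})$ at a generic point $x$ of $\Lambda_{b,\Omega}$, and to show that this vanishes whenever $\mathbf n_{\mathbf i}(b)\not\geq_{\mathbf i}\mathbf n$. The forms $\delta_{\Lambda_{b,\Omega}}$ do not depend on $\Omega$ (they are dual to the $\Omega$-independent semicanonical basis), so one is free to choose $\Omega$; I would first treat the case where $\mathbf i$ is adapted to $\Omega$ and then reduce the general case to it by tracking how the statement behaves under elementary braid moves on reduced words, using the combinatorial description of Lusztig-datum reparameterization together with Proposition~\ref{pr:PolOrderPBWOrder}.

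For $\mathbf i$ adapted to $\Omega$, a generic $x\in\Lambda_{b,\Omega}$ sits above a generic point of $\mathscr O_{b,\Omega}$ under the projection $\mathbf E_{\mathbf V}\to\mathbf E_{\mathbf V,\Omega}$ (Remark~\ref{rk:DegOrder}\ref{it:DOb}), and its underlying $\mathbf kQ$-module is $M_x\cong\bigoplus_{k=1}^NM(\beta_k)^{(\mathbf n_{\mathbf i}(b))_k}$ by the Ringel-Reineke identification recalled in the proof of Proposition~\ref{pr:PolOrderDeg}. My aim is then to identify $\kappa(E_{\mathbf i}^{(\mathbf n)}|_{v=1})(x)$ with an Euler characteristic of a quiver Grassmannian of $M_x$ parameterizing filtrations whose $k$-th graded piece is $M(\beta_k)^{n_k}$. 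The existence of such a filtration is equivalent, by a Hom-argument paralleling the Ringel formulas cited in the proof of Proposition~\ref{pr:PolOrderDeg}, to the inequality $\mathbf n\leq_{\mathbf i}\mathbf n_{\mathbf i}(b)$; in Dynkin type these quiver Grassmannians admit cell decompositions, so nonemptiness forces a strictly positive Euler characteristic, giving the desired vanishing in the complementary case.

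The main obstacle is the identification of $\kappa(E_{\mathbf i}^{(\mathbf n)}|_{v=1})(x)$ with that Euler characteristic. The map $\kappa$ is directly defined via Euler characteristics of $x$-stable flags of type $(\mathbf j,\mathbf a)$, so it matches elementary monomials $\Theta_{\mathbf j}^{(\mathbf a)}$, but $E_{\mathbf i}^{(\mathbf n)}$ is built from Lusztig's braid-twisted root vectors and therefore requires additional work. Either one decomposes $E_{\mathbf i}^{(\mathbf n)}|_{v=1}$ into Chevalley generators and carefully sums the resulting Euler-characteristic convolutions, or one appeals to the quiver-Grassmannian description of adapted PBW elements available from Geiss-Leclerc-Schr\"oer's work on the semicanonical basis and the preprojective algebra; in either case this identification, and not the subsequent combinatorial deduction, is the technical core of the lemma.
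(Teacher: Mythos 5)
Your opening step (reading the coefficient of $S(b)$ as $\delta_x\bigl(E_{\mathbf i}^{(\mathbf n)}\bigr)=\kappa\bigl(E_{\mathbf i}^{(\mathbf n)}\bigr)(x)$ at a general point $x$ of $\Lambda_{b,\Omega}$) is the same as the paper's, but the rest of your plan has two genuine gaps. First, the reduction from an arbitrary $\mathbf i\in\mathscr X$ to a word adapted to an orientation is not available by the means you cite: most reduced words are adapted to no orientation, the order $\leq_{\mathbf i}$ changes with the word, and under a braid move a single monomial $E_{\mathbf i}^{(\mathbf n)}$ expands into a combination of monomials $E_{\mathbf i'}^{(\mathbf m)}$, so you would need a triangularity statement for that expansion compatible with the piecewise-linear reparameterization maps; Proposition~\ref{pr:PolOrderPBWOrder} compares Lusztig data of crystal elements across words, not expansions of PBW monomials, and supplies no such reduction (note also that the lemma for a fixed $\mathbf i$ only involves $\leq_{\mathbf i}$, not $\leq_{\pol}$, so the statement for adapted words does not bootstrap to all words through that proposition). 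Second, the identification of $\kappa\bigl(E_{\mathbf i}^{(\mathbf n)}\bigr)(x)$ with the Euler characteristic of a quiver Grassmannian of the $\mathbf kQ$-restriction $M_x$ is precisely what you leave unproved, and as formulated it is also doubtful, because $\kappa$ counts flags stable under the whole preprojective-algebra action of $x$, not only under the arrows of $\Omega$. (The cell-decomposition/positivity remark is beside the point: for the vanishing you only need the easy direction, namely that existence of a filtration forces the inequality.)

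The paper's proof avoids both difficulties and works uniformly in $\mathbf i$. Since each factor $(T_{i_1}\cdots T_{i_{k-1}})\bigl(E_{i_k}^{(n_k)}\bigr)$ has weight $n_k\beta_k$ and the product in $\widetilde M$ is a convolution over $x$-stable subspaces, nonvanishing of $\delta_x\bigl(E_{\mathbf i}^{(\mathbf n)}\bigr)$ already forces the existence of an $x$-stable $I$-graded filtration of $\mathbf V$ with subquotient dimension-vectors $n_1\beta_1,\ldots,n_N\beta_N$; no explicit description of $\kappa$ of the braid-twisted root vectors is needed. The inequalities defining $\mathbf n_{\mathbf i}(b)\geq_{\mathbf i}\mathbf n$ are then read off from the preprojective-algebra description of the BZ datum: $M_\gamma(b)=\dim\Hom_\Pi(N(\gamma),(\mathbf V,x))$ for general $x$ in $\Lambda_{b,\Omega}$, and since the $k$-th step of the filtration is a $\Pi$-submodule of $(\mathbf V,x)$, one gets $M_{w\omega_{i_k}^\vee}(b)\geq\dim\Hom_\Pi\bigl(N(w\omega_{i_k}^\vee),(\mathbf V_k,x)\bigr)\geq\langle w\omega_{i_k}^\vee,\nu_k\rangle$ with $\nu_k=\wt(b)-\sum_{t=1}^kn_t\beta_t$, which is exactly the required system of inequalities. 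To salvage your route you would have to both establish the Grassmannian identification for adapted words and prove a braid-move triangularity for PBW expansions (in the spirit of Lemma~\ref{le:ConvPBW}, but for transitions between PBW bases), which together amount to more work than the lemma itself.
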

\begin{proof}
We can of course assume that $E_{\mathbf i}^{(\mathbf n)}$ and
$b$ have the same weight; call it $\nu$. As before, $\mathbf i$
defines a sequence $(\beta_k)$ of positive roots and a sequence
$(\gamma_k)$ of chamber coweights.

The coefficient of $S(b)$ in the expansion of an element
$u\in\mathbf f_\nu$ on the semicanonical basis is equal to
$\delta_x(u)=\kappa(u)(x)$, where $x$ is a general point in
$\Lambda_{b,\Omega}$. By definition of the algebra structure on
$\widetilde M$, if this number is nonzero for
$u=E_{\mathbf i}^{(\mathbf n)}$, then there is a filtration
$$\mathbf V=\mathbf V_0\supseteq\mathbf V_1\supseteq\cdots\supseteq
\mathbf V_{N-1}\supseteq\mathbf V_N=0$$
by $x$-stable $I$-graded vector spaces such that the dimension-vector
of $\mathbf V_{k-1}/\mathbf V_k$ is $n_k\beta_k$.

Theorem~6.3 in~\cite{BaumannKamnitzer12} tells us how to extract
the BZ datum of $b$ from the pair $(\mathbf V,x)$, viewed as a
representation of the preprojective algebra $\Pi$. Specifically,
if we introduce the $\Pi$-modules $N(\gamma)$ as in Section~3.4 of
\cite{BaumannKamnitzer12}, then
$M_\gamma(b)=\dim\Hom_\Pi(N(\gamma),(\mathbf V,x))$.

Let $k\in\{1,\ldots,N\}$ and set $w=s_{i_1}\cdots s_{i_k}$.
Observing that $(\mathbf V_k,x)$ is a $\Pi$-submodule of
$(\mathbf V,x)$ and using Proposition~4.3
in~\cite{BaumannKamnitzer12}, we get
$$\langle w\omega_{i_k}^\vee,\mu_w(b)\rangle
=M_{w\omega_{i_k}^\vee}(b)
\geq\dim\Hom_\Pi(N(w\omega_{i_k}^\vee),(\mathbf V_k,x))
\geq\langle w\omega_{i_k}^\vee,\nu_k\rangle,$$
where $\nu_k$ is the dimension-vector of $\mathbf V_k$.
Substituting $\nu_k=\wt(b)-\sum_{t=1}^kn_t\beta_t$ and
$w\omega_{i_k}^\vee=-\gamma_k$, we get exactly the
inequality asked for in the definition of
$\mathbf n_{\mathbf i}(b)\geq_{\mathbf i}\mathbf n$.
\end{proof}

\begin{theorem}
\label{th:CompCanSemican}
The transition matrix between the canonical and the semicanonical
bases is lower unitriangular w.r.t.\ the order $\leq_{\pol}$.
\end{theorem}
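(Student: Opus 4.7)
The plan is to use a PBW basis as an intermediary between the canonical and the semicanonical bases, exactly as announced at the start of Section~\ref{ss:CompCanSemican}. Fix any $\mathbf i\in\mathscr X$. Corollary~\ref{co:TriangPBWCan} controls the passage from the PBW basis to the canonical basis, and Lemma~\ref{le:TriangPBWSemican} controls the passage from the PBW basis to the semicanonical basis; composing these two expansions will give triangularity of $G\to S$ with respect to $\leq_{\mathbf i}$, and Proposition~\ref{pr:PolOrderPBWOrder} then translates this statement, for all $\mathbf i\in\mathscr X$, into a statement about $\leq_{\pol}$.

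Concretely, fix $b'\in B(-\infty)$. Using~\eqref{eq:TransCanPBW}, expand
\[
G(b')=b_{\mathbf i}(\mathbf n_{\mathbf i}(b'))=\sum_{\mathbf m\in\mathbb N^N}\zeta_{\mathbf m}^{\mathbf n_{\mathbf i}(b')}\,E_{\mathbf i}^{(\mathbf m)},
\]
and expand each PBW monomial on the semicanonical basis,
$E_{\mathbf i}^{(\mathbf m)}=\sum_{b''\in B(-\infty)}d_{b''}^{\mathbf m}\,S(b'')$. Suppose the resulting coefficient of $S(b'')$ in $G(b')$ is nonzero. Then there exists $\mathbf m$ with both $\zeta_{\mathbf m}^{\mathbf n_{\mathbf i}(b')}\neq 0$ and $d_{b''}^{\mathbf m}\neq 0$. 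Corollary~\ref{co:TriangPBWCan} gives $\mathbf m\geq_{\mathbf i}\mathbf n_{\mathbf i}(b')$, and Lemma~\ref{le:TriangPBWSemican} gives $\mathbf n_{\mathbf i}(b'')\geq_{\mathbf i}\mathbf m$, whence $\mathbf n_{\mathbf i}(b'')\geq_{\mathbf i}\mathbf n_{\mathbf i}(b')$ by transitivity. As $\mathbf i\in\mathscr X$ was arbitrary, Proposition~\ref{pr:PolOrderPBWOrder} yields $b'\leq_{\pol}b''$, which is exactly the lower-triangularity assertion.

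The argument above does not pin down the diagonal entries, but the unitriangularity is free: since both $G(B(-\infty))$ and $S(B(-\infty))$ are bases of canonical type, Proposition~\ref{pr:StrOrder}~\ref{it:SOb} already says that their transition matrix is lower unitriangular with respect to the finer order $\leq_{\str}$, so in particular each diagonal entry equals $1$.

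The substance of the proof is really packaged in Corollary~\ref{co:TriangPBWCan} and Lemma~\ref{le:TriangPBWSemican} --- the latter being the more delicate ingredient, as it required extracting the BZ datum of $b$ from a generic point of $\Lambda_{b,\Omega}$ viewed as a preprojective-algebra module. Once those two triangularity facts are in hand, the theorem is just an assembly step, and the only thing to watch is that $\zeta_{\mathbf m}^{\mathbf n_{\mathbf i}(b')}\neq 0$ and $d_{b''}^{\mathbf m}\neq 0$ push the intermediate PBW index in compatible directions with respect to $\leq_{\mathbf i}$ so that the composition remains triangular.
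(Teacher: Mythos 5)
Your proof is correct and follows the paper's own argument: the paper likewise combines Corollary~\ref{co:TriangPBWCan}, Lemma~\ref{le:TriangPBWSemican} and Proposition~\ref{pr:PolOrderPBWOrder} for lower triangularity w.r.t.\ $\leq_{\pol}$, and invokes Proposition~\ref{pr:StrOrder}~\ref{it:SOb} for the diagonal entries. You have merely written out explicitly the assembly step that the paper compresses into ``readily imply''.
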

\begin{proof}
Proposition~\ref{pr:PolOrderPBWOrder}, Corollary~\ref{co:TriangPBWCan}
and Lemma~\ref{le:TriangPBWSemican} readily imply that the transition
matrix is lower triangular w.r.t.\ the order $\leq_{\pol}$. In
addition, Proposition~\ref{pr:StrOrder}~\ref{it:SOb} guarantees that
the diagonal coefficients are equal to $1$.
\end{proof}

\begin{other}{Remark}
\label{rk:CompCanSemican}
The proof of Proposition~\ref{pr:StrOrder} relies on the observation
that the moves $\approx$ preserve the coefficients of the transition
matrix between two bases of canonical type. Using
Theorem~\ref{th:CompCanSemican}, we then see that the transition
matrix between the canonical and the semicanonical bases is also
lower triangular w.r.t.\ the order $\leq$ defined in
Remark~\ref{rk:StabPolOrder}, and obtained by ``stabilizing''
$\leq_{\pol}$ under the moves $\approx$. In addition, this
transition matrix is also invariant under Saito's crystal reflections
(see~\cite{Baumann11}, equation~(3)); we can thus weaken again our
order by introducing a further move. (The author borrowed this idea
from Kashiwara and Saito, see Lemma~8.2.2 in~\cite{KashiwaraSaito97}.)
\end{other}

\section{A study in types $A_5$ and $D_4$}
\label{se:StudA5D4}
In~\cite{KashiwaraSaito97}, Kashiwara and Saito discovered a situation
where $\Lambda_{b'',\Omega}\subseteq SS(L_{b',\Omega})$ for two
elements $b'\neq b''$ of $B(-\infty)$, in the notation of Section
\ref{ss:SingSupp}. In \cite{GeissLeclercSchroer05}, Geiss, Leclerc
and Schr\"oer made a more detailed investigation of this situation,
and observed that for these elements, the element $S(b'')$ do occur
in the expansion of the canonical basis element $G(b')$ on the
semicanonical basis. By Proposition~\ref{pr:StrOrder}~\ref{it:SOb}
and Theorem~\ref{th:CompCanSemican}, it follows that
$b'\leq_{\str}b''$ and $b'\leq_{\pol}b''$.

Geiss, Leclerc and Schr\"oer explain that these phenomena are related
to the fact that the algebra $\mathbb Q[N]$ has a tame cluster type,
namely $E_8^{(1,1)}$, and that the counterexample is located precisely at
one of the imaginary indecomposable roots of this elliptic root system.
Leclerc explained to the author that the other imaginary root gives
rise to a similar counterexample, and that the story can be repeated
word for word in type $D_4$.

Our aim here is to add a small piece to the almost complete description
of the situation given in~\cite{GeissLeclercSchroer05}: we will
compute explicitly the elements $G(b')$ and $G(b'')$ above, in a way
that will allow us in the next section to compute the image of
$G(b')$ by the quantum Frobenius map and the quantum Frobenius
splitting.

\subsection{Statement of the results}
\label{ss:StatRes}
Our results can be stated in an uniform way for four situations,
numbered (I)--(IV). In each case, and for each $p\geq1$, we define
a product $\widetilde E_p$ of crystal operators and elements $\xi_p$
and $\eta_p$ in $\mathbf f$, as in the following table. The first two
cases are in type $A_5$, with the vertices of the Dynkin diagram
sequentially numbered. The last two cases are in type $D_4$, where
the index of the central node is $2$.

\begin{center}
\begin{tabular}{ll}
\toprule
I&Type $A_5$\\[4pt]
&$\widetilde E_p=(\tilde e_2\tilde e_4)^p\;
(\tilde e_1\tilde e_3^2\tilde e_5)^p\;
(\tilde e_2\tilde e_4)^p$\\[4pt]
&$\xi_p=\bigl(\theta_2^{(p)}\theta_4^{(p)}\bigr)\;
\bigl(\theta_1^{(p)}\theta_3^{(2p)}\theta_5^{(p)}\bigr)\;
\bigl(\theta_2^{(p)}\theta_4^{(p)}\bigr)$\\[4pt]
&$\eta_p=\bigl(\theta_2^{(p)}\theta_4^{(p)}\bigr)\;
\bigl(\theta_1^{(p)}\theta_3^{(2p)}\theta_5^{(p)}\bigr)\;
\bigl(\theta_2^{(2p)}\theta_4^{(2p)}\bigr)\;
\bigl(\theta_1^{(p)}\theta_3^{(2p)}\theta_5^{(p)}\bigr)\;
\bigl(\theta_2^{(p)}\theta_4^{(p)}\bigr)$\\[4pt]
\midrule
II&Type $A_5$\\[4pt]
&$\widetilde E_p=(\tilde e_1\tilde e_3^2\tilde e_5)^p\;
(\tilde e_2\tilde e_4)^{3p}\;
(\tilde e_1\tilde e_3^2\tilde e_5)^p$\\[4pt]
&$\xi_p=\bigl(\theta_1^{(p)}\theta_3^{(2p)}\theta_5^{(p)}\bigr)\;
\bigl(\theta_2^{(3p)}\theta_4^{(3p)}\bigr)\;
\bigl(\theta_1^{(p)}\theta_3^{(2p)}\theta_5^{(p)}\bigr)$\\[4pt]
&$\eta_p=\bigl(\theta_1^{(p)}\theta_3^{(2p)}\theta_5^{(p)}\bigr)\;
\bigl(\theta_2^{(3p)}\theta_4^{(3p)}\bigr)\;
\bigl(\theta_1^{(2p)}\theta_3^{(4p)}\theta_5^{(2p)}\bigr)\;
\bigl(\theta_2^{(3p)}\theta_4^{(3p)}\bigr)\;
\bigl(\theta_1^{(p)}\theta_3^{(2p)}\theta_5^{(p)}\bigr)$\\[4pt]
\midrule
III&Type $D_4$\\[4pt]
&$\widetilde E_p=\tilde e_2^p\;
(\tilde e_1\tilde e_3\tilde e_4)^p\;
\tilde e_2^p$\\[4pt]
&$\xi_p=\theta_2^{(p)}\;
\bigl(\theta_1^{(p)}\theta_3^{(p)}\theta_4^{(p)}\bigr)\;
\theta_2^{(p)}$\\[4pt]
&$\eta_p=\theta_2^{(p)}\;
\bigl(\theta_1^{(p)}\theta_3^{(p)}\theta_4^{(p)}\bigr)\;
\theta_2^{(2p)}\;
\bigl(\theta_1^{(p)}\theta_3^{(p)}\theta_4^{(p)}\bigr)\;
\theta_2^{(p)}$\\[4pt]
\midrule
IV&Type $D_4$\\[4pt]
&$\widetilde E_p=(\tilde e_1\tilde e_3\tilde e_4)^p\;
\tilde e_2^{3p}\;
(\tilde e_1\tilde e_3\tilde e_4)^p$\\[4pt]
&$\xi_p=\bigl(\theta_1^{(p)}\theta_3^{(p)}\theta_4^{(p)}\bigr)\;
\theta_2^{(3p)}\;
\bigl(\theta_1^{(p)}\theta_3^{(p)}\theta_4^{(p)}\bigr)$\\[4pt]
&$\eta_p=\bigl(\theta_1^{(p)}\theta_3^{(p)}\theta_4^{(p)}\bigr)\;
\theta_2^{(3p)}\;
\bigl(\theta_1^{(2p)}\theta_3^{(2p)}\theta_4^{(2p)}\bigr)\;
\theta_2^{(3p)}\;
\bigl(\theta_1^{(p)}\theta_3^{(p)}\theta_4^{(p)}\bigr)$\\[4pt]
\bottomrule
\end{tabular}
\end{center}

For each $(r,s)\in\mathbb N^2$, we set
$b_{r,s}=\widetilde E_{r+s}\widetilde E_r1$; this is an element
in $B(-\infty)$.

\begin{proposition}
\label{pr:SmallA5D4}
Let $r\in\mathbb N$. Then $b_{r,0}$ is maximal in $B(-\infty)$
w.r.t.\ the order $\leq_{\str}$. Further, if $\mathbf B$ is a basis
of canonical type of $\mathbf f$, then $\xi_r$ is the element of
$\mathbf B$ indexed by $b_{r,0}$ in the bijection
$B(-\infty)\cong\mathbf B$.
\end{proposition}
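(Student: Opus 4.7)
My plan is to prove both assertions simultaneously, the main engine being Remark~\ref{rk:StringCone}. For each of the cases (I)--(IV), I would read the definition of $\xi_r$ as a product of divided powers $\theta_{i_1}^{(n_1)}\theta_{i_2}^{(n_2)}\cdots$ from left to right, obtaining a sequence $\mathbf{i}_0=(i_1,i_2,\ldots)$ together with a multiplicity vector $\mathbf{n}_0$ such that $\xi_r=\Theta_{\mathbf{i}_0}^{(\mathbf{n}_0)}$. In case~I, for instance, $\mathbf{i}_0=(2,4,1,3,5,2,4)$ and $\mathbf{n}_0=(r,r,r,2r,r,r,r)$. Extending $\mathbf{i}_0$ to an infinite sequence $\mathbf{i}$ in which each $i\in I$ occurs infinitely often, and padding $\mathbf{n}_0$ by zeros, one still has $\xi_r=\Theta_{\mathbf{i}}^{(\mathbf{n})}$.

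The first step is to check the equality $\Phi_{\mathbf{i}}(b_{r,0})=\mathbf{n}$. The basic input is the Kashiwara commutation $\tilde e_i\tilde e_j=\tilde e_j\tilde e_i$ on $B(-\infty)$ whenever $a_{i,j}=0$, which follows from the Serre relations. In case~I this allows rewriting $\widetilde{E}_r\cdot 1 = \tilde e_2^r\tilde e_4^r(\tilde e_1^r\tilde e_3^{2r}\tilde e_5^r)\tilde e_2^r\tilde e_4^r\cdot 1$; one then iteratively applies $\tilde f_{i_k}^{\max}$, using at each step a weight argument to see that the remaining element lies outside $\theta_{i_{k+1}}\mathbf{f}$ so that the next factor is peeled off cleanly.

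By Remark~\ref{rk:StringCone}, one obtains in any basis $\mathbf{B}$ of canonical type the expansion
\[
\xi_r \;=\; b_{r,0} \;+\; \sum_{b\,:\,\Phi_{\mathbf{i}}(b)>_{\lex}\mathbf{n}} c_b\,b.
\]
To eliminate the correction sum I would exploit two symmetries. First, the pairwise commuting divided-power factors of $\xi_r$ (for example, $\theta_2^{(r)}\theta_4^{(r)}$ and $\theta_1^{(r)}\theta_3^{(2r)}\theta_5^{(r)}$ in case~I) give several equivalent expressions $\xi_r=\Theta_{\mathbf{i}'}^{(\mathbf{n}')}$, each of which yields, via another application of Remark~\ref{rk:StringCone}, a further lex-constraint $\Phi_{\mathbf{i}'}(b)\geq_{\lex}\mathbf{n}'$ on any surviving $b$. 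Second, $\xi_r$ is palindromic and therefore $\sigma$-invariant, so $c_b=c_{\sigma(b)}$; reversing the sequences $\mathbf{i}'$ gives still more constraints. Combining these with the weight constraint $\wt(b)=\wt(\xi_r)$, only $b=b_{r,0}$ should survive, giving $\xi_r=b_{r,0}$.

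Finally, the $\leq_{\str}$-maximality of $b_{r,0}$ follows: any $b''$ with $b_{r,0}<_{\str} b''$ would have $\Phi_{\mathbf{i}'}(b'')\geq_{\lex}\Phi_{\mathbf{i}'}(b_{r,0})$ for every sequence $\mathbf{i}'$ considered above (by the argument used for antisymmetry of $\leq_{\str}$ in the proof of Proposition~\ref{pr:StrOrder}), contradicting the uniqueness just established. The main obstacle is this uniqueness step: verifying that the conjunction of the lex-constraints coming from all reorderings of $\xi_r$ with the $\sigma$-constraint really singles out $b_{r,0}$ among elements of weight $\wt(\xi_r)$. In the ranks at hand this is amenable to direct computation; the authors' acknowledgement of GAP checks suggests that a case-by-case verification, perhaps complemented by the explicit description of the string cones in types $A_5$ and $D_4$, is the engine of the proof.
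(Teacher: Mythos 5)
Your strategy is close in spirit to the paper's: you expand $\xi_r$ on a basis of canonical type via Remark~\ref{rk:StringCone} and then try to kill the correction terms using string-type inequalities together with the $\sigma$-symmetry of $\xi_r$; the paper's own proof works with exactly the same data, namely the constraints $\wt(b)=\wt(b_{r,0})$, $\varphi_i(b)\geq\varphi_i(b_{r,0})$ and $\varphi_i(\sigma(b))\geq\varphi_i(\sigma(b_{r,0}))$ (see \eqref{eq:Br0Maximal}), and it also derives both assertions from the statement that these constraints force $b=b_{r,0}$. The genuine gap is precisely the step you flag as the main obstacle: you do not prove this uniqueness, and the fallback you propose (case-by-case verification with GAP, by analogy with the authors' computer checks) cannot close it, because the Proposition is asserted for every $r\in\mathbb N$ and the weight $r\nu$ grows with $r$, so a finite machine check only treats finitely many $r$. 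What the paper supplies at this point is a uniform argument: setting $t=\varphi_2(b)$, $u=\varphi_2(\sigma(b))$ and $b'=\sigma\bigl(\tilde f_2^{\max}\sigma(b)\bigr)$, the Kashiwara--Saito formula (their Proposition~5.3.1) gives $t=\max\bigl(\varphi_2(b'),r-u\bigr)$, and comparing with the $\alpha_2$-coordinate of $\wt(b')$ forces $t=u=r$; the same holds at vertex $4$, and after stripping $\tilde f_2^r\tilde f_4^r$ twice (interleaved with $\sigma$) one lands on the unique crystal element of weight $r(\alpha_1+2\alpha_3+\alpha_5)$, whence $b=b_{r,0}$. Some argument of this kind, valid for all $r$ simultaneously, is indispensable; without it both the maximality and the identification of $\xi_r$ remain unproved.

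Two further remarks. First, your opening step $\Phi_{\mathbf i}(b_{r,0})=\mathbf n$ is not a pure weight argument: the $\alpha_2$-coordinate of $\wt(b_{r,0})$ is $2r$, so weight considerations alone only give $\varphi_2(b_{r,0})\leq 2r$, and one must actually show $\varphi_2(b_{r,0})=r$ (the paper does this by computing the Lusztig data of $b_{r,0}$ with respect to two reduced words in Section~\ref{ss:ProofProp}, reconfirmed by the preprojective-algebra description in Section~\ref{ss:ProofTh(i)}). Second, if the elimination step were established, your route through Remark~\ref{rk:StringCone} would identify $\xi_r$ with the element indexed by $b_{r,0}$ in \emph{any} basis of canonical type directly, which is a legitimate alternative to the paper's two-stage argument (maximality plus Proposition~\ref{pr:StrOrder}~\ref{it:SOb} to reduce to one basis, then the computation of the constructible function $\kappa(\xi_r)$ on the nilpotent variety to show $\xi_r=S(b_{r,0})$); but as written the proposal defers the decisive point rather than proving it.
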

Proposition~\ref{pr:SmallA5D4} is proved in Section
\ref{ss:ProofProp}.

Recall that $G(b_{r,s})$ and $S(b_{r,s})$ denote the elements
indexed by $b_{r,s}$ in the canonical and semicanonical bases
of $\mathbf f$. Proposition~\ref{pr:SmallA5D4} tells us that
$\xi_p=G(b_{p,0})=S(b_{p,0})$. We now look for similar expansions
of $\eta_p$ on the two bases, canonical and semicanonical.

\begin{theorem}
\label{th:MainA5D4}
\begin{enumerate}
\item
\label{it:MADa}
Let $(r',s',r'',s'')\in\mathbb N^4$. Then
$$b_{r',s'}\leq_{\str}b_{r'',s''}\ \Longleftrightarrow\
b_{r',s'}\leq_{\pol}b_{r'',s''}\ \Longleftrightarrow\
\bigl(r'+2s'=r''+2s''\;\text{ and }\;r'\leq r''\bigr).$$
\item
\label{it:MADb}
For each $p\in\mathbb N$,
$$\eta_p=G(b_{0,p})+G(b_{2,p-1})+G(b_{4,p-2})+\cdots+G(b_{2p,0}).$$
\item
\label{it:MADc}
For each $(r,s)\in\mathbb N^2$,
$$\langle S(b_{2r,s})^*,\eta_{r+s}\rangle=\binom{2r}r.$$
\end{enumerate}
\end{theorem}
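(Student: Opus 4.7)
My plan is to handle the three parts in order, with (i) providing combinatorial infrastructure for (ii), and (iii) drawing on the semicanonical geometry; part (iii) looks the hardest.

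For part (i), I would first determine $\wt(b_{r,s})$ explicitly from the definition $b_{r,s} = \widetilde E_{r+s}\widetilde E_r 1$, observing that in each of the four cases the weight is a fixed scalar multiple of $r+2s$ times an imaginary-root-like weight $\delta$; this already accounts for the equal-weight part of the condition. Fixing the weight, I would compute the Lusztig datum $\mathbf n_{\mathbf i}(b_{r,s})$ for a reduced word $\mathbf i \in \mathscr X$ chosen so that the outer factor of $\widetilde E_{r+s}$ aligns with its initial segment; then the leading entries of $\mathbf n_{\mathbf i}(b_{r,s})$ encode $r+s$ and subsequent entries encode $r$. Proposition \ref{pr:PolOrderPBWOrder} then reduces the polytope comparison to coordinate-wise inequalities of partial sums, which collapse to $r' \leq r''$. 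For the string order, I would use Proposition \ref{pr:SmallA5D4} together with explicit chains of moves $\approx$, starting from $b_{r,s}$ and successively trading two units of $r$ against one unit of $s$ by applying $\tilde e_i$'s and using $\sigma$; transitivity then produces the full chain. The reverse implication is automatic since $\leq_{\str}$ implies lex comparability of all string parametrizations, already computed during the $\leq_{\pol}$ analysis.

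For part (ii), I would exploit that $\eta_p$ equals $\Theta_{\mathbf j}^{(\mathbf m)}$ for an explicit sequence $\mathbf j$ and multiplicity vector $\mathbf m$ read directly from the definition of $\eta_p$. Remark \ref{rk:StringCone} then tells us that $\langle b^*, \eta_p\rangle = 1$ whenever $\Phi_{\mathbf j}(b) = \mathbf m$ and vanishes whenever $\Phi_{\mathbf j}(b) <_{\lex} \mathbf m$. The proof therefore splits into two tasks: first, check that $\Phi_{\mathbf j}(b_{2k, p-k}) = \mathbf m$ for every $0 \leq k \leq p$, which is a direct unwinding of the crystal operator product $\widetilde E_{r+s}\widetilde E_r$ once the palindromic structure of $\mathbf j$ is matched against the double application of $\widetilde E$; second, show that no other $b \in B(-\infty)$ of weight $\wt(\eta_p)$ contributes. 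For the second task I would use the $\sigma$-invariance of $\eta_p$ (clear from its palindromic form), which restricts the expansion to $\sigma$-fixed elements, combined with string parametrizations in additional well-chosen directions; intersecting the several lex-triangularity constraints with the combinatorial description of part (i) should pin the support down to exactly the chain of $b_{2k, p-k}$.

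For part (iii), I would invoke the Lusztig/Kashiwara-Saito realization of the semicanonical basis: the pairing $\langle S(b_{2r,s})^*, \eta_{r+s}\rangle$ equals the value of $\kappa(\eta_{r+s})$ at a general point $x$ of $\Lambda_{b_{2r,s}, \Omega}$, which is the Euler characteristic of the variety of $x$-stable flags in $\mathbf V$ of the type dictated by the divided-power presentation of $\eta_{r+s}$. For the generic preprojective algebra module representing $b_{2r, s}$ (whose structure is explicit in the Geiss--Leclerc--Schr\"oer analysis of the Kashiwara-Saito example), I would stratify this flag variety according to how the middle divided-power block of $\eta_{r+s}$ splits across the boundary between the inner and outer copies of $\widetilde E$, and compute the Euler characteristic cell by cell. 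The main obstacle, and the most delicate step of the whole proof, is carrying out this Euler characteristic computation cleanly; the clean appearance of $\binom{2r}{r}$ strongly suggests a bijection with subsets of size $r$ in a $2r$-element set, pointing toward a specific combinatorial mechanism for how $r$ of the $2r$ middle-block operators are ``assigned'' to the lower half of the flag. Verifying that the same mechanism produces the same binomial coefficient uniformly across all four low-rank situations is the heart of the argument.
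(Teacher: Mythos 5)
Your plan for part (ii) rests on a mechanism that cannot work. First, the datum $\bigl(\Phi_{\mathbf j}(b),\widetilde F_{\mathbf j}(b)\bigr)$ determines $b$, so at most one element of weight $2p\nu$ can have string parametrization equal to the exponent vector $\mathbf m$ of $\eta_p$; your claim that $\Phi_{\mathbf j}(b_{2k,p-k})=\mathbf m$ for \emph{every} $k$ is false --- already $\varphi_2(b_{2k,p-k})=p+k$, whereas the first entry of $\mathbf m$ is $p$ (case I), so for $k>0$ the string data differ in the first coordinate. Second, Remark~\ref{rk:StringCone} (which moreover only applies when the exponent vector lies in the string cone $\mathcal C_{\mathbf i}$) and any lex-triangularity statement in other directions only force \emph{vanishing} of coefficients of elements whose string datum is not $\geq_{\lex}$ the exponent vector; they say nothing about the values of the coefficients of the elements lying above, and those are precisely the ones at stake (one must show they equal $1$ for the $b_{2k,p-k}$ and $0$ for everything else). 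Third, $\sigma$-invariance of $\eta_p$ only implies that the coefficient function on the canonical basis is $\sigma$-symmetric, not that non-$\sigma$-fixed basis vectors are absent from the expansion. The paper's proof of (ii) requires genuinely geometric input with no combinatorial surrogate in your proposal: it realizes $\eta_p$ as $(L_{\mathbf j,\mathbf a;\Omega})$, proves that $\pi_{\mathbf j,\mathbf a}$ is semismall for the orbit stratification with the relevant strata exactly those of Lusztig data $2(p-s)\mathbf y+s\mathbf z$ (Proposition~\ref{pr:SemiSmallness}, whose proof is a long explicit positivity estimate resting on the classification of extensions by $S_2^{\oplus p}\oplus S_4^{\oplus p}$ in Corollary~\ref{co:ExtS2\&S4}), and then invokes the Decomposition Theorem for semismall maps to get multiplicity one on each relevant stratum.

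Parts (i) and (iii) also have gaps, though less structural. For the ``$\Leftarrow$'' direction of the $\leq_{\pol}$ equivalence, Proposition~\ref{pr:PolOrderPBWOrder} demands $\mathbf n_{\mathbf i}(b')\leq_{\mathbf i}\mathbf n_{\mathbf i}(b'')$ for \emph{all} $\mathbf i\in\mathscr X$: one well-chosen reduced word controls only the $N=15$ chamber coweights it produces, out of $62$ in type $A_5$, so your single-direction computation proves nothing in that direction. The paper instead uses the Minkowski-sum identity $\Pol(b_{r,s})=r\Pol(b_{1,0})+s\Pol(b_{0,1})$ together with the single inclusion $\Pol(b_{0,1})\subset\Pol(b_{2,0})$ (itself deduced from part (iii) via Theorem~\ref{th:CompCanSemican}, or by direct computation). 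Likewise, $\leq_{\str}$ quantifies over \emph{all} finite sequences of moves $\approx$, so exhibiting particular chains establishes nothing; one must bound $\varphi_i$ on every reachable pair, which the paper does by computing $\varphi_i$ on all elements $\tilde e_1^{t_1}\tilde e_3^{t_3}\tilde e_5^{t_5}b_{r,s}$ via the preprojective-algebra description ($Q_{\lambda_1}\oplus\cdots\oplus Q_{\lambda_r}\oplus P^{\oplus s}$ plus simples) and the $\sigma$-fixedness of $b_{r,s}$. For (iii) your strategy coincides with the paper's (evaluate $\kappa(\eta_{r+s})$ at a general point of $\Lambda_{b_{2r,s},\Omega}$ and count stable flags), but the entire content --- the analysis of the maps $y$, $z$, $\overline y$, $\overline z$ on $P$ and $Q_\lambda$ showing the flag is forced on the $P$-summands and that on the $Q$-part it amounts to choosing $r$ of $2r$ eigenspaces, yielding $\binom{2r}r$ isolated flags --- is exactly the step you leave open.
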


The proof of Theorem~\ref{th:MainA5D4} occupies
Sections~\ref{ss:ProofTh(i)}--\ref{ss:ProofTh(ii)}.

\begin{other}{Remark}
\label{rk:CexKS}
Recall the notation of Section~\ref{ss:BackLuszCons}.
Let $\nu\in Q_+$, let $\Omega$ be an orientation, let $\mathbf V$
be an $I$-graded $\mathbf k$-vector space of dimension-vector $\nu$,
and let $(\mathbf i,\mathbf a)\in S_\nu$. By Theorem~2.2
in~\cite{Sabbah85}, the characteristic cycle of
$L_{\mathbf i,\mathbf a;\Omega}$ is the projection on
$T^*\mathbf E_{\mathbf V,\Omega}$ of the intersection in the
ambient space $T^*(\mathcal F_{\mathbf i,\mathbf a}\times\mathbf
E_{\mathbf V,\Omega})$ of $\mathcal F_{\mathbf i,\mathbf a}\times
T^*\mathbf E_{\mathbf V,\Omega}$ with the conormal bundle of
$\widetilde{\mathcal F}_{\mathbf i,\mathbf a}$. (In this recipe,
$\mathcal F_{\mathbf i,\mathbf a}$ is identified to the zero
section of $T^*\mathcal F_{\mathbf i,\mathbf a}$ and intersection
means the intersection product in homology or in algebraic geometry.)

Let $p\in\mathbb N$ and take $(\mathbf i,\mathbf a)$ such that
$\Theta_{\mathbf i}^{(\mathbf a)}=\eta_p$. Looking over a general
point of $\Lambda_{b_{2r,s}}$, where $r+s=p$, the intersection
is transversal and consists of $\binom{2r}r$ points.
(These are the same points as those occurring in the
proof of Theorem~\ref{th:MainA5D4}~\ref{it:MADc}.)
The multiplicity of $[\Lambda_{b_{2r,s}}]$ in the characteristic
cycle of $L_{\mathbf i,\mathbf a;\Omega}$ is therefore equal to
this binomial coefficient, up to a sign.

Take $p=1$. The multiplicity of $[\Lambda_{b_{2,0}}]$ in the
characteristic cycle of $L_{\mathbf i,\mathbf a;\Omega}$ is
thus $\pm2$. A similar calculation shows that the multiplicity
of $[\Lambda_{b_{2,0}}]$ in the characteristic cycle of
$L_{b_{2,0},\Omega}$ is equal to $\pm1$. In addition,
the proof of Theorem~\ref{th:MainA5D4}~\ref{it:MADb} gives
$L_{\mathbf i,\mathbf a;\Omega}=L_{b_{0,1},\Omega}\oplus
L_{b_{2,0},\Omega}$. We conclude that $\Lambda_{b_{2,0}}$ is
contained in the singular support of $L_{b_{0,1},\Omega}$.

The elements $b$ and $b'$ of Kashiwara and Saito
(\cite{KashiwaraSaito97}, Section~7.2) are our elements $b_{0,1}$
and $b_{2,0}$ in case~I. The arguments above thus provide a new
proof of Theorem~7.2.1 in~\cite{KashiwaraSaito97}.
\end{other}

\subsection{Proof of Proposition~\ref{pr:SmallA5D4}}
\label{ss:ProofProp}
Though each case requires its own set of calculations, we will
content ourselves with treating the case~I.

We fix $r\in\mathbb N$. Standard tools for crystal combinatorics
allow to compute the Lusztig data of $b_{r,0}$ w.r.t.\ the
reduced words
\begin{align*}
\mathbf i&=(2,4,1,3,5,2,4,1,3,5,2,4,1,3,5),\\
\mathbf j&=(1,3,5,2,4,1,3,5,2,4,1,3,5,2,4).
\end{align*}
One finds
\begin{align*}
\mathbf n_{\mathbf i}(b_{r,0})&=(r,r,0,0,0,0,0,0,0,0,r,r,0,0,0),\\
\mathbf n_{\mathbf j}(b_{r,0})&=(0,0,0,r,r,0,0,0,0,0,0,0,0,r,r).
\end{align*}
One deduces that
$$\varphi_i(b_{r,0})=\varphi_i(\sigma(b_{r,0}))=
\begin{cases}
r&\text{if $i\in\{2,4\}$,}\\
0&\text{if $i\in\{1,3,5\}$.}
\end{cases}$$

We claim that an element $b\in B(-\infty)$ such that
$\wt(b)=\wt(b_{r,0})$ and such that
\begin{equation}
\label{eq:Br0Maximal}
\varphi_i(b)\geq\varphi_i(b_{r,0})\quad\text{and}\quad
\varphi_i(\sigma(b))\geq\varphi_i(\sigma(b_{r,0}))
\end{equation}
for each $i\in\{1,\ldots,5\}$ is necessarily equal to $b_{r,0}$.

To prove this claim, pick such a $b$ and set $t=\varphi_2(b)$,
$u=\varphi_2(\sigma(b))$,
$b'=\sigma\bigl(\tilde f_2^{\max}\sigma(b)\bigr)$.
By Proposition~5.3.1~(1) in~\cite{KashiwaraSaito97}, we have
$$t=\max\bigl(\varphi_2(b'),u+\langle\alpha_2^\vee,\wt(b')\rangle\bigr)
=\max\bigl(\varphi_2(b'),r-u\bigr).$$
Since $t\geq r$ and $u\geq r$, this forces $\varphi_2(b')=t$.
Therefore, in its expansion on the basis of simple roots, the
$\alpha_2$-coordinate of $\wt(b')$ is thus at least $t$, so
$2r-u\geq t$. This forces $t=u=r$. By symmetry, we have
$\varphi_4(b)=\varphi_4(\sigma(b))=r$. Further, the crystal
operations at vertex $2$ commute with the crystal operations at vertex
$4$, so the element $b''=\sigma\bigl(\tilde f_4^r\sigma(b')\bigr)$
satisfies $\varphi_2(b'')=\varphi_4(b'')=r$.

Now $b'''=\tilde f_1^r\tilde f_3^{2r}\tilde f_5^r1$ is the unique
element in $B(-\infty)$ with weight $r(\alpha_1+2\alpha_3+\alpha_5)$.
We thus necessarily have $b'''=\tilde f_2^r\tilde f_4^rb''$,
and we conclude that $b=\sigma\circ\bigl(\tilde f_2^r\tilde f_4^r\bigr)
\circ\sigma\circ\bigl(\tilde f_2^r\tilde f_4^r\bigr)(b''')$.
This reasoning holds in particular for $b_{r,0}$, whence our
claim that $b=b_{r,0}$.

From this, it immediately follows that $b_{r,0}$ is maximal in
$B(-\infty)$ w.r.t.\ the order $\leq_{\str}$. Therefore, all bases
of canonical type share the same element at this spot, by
Proposition~\ref{pr:StrOrder}~\ref{it:SOb}. It remains to prove
that this element is $\xi_r$.

Let us adopt the setup of the definition of the semicanonical
basis. Pick an orientation of the Dynkin diagram and an $I$-graded
$\mathbf k$-vector space $\mathbf V$ of dimension-vector
$\wt(b_{r,0})$. Let $b\in B(-\infty)$. By construction (see
Section~\ref{ss:BackLuszCons}), the value of the constructible
function $\kappa(\xi_r)$ at the general point $x$ of
$\Lambda_{b,\Omega}$ is nonzero only if $\mathbf V$ contains
$x$-stable subspaces $\mathbf W'$ and $\mathbf W''$ such that
$\mathbf W'$ and $\mathbf V/\mathbf W''$ have dimension-vector
$r(\alpha_2+\alpha_4)$. This latter condition is equivalent to
\eqref{eq:Br0Maximal}, hence to $b=b_{r,0}$; when it is fulfilled,
$\mathbf W'$ and $\mathbf W''$ are unique and satisfy
$\mathbf W'\subseteq\mathbf W''$, which leads to
$\kappa(\xi_r)(x)=1$.

To sum up, $\langle S(b)^*,\xi_r\rangle$ is equal to $1$ if
$b=b_{r,0}$ and to $0$ otherwise. We conclude that $\xi_r=S(b_{r,0})$,
as announced.

\subsection{Proof of Theorem~\ref{th:MainA5D4}~\ref{it:MADa}}
\label{ss:ProofTh(i)}
Again, we restrict our attention to the case~I.

We set $\nu=\alpha_1+2\alpha_2+2\alpha_3+2\alpha_4+\alpha_5$.

We fix $(r,s)\in\mathbb N^2$. The weight of $b_{r,s}$ is $(r+2s)\nu$.
As in Section~\ref{ss:ProofProp}, a direct computation gives the
Lusztig data of $b_{r,s}$ w.r.t.\ the reduced word $\mathbf i$:
$$\mathbf n_{\mathbf i}(b_{r,s})=(r+s,r+s,0,0,0,s,s,0,0,0,r+s,r+s,0,0,0).$$

Let $\Omega$ be the orientation
$$
\begin{tikzpicture}
\node (1) at (1,0){$1$};
\node (2) at (2,0){$2$};
\node (3) at (3,0){$3$};
\node (4) at (4,0){$4$};
\node (5) at (5,0){$5$};
\draw[->] (1) to (2);
\draw[->] (3) to (2);
\draw[->] (3) to (4);
\draw[->] (5) to (4);
\end{tikzpicture}
$$
of the Dynkin diagram and let $\Lambda_\Omega$ be the preprojective
algebra it defines. A $\Lambda_\Omega$-module is the datum of a pair
$(\mathbf V,x)$, where $\mathbf V$ is an $I$-graded $\mathbf k$-vector
space and $x\in\Lambda_{\mathbf V,\Omega}$.

The category of $\Lambda_\Omega$-modules is tame and is described
in full details in~\cite{GeissLeclercSchroer05}. We refer to this
paper for more information. As usual, we denote by $S_i$ the
simple $\Lambda_\Omega$-module of dimension-vector $\alpha_i$.
Let $P$ and $Q_\lambda$ be the following $\Lambda_\Omega$-modules,
where $\lambda$ is a parameter in $\mathbf k\setminus\{0,1\}$.
$$
\begin{tikzpicture}
\begin{scope}[scale=1.3]
\node (10) at (1,0){$2$};
\node (30) at (3,0){$4$};
\node (01) at (0,1){$1$};
\node (21) at (2,1){$33$};
\node (41) at (4,1){$5$};
\node (12) at (1,2){$22$};
\node (32) at (3,2){$44$};
\node (03) at (0,3){$1$};
\node (23) at (2,3){$33$};
\node (43) at (4,3){$5$};
\node (14) at (1,4){$2$};
\node (34) at (3,4){$4$};
\draw[->] (14) to node[pos=.3,left]{$\scriptstyle-1$} (03);
\draw[->] (14) to node[pos=.6,left]{$\left(\bsm1\\0\esm\right)$} (23);
\draw[->] (34) to node[pos=.6,right]{$\left(\bsm0\\1\esm\right)$} (23);
\draw[->] (34) to node[pos=.3,right]{$\scriptstyle1$} (43);
\draw[->] (03) to node[pos=.6,left]{$\left(\bsm1\\0\esm\right)$} (12);
\draw[->] (23) to node[pos=.3,left]{$I_2$} (12);
\draw[->] (23) to node[pos=.3,right]{$-I_2$} (32);
\draw[->] (43) to node[pos=.6,right]{$\left(\bsm0\\1\esm\right)$} (32);
\draw[->] (12) to node[pos=.3,left]{$\left(\bsm0&1\esm\right)$} (01);
\draw[->] (12) to node[pos=.6,left]{$I_2$} (21);
\draw[->] (32) to node[pos=.6,right]{$I_2$} (21);
\draw[->] (32) to node[pos=.3,right]{$\left(\bsm1&0\esm\right)$} (41);
\draw[->] (01) to node[pos=.6,left]{$\scriptstyle-1$} (10);
\draw[->] (21) to node[pos=.3,left]{$\left(\bsm0&1\esm\right)$} (10);
\draw[->] (21) to node[pos=.3,right]{$\left(\bsm1&0\esm\right)$} (30);
\draw[->] (41) to node[pos=.6,right]{$\scriptstyle-1$} (30);
\draw (2,-.6) node{$P$};
\end{scope}
\begin{scope}[scale=1.3,xshift=6cm]
\node (10) at (1,0){$2$};
\node (30) at (3,0){$4$};
\node (01) at (0,1){$1$};
\node (21) at (2,1){$33$};
\node (41) at (4,1){$5$};
\node (12) at (1,2){$2$};
\node (32) at (3,2){$4$};
\draw[->] (12) to node[pos=.3,left]{$\scriptstyle1$} (01);
\draw[->] (12) to node[pos=.6,left]{$\left(\bsm1\\0\esm\right)$} (21);
\draw[->] (32) to node[pos=.6,right]{$\left(\bsm0\\1\esm\right)$} (21);
\draw[->] (32) to node[pos=.3,right]{$\scriptstyle1$} (41);
\draw[->] (01) to node[pos=.6,left]{$\scriptstyle-\lambda$} (10);
\draw[->] (21) to node[pos=.3,left]{$\left(\bsm\lambda&1\esm\right)$} (10);
\draw[->] (21) to node[pos=.3,right]{$\left(\bsm1&1\esm\right)$} (30);
\draw[->] (41) to node[pos=.6,right]{$\scriptstyle-1$} (30);
\draw (2,-.6) node{$Q_\lambda$};
\end{scope}
\end{tikzpicture}
$$
In these pictures, as customary, a digit $i$ denotes a basis vector
in the vector space $\mathbf V_i$, and the small matrices that
adorn the arrows indicate the action of the arrows $h$.
The module $P$ is projective and injective. Each module
$Q_\lambda$ lies at the mouth of an homogeneous tube,
whence $\Ext^1(Q_\lambda,Q_\mu)=0$ for $\lambda\neq\mu$.

We fix an $I$-graded $\mathbf k$-vector space $\mathbf W$ of
dimension-vector $\nu$. Since the module $P$ is projective,
it is rigid: the closure of the orbit $\{x\in\Lambda_{\mathbf
W^2,\Omega}\mid(\mathbf W^2,x)\cong P\}$ is an irreducible component
of $\Lambda_{\mathbf W^2,\Omega}$. The orbit of each module
$Q_\lambda$ has codimension $1$ in $\Lambda_{\mathbf W,\Omega}$;
therefore, the closure of the union of these orbits is an irreducible
component of $\Lambda_{\mathbf W,\Omega}$.

Let $(r,s)\in\mathbb N^2$. By Section~2.6 in
\cite{GeissLeclercSchroer05}, the closure of the set of all
points $x\in\Lambda_{\mathbf W^{r+2s},\Omega}$ such that
$(\mathbf W^{r+2s},x)$ is isomorphic to a module of the form
$$M_{r,s}=Q_{\lambda_1}\oplus\cdots\oplus
Q_{\lambda_r}\oplus\underbrace{P\oplus\cdots\oplus
P}_{s\text{ times}}$$
is an irreducible component $\Lambda_{b,\Omega}$ of
$\Lambda_{\mathbf W^{r+2s},\Omega}$.

Observing that the word $\mathbf i$ is adapted to the orientation
$\Omega$, we can determine the Lusztig datum of $b$ in direction
$\mathbf i$ by looking at the multiplicities in a Krull-Remak-Schmidt
decomposition of the restriction of $M_{r,s}$ to the quiver
$(I,\Omega)$ (see Section~7.3 in \cite{BaumannKamnitzer12} for the
full justification of this procedure). We find
$$\mathbf n_{\mathbf i}(b)=(r+s,r+s,0,0,0,s,s,0,0,0,r+s,r+s,0,0,0)
=\mathbf n_{\mathbf i}(b_{r,s}),$$
from where we conclude that $b=b_{r,s}$.

Using the definition of the crystal structure on irreducible
components of the nilpotent varieties, one then deduces that
$$b_{r,s}=\sigma(b_{r,s})\quad\text{and}\quad
\varphi_i(b_{r,s})=
\begin{cases}
r+s&\text{if $i\in\{2,4\}$,}\\
0&\text{if $i\in\{1,3,5\}$.}
\end{cases}$$

Similarly, for $(t_1,t_3,t_5)\in\mathbb N^3$ and
$b=\tilde e_1^{t_1}\tilde e_3^{t_3}\tilde e_5^{t_5}b_{r,s}$,
a general point in $\Lambda_{b,\Omega}$ is the datum of the
structural maps of a $\Lambda_\Omega$-module isomorphic to
something of the form
$$Q_{\lambda_1}\oplus\cdots\oplus Q_{\lambda_r}\oplus
P^{\oplus s}\oplus S_1^{\oplus t_1}\oplus
S_3^{\oplus t_3}\oplus S_5^{\oplus t_5},$$
whence
$$\tilde e_1^{t_1}\tilde e_3^{t_3}\tilde e_5^{t_5}b_{r,s}=\sigma
\bigl(\tilde e_1^{t_1}\tilde e_3^{t_3}\tilde e_5^{t_5}b_{r,s}\bigr)
\quad\text{and}\quad
\varphi_i\bigl(\tilde e_1^{t_1}\tilde e_3^{t_3}\tilde
e_5^{t_5}b_{r,s}\bigr)=
\begin{cases}
r+s&\text{if $i\in\{2,4\}$,}\\
t_i&\text{if $i\in\{1,3,5\}$.}
\end{cases}$$
Given $(r',s',r'',s'')\in\mathbb N^4$, the equivalence
$$b_{r',s'}\leq_{\str}b_{r'',s''}\ \Longleftrightarrow\
\bigl(r'+2s'=r''+2s''\;\text{ and }\;r'\leq r''\bigr)$$
now follows from the definition of $\leq_{\str}$.

If $b_{r',s'}\leq_{\pol}b_{r'',s''}$, then $b_{r',s'}$ and
$b_{r'',s''}$ have the same weight, whence $r'+2s'=r''+2s''$,
and $\varphi_i(b_{r',s'})\leq\varphi_i(b_{r'',s''})$ for
all $i$, whence $r'\leq r''$. The converse implication
$$\bigl(r'+2s'=r''+2s''\;\text{ and }\;r'\leq r''\bigr)\
\Longrightarrow\ b_{r',s'}\leq_{\pol}b_{r'',s''}$$
follows from the two following facts:
first, $\Pol(b_{0,1})\subset\Pol(b_{2,0})$;
second, for any $(r,s)\in\mathbb N^2$,
$\Pol(b_{r,s})=r\Pol(b_{1,0})+s\Pol(b_{0,1})$,
where the sum is the Minkowski sum of convex bodies.
The first fact can be shown either by a direct computation,
or as a consequence of Theorem~\ref{th:CompCanSemican},
once noticed that
$$\langle S(b_{2,0})^*,G(b_{0,1})\rangle=
\langle S(b_{2,0})^*,\eta_1\rangle-
\langle S(b_{2,0})^*,G(b_{2,0})\rangle=
\binom21-1\neq0.$$
The second fact follows from the construction of
$\Pol(b)$ given in~\cite{BaumannKamnitzer12} or from
Remark~3.5~(ii) in~\cite{BaumannKamnitzerTingley11}.

\subsection{Proof of Theorem~\ref{th:MainA5D4}~\ref{it:MADc}}
\label{ss:ProofTh(iii)}
We keep the notation of the previous section.
We label the oriented edges in $H$ as follows.
$$
\begin{tikzpicture}[scale=1.6]
\node (1) at (1,0){$1$};
\node (2) at (2,0){$2$};
\node (3) at (3,0){$3$};
\node (4) at (4,0){$4$};
\node (5) at (5,0){$5$};
\draw[->] (1) to[out=20,in=160]
  node[midway,above]{$\scriptstyle h_1$} (2);
\draw[->] (3) to[out=160,in=20]
  node[midway,above]{$\scriptstyle h_2$} (2);
\draw[->] (3) to[out=20,in=160]
  node[midway,above]{$\scriptstyle h_3$} (4);
\draw[->] (5) to[out=160,in=20]
  node[midway,above]{$\scriptstyle h_4$} (4);
\draw[->] (2) to[out=-160,in=-20]
  node[midway,below]{$\scriptstyle\overline h_1$} (1);
\draw[->] (2) to[out=-20,in=-160]
  node[midway,below]{$\scriptstyle\overline h_2$} (3);
\draw[->] (4) to[out=-160,in=-20]
  node[midway,below]{$\scriptstyle\overline h_3$} (3);
\draw[->] (4) to[out=-20,in=-160]
  node[midway,below]{$\scriptstyle\overline h_4$} (5);
\end{tikzpicture}
$$
From the datum $(\mathbf V,x)$ of a $\Lambda_\Omega$-module, we will
define four maps
\begin{xalignat*}2
y&:\mathbf V_1\oplus\mathbf V_5\xrightarrow{x_{h_1}\oplus x_{h_4}}
\mathbf V_2\oplus\mathbf V_4,&
z&:\mathbf V_3\xrightarrow{\left(\bsm x_{h_2}\\x_{h_3}\esm\right)}
\mathbf V_2\oplus\mathbf V_4,\\[4pt]
\overline y&:\mathbf V_2\oplus\mathbf V_4\xrightarrow{x_{\overline
h_1}\oplus x_{\overline h_4}}\mathbf V_1\oplus\mathbf V_5,&
\overline z&:\mathbf V_2\oplus\mathbf V_4\xrightarrow{(\bsm
x_{\overline h_2}&x_{\overline h_3}\esm)}\mathbf V_3.
\end{xalignat*}

In the case of $P$, both maps $y$ and $z$ are injective and both
maps $\overline y$ and $\overline z$ are surjective. The subspace
$\im y\cap\im z$ has dimension-vector $\alpha_2+\alpha_4$ (it is
linearly spanned by the $2$ and the $4$ on the fifth line of the
picture of $P$). The subspace $\ker\overline y+\ker\overline z$
has dimension-vector $3(\alpha_2+\alpha_4)$ (it is spanned by the
$2$ and the $4$ on the third and fifth lines of the picture of
$P$). In addition, $(y\overline y)(\ker\overline y+\ker\overline
z)=\im y\cap\im z$ and $(y\overline y)^{-1}(\im y\cap\im z)=
\ker\overline y+\ker\overline z$.

In the case of $Q_\lambda$, both maps $y$ and $z$ are injective and
both maps $\overline y$ and $\overline z$ are surjective. We
have $\im y\cap\im z=\ker\overline y+\ker\overline z$; this subspace
has dimension-vector $\alpha_2+\alpha_4$ (it is linearly spanned
by the $2$ and the $4$ on the third line of the picture of
$Q_\lambda$). This subspace is also the kernel as well as the image
of $y\overline y$.

Let $(r,s)\in\mathbb N^2$. Set $p=r+s$ and $\mathbf V=\mathbf W^p$.
Let $(\mathbf j,\mathbf a)\in S_{p\nu}$ be so that
$\eta_p=\Theta_{\mathbf j}^{(\mathbf a)}$.
A flag of type $(\mathbf j,\mathbf a)$ in $\mathbf V$ is
the datum of subspaces $\mathbf X_i\subseteq\mathbf V_i$
for $i\in\{1,3,5\}$ and of $2$-steps filtrations
$0\subseteq\mathbf X'_i\subseteq\mathbf X''_i\subseteq\mathbf V_i$
for $i\in\{2,4\}$ of suitable dimension, namely
$$\dim\mathbf X_1=\dim\mathbf X'_2=\dim\mathbf X'_4=\dim\mathbf
X_5=p,\quad\dim\mathbf X_3=2p,\quad\dim\mathbf X''_2=\dim\mathbf
X''_4=3p.$$

Let $x$ be a general point in $\Lambda_{b_{2r,s},\Omega}$.
Thus, $(\mathbf V,x)$ is isomorphic to a $\Lambda_\Omega$-module
of the form
$$Q_{\lambda_1}\oplus\cdots\oplus Q_{\lambda_{2r}}\oplus
\underbrace{P\oplus\cdots\oplus P}_{s\text{ times}}$$
where $\lambda_1$, \dots, $\lambda_{2r}$ are distinct.
In agreement with this decomposition, we write $\mathbf V$ as a
direct sum $\mathbf V^{(Q)}\oplus\mathbf V^{(P)}$.

We look for $x$-stable flags of type $(\mathbf j,\mathbf a)$ in
$\mathbf V$. Since the maps $y$ and $z$ are injective, we must have
$y(\mathbf X_1\oplus\mathbf X_5)=z(\mathbf X_3)=\mathbf X'_2\oplus
\mathbf X'_4$, for dimension reasons. Likewise, the
surjectivity of $\overline y$ and $\overline z$ implies that
$\overline y^{-1}(\mathbf X_1\oplus\mathbf X_5)=\overline
z^{-1}(\mathbf X_3)=\mathbf X''_2\oplus\mathbf X''_4$.
Noticing that $y\overline y$ must map $\mathbf X''_2\oplus\mathbf
X''_4$ to $\mathbf X'_2\oplus\mathbf X'_4$, we get
\begin{align*}
(y\overline y)(\ker\overline y+\ker\overline z)\subseteq
\mathbf X'_2&\oplus\mathbf X'_4\subseteq\im y\cap\im z,\\[4pt]
\ker\overline y+\ker\overline z\subseteq
\mathbf X''_2&\oplus\mathbf X''_4\subseteq
(y\overline y)^{-1}(\im y\cap\im z).
\end{align*}

These conditions imply that $\mathbf X'_2$, $\mathbf X''_2$,
$\mathbf X'_4$ and $\mathbf X''_4$ are compatible
with the decomposition $\mathbf V=\mathbf V^{(Q)}\oplus\mathbf
V^{(P)}$ and determine the intersections of these four subspaces
with $\mathbf V^{(P)}$. For our problem of finding the $x$-stable
flags of type $(\mathbf j,\mathbf a)$, we can thus neglect
$\mathbf V^{(P)}$. Consequently, we now focus on $\mathbf V^{(Q)}$
and implicitly restrict the maps $x_h$, $y$, $z$, $\overline y$
and $\overline z$ to this subspace. We also simplify the notation by
writing $\mathbf X'_2$ instead of $\mathbf X'_2\cap\mathbf V^{(Q)}$,
and similarly for $\mathbf X''_2$, $\mathbf X'_4$
and~$\mathbf X''_4$.

After this renaming, $\dim\mathbf X'_2=\dim\mathbf X'_4=r$
and $\dim\mathbf X''_2=\dim\mathbf X''_4=3r$. Let
$\mathbf Y_2\oplus\mathbf Y_4$ denote the subspace
$\im y\cap\im z$ in $\mathbf V^{(Q)}$. Then
$\dim\mathbf Y_2=\dim\mathbf Y_4=2r$. We must have
\begin{gather*}
\mathbf X'_2\subseteq\mathbf Y_2\subseteq\mathbf X''_2,\\[4pt]
\mathbf X'_4\subseteq \mathbf Y_4\subseteq\mathbf X''_4,\\[4pt]
(y\overline y)(\mathbf X''_2\oplus\mathbf X''_4)\subseteq
\mathbf X'_2\oplus\mathbf X'_4,\\[4pt]
(z\overline z)(\mathbf X''_2\oplus\mathbf X''_4)\subseteq
\mathbf X'_2\oplus\mathbf X'_4.
\end{gather*}

Recalling the preprojective relations
$x_{h_1}x_{\overline h_1}+x_{h_2}x_{\overline h_2}=0$ and
$x_{h_3}x_{\overline h_3}+x_{h_4}x_{\overline h_4}=0$,
we can rephrase this in terms of representations of a tame quiver:
$$\xymatrix@R=3.3em@C=4em{\mathbf X''_2/\mathbf Y_2\ar[r]\ar[dr]&
\mathbf X'_2\\\mathbf X''_4/\mathbf Y_4\ar[r]\ar[ur]&\mathbf X'_4}
\qquad\raisebox{-2.4em}{is a subrepresentation of}\qquad
\xymatrix@R=3.3em@C=4em{\mathbf V^{(Q)}_2/\mathbf Y_2
\ar[r]^(.55){x_{h_2}x_{\overline h_2}}
\ar[dr]_(.25){x_{h_3}x_{\overline h_2}}
&\mathbf Y_2\\
\mathbf V^{(Q)}_4/\mathbf Y_4
\ar[r]_(.55){x_{h_3}x_{\overline h_3}}
\ar[ur]_(.75){x_{h_2}x_{\overline h_3}}
&\mathbf Y_4.}$$

Choosing adequate bases in $\mathbf V^{(Q)}_2$ and
$\mathbf V^{(Q)}_4$, the linear maps on the right diagram are
given by the identity matrix of size $2r$, except for the top
horizontal arrow, which is represented by a diagonal matrix
with coefficients $\lambda_1$, \dots, $\lambda_{2r}$.
Subrepresentations of the required dimension are obtained by
taking $r$ among the $2r$ eigenspaces of the composed map
$$(x_{h_3}x_{\overline h_2})^{-1}(x_{h_3}x_{\overline h_3})
(x_{h_2}x_{\overline h_3})^{-1}(x_{h_2}x_{\overline h_2}).$$

All in all, we have $\binom{2r}r$ $x$-stable flags of type
$(\mathbf j,\mathbf a)$ in $\mathbf V$. The Euler
characteristic of this set of flags is thus equal to $\binom{2r}r$,
and also to $\kappa\bigl(\Theta_{\mathbf j}^{(\mathbf a)}\bigr)(x)
=\langle S(b_{2r,s})^*,\eta_p\rangle$ (see Section~\ref{ss:BackLuszCons}).

\subsection{Analysis of extensions of quiver representations}
\label{ss:AnalExtQuivRep}
Again, we consider the Dynkin diagram of type $A_5$. The reduced word
$$\mathbf i=(2,4,1,3,5,2,4,1,3,5,2,4,1,3,5)$$
is
adapted to the following orientation $\Omega$, in the sense
of \cite{Lusztig90}, Section~4.7.
$$
\begin{tikzpicture}
\node (1) at (1,0){$1$};
\node (2) at (2,0){$2$};
\node (3) at (3,0){$3$};
\node (4) at (4,0){$4$};
\node (5) at (5,0){$5$};
\draw[->] (1) to (2);
\draw[->] (3) to (2);
\draw[->] (3) to (4);
\draw[->] (5) to (4);
\end{tikzpicture}
$$

Our interest lies in the representation theory over $\mathbf k$ of the
quiver $Q=(I,\Omega)$. The dimension-vector map induces a bijection
from the set of isomorphism classes of indecomposable representations
of $Q$ onto the set of positive roots (Gabriel's theorem). For each
positive root $\beta$, we pick an indecomposable $\mathbf kQ$-module
$M(\beta)$ of dimension-vector $\beta$. These modules can be organized
in an Auslander-Reiten quiver, as follows.
$$
\begin{tikzpicture}[xscale=1.2]
\node (03) at (0,3){$M(\alpha_2)$};
\node (01) at (0,1){$M(\alpha_4)$};
\node (14) at (1,4){$M(\alpha_{1,2})$};
\node (12) at (1,2){$M(\alpha_{2,4})$};
\node (10) at (1,0){$M(\alpha_{4,5})$};
\node (23) at (2,3){$M(\alpha_{1,4})$};
\node (21) at (2,1){$M(\alpha_{2,5})$};
\node (34) at (3,4){$M(\alpha_{3,4})$};
\node (32) at (3,2){$M(\alpha_{1,5})$};
\node (30) at (3,0){$M(\alpha_{2,3})$};
\node (43) at (4,3){$M(\alpha_{3,5})$};
\node (41) at (4,1){$M(\alpha_{1,3})$};
\node (54) at (5,4){$M(\alpha_5)$};
\node (52) at (5,2){$M(\alpha_3)$};
\node (50) at (5,0){$M(\alpha_1)$};
\draw[->] (03) to (14);
\draw[->] (03) to (12);
\draw[->] (01) to (12);
\draw[->] (01) to (10);
\draw[->] (14) to (23);
\draw[->] (12) to (23);
\draw[->] (12) to (21);
\draw[->] (10) to (21);
\draw[->] (23) to (34);
\draw[->] (23) to (32);
\draw[->] (21) to (32);
\draw[->] (21) to (30);
\draw[->] (34) to (43);
\draw[->] (32) to (43);
\draw[->] (32) to (41);
\draw[->] (30) to (41);
\draw[->] (43) to (54);
\draw[->] (43) to (52);
\draw[->] (41) to (52);
\draw[->] (41) to (50);
\end{tikzpicture}
$$
Here, we wrote $\alpha_{i,j}$ for $\alpha_i+\alpha_{i+1}+\cdots+\alpha_j$.
The enumeration $(\beta_1,\ldots,\beta_{15})$ of the positive
roots defined by $\mathbf i$, namely
$$(\,\alpha_2,\ \alpha_4,\
\alpha_{1,2},\ \alpha_{2,4},\ \alpha_{4,5},\
\alpha_{1,4},\ \alpha_{2,5},\
\alpha_{3,4},\ \alpha_{1,5},\ \alpha_{2,3},\
\alpha_{3,5},\ \alpha_{1,3},\
\alpha_5,\ \alpha_3,\ \alpha_1\,),$$
can be read from this Auslander-Reiten quiver, proceeding columnwise
from the top left corner to the bottom right corner. The simple
$\mathbf kQ$-modules are of course the modules $M(\alpha_i)$,
for $i\in I$; as customary, we denote them by $S_i$.

Consider the following lists of $\mathbf kQ$-modules.
\begin{center}
\setlength\tabcolsep{1.4em}
\begin{tabular}{>{$}l<{$}>{$}l<{$}>{$}l<{$}}
\toprule
t&M_t&N_t\\
\midrule
1&M(\alpha_2)&0\\[3pt]
2&M(\alpha_{1,2})&M(\alpha_1)\\[3pt]
3&M(\alpha_{1,2})\oplus M(\alpha_{2,4})&M(\alpha_{1,4})\\[3pt]
4&M(\alpha_{1,2})\oplus M(\alpha_{2,5})&M(\alpha_{1,5})\\[3pt]
5&M(\alpha_{1,2})\oplus M(\alpha_{2,3})&M(\alpha_{1,3})\\[3pt]
6&M(\alpha_{2,4})&M(\alpha_{3,4})\\[3pt]
7&M(\alpha_{1,4})&M(\alpha_1)\oplus M(\alpha_{3,4})\\[3pt]
8&M(\alpha_{1,4})\oplus M(\alpha_{2,5})&
M(\alpha_{1,5})\oplus M(\alpha_{3,4})\\[3pt]
9&M(\alpha_{1,4})\oplus M(\alpha_{2,3})&
M(\alpha_{1,3})\oplus M(\alpha_{3,4})\\[3pt]
10&M(\alpha_{2,5})&M(\alpha_{3,5})\\[3pt]
11&M(\alpha_{1,5})&M(\alpha_1)\oplus M(\alpha_{3,5})\\[3pt]
12&M(\alpha_{1,5})\oplus M(\alpha_{2,3})&
M(\alpha_{1,3})\oplus M(\alpha_{3,5})\\[3pt]
13&M(\alpha_{2,3})&M(\alpha_3)\\[3pt]
14&M(\alpha_{1,3})&M(\alpha_1)\oplus M(\alpha_3)\\[3pt]
\bottomrule
\end{tabular}
\end{center}

\begin{proposition}
\label{pr:ListExtS2}
Let $0\to S_2\xrightarrow fM\to N\to0$ be a short exact sequence of
$\mathbf kQ$-modules. Then there exists $t\in\{1,\ldots,14\}$
and a $\mathbf k\mathbf Q$-module $L$ such that $M\cong M_t\oplus L$
and $N\cong N_t\oplus L$.
\end{proposition}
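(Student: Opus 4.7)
The plan is to decompose $M$ into indecomposable summands via Krull--Remak--Schmidt and then use automorphisms of $M$ to put the embedding $f$ into a normal form. Writing $M = \bigoplus_k X_k$ with each $X_k$ indecomposable, each component $f_k : S_2 \to X_k$ is either zero or injective. A direct computation shows that $\dim \Hom(S_2, M(\alpha))$ equals the coefficient of $\alpha_2$ in $\alpha$, so $f_k$ can be non-zero only if $X_k \cong M(\alpha)$ with this coefficient equal to~$1$. In type $A_5$ this selects exactly the eight indecomposables $A_0 = M(\alpha_2)$, $A_1 = M(\alpha_{1,2})$, $A_2 = M(\alpha_{2,3})$, $A_3 = M(\alpha_{2,4})$, $A_4 = M(\alpha_{2,5})$, $A_5 = M(\alpha_{1,3})$, $A_6 = M(\alpha_{1,4})$, $A_7 = M(\alpha_{1,5})$.

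The key technical step is an absorption move: if $X_k \cong A_i$ and $X_l \cong A_j$ are two summands of $M$ with $f_k, f_l \neq 0$, and $\Hom(A_i, A_j)$ contains a morphism $\psi$ whose $V_2$-component is non-zero, then for a suitable scalar $c$ the automorphism $\operatorname{id}_M + c\psi$ of $M$ sends $f$ to a morphism whose $l$-th component vanishes. Together with the obvious $\GL_{n_i}$-action on each isotypic block of $M$, iterated absorption brings $f$ into a normal form in which each type of active summand (i.e.\ with $f_k \neq 0$) appears with multiplicity one and no further absorption is possible between distinct types.

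I would then tabulate the non-zero morphisms $A_i \to A_j$ that restrict non-trivially to the $V_2$-component, using Ringel's formulas~\eqref{eq:RingelFormulas} combined with a direct inspection of the structure maps. The expected outcome: $A_0$ maps non-trivially on $V_2$ into every other $A_j$, so whenever $A_0$ is active all other components are absorbed and one lands in case $t = 1$ with $L = N$; otherwise the irreducible supports $S \subseteq \{1, \ldots, 7\}$ are precisely the seven singletons (corresponding to cases $t = 2, 13, 6, 10, 14, 7, 11$) together with the six pairs $\{A_1, A_2\}$, $\{A_1, A_3\}$, $\{A_1, A_4\}$, $\{A_2, A_6\}$, $\{A_2, A_7\}$, $\{A_4, A_6\}$ (corresponding to cases $t = 5, 3, 4, 9, 12, 8$). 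In each case, $M_t = \bigoplus_{i \in S} A_i$, the quotient $N_t = M_t / f(S_2)$ is read off by inspection of the explicit modules, and $L$ collects the remaining inactive summands of $M$.

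The main obstacle will be the tabulation of $\Hom$-spaces: several morphisms must be detected correctly. For example, both $\Hom(A_3, A_2)$ and $\Hom(A_7, A_5)$ contain maps non-zero on $V_2$ (coming from the natural surjections $M(\alpha_{2,4}) \twoheadrightarrow M(\alpha_{2,3})$ and $M(\alpha_{1,5}) \twoheadrightarrow M(\alpha_{1,3})$ obtained by killing suitable submodules), whereas $\Hom(A_5, A_7) = 0$. Overlooking any such asymmetry would produce spurious irreducible supports not appearing in the table.
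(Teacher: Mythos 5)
Your proposal is correct and is essentially the paper's own argument: decompose $M$ by Krull--Remak--Schmidt, use unipotent automorphisms of $M$ to cancel components of $f$ until the set of active summands is irreducible, and enumerate the possible configurations, which are indeed the fourteen of the table (your singletons, pairs, and the $M(\alpha_2)$ case match the paper's list exactly). The only cosmetic difference is that the paper organizes the absorption by paths of positive length in the Auslander--Reiten quiver, whereas you use $\Hom$-spaces with nonzero $V_2$-component; the two criteria agree for the eight indecomposables involved.
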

\begin{proof}
Let $K=\{1,3,4,6,7,9,10,12\}$. In the $\mathbf kQ$-module $M(\beta_k)$,
the vector space attached to the vertex $2$ is nonzero if and only
if $k\in K$. In this case, it is one dimensional, spanned by a vector
$e_k$.

We endow $K$ with an order by saying that $k\succ\ell$ is there
is a path of positive length from $M(\beta_k)$ to $M(\beta_\ell)$
in the Auslander-Reiten quiver. Thus for instance
$\alpha_{2,4}\succ\alpha_{1,3}$ but
$\alpha_{1,2}\not\succ\alpha_{2,3}$.

Writing $M$ as a direct sum of indecomposable modules, we find an
isomorphism
$$g:M\xrightarrow\simeq\bigoplus_{k=1}^{15}M(\beta_k)\otimes W_k,$$
where the vector spaces $W_k$ account for the multiplicities. Then
$(g\circ f)(e_1)$ has the form $\sum_{k\in K}e_k\otimes w_k$,
where $w_k\in W_k$.

Using the explicit form of the modules $M(\beta_k)$, one sees by a
case-by-case analysis that it is possible to modify $g$ so as to
cancel the elements $w_\ell$ each time there is a $k\succ\ell$
such that $w_k\neq0$. After this modification, the elements in the
set $K'=\{k\in K\mid w_k\neq0\}$ are pairwise incomparable. This
leaves a list of fourteen possibilities for $K'$ (note that $K'$
cannot be empty).

For $k\in K'$, choose a complementary subspace $W'_k$ in $W_k$ to
the line $\mathbf kw_k$, and for $k\notin K'$, set $W'_k=W_k$. Let
$$L=\bigoplus_{k=1}^{15}M(\beta_k)\otimes W'_k.$$
Then $M\cong M_t\oplus L$ for a certain $t\in\{1,\ldots,14\}$.
In this isomorphism, the image of $f$ is contained in $M_t$, so
the cokernel $N$ of $f$ is isomorphic to $N_t\oplus L$.
\end{proof}

One can of course analyze in a similar fashion the extensions
by $S_4$, using for instance the diagram automorphism.

To a $\mathbf kQ$-module $M$, we associate the row vector
$\mathbf n(M)=(n_1,\ldots,n_{15})$, where $n_k$ is the
multiplicity of $M(\beta_k)$ in a Krull-Remak-Schmidt
decomposition of $M$. Then $M\mapsto\mathbf n(M)$ induces
a bijection from the set of isomorphism classes of
$\mathbf kQ$-modules onto $\mathbb N^{15}$.

Consider the following elements in $\mathbb N^{15}$.

\vspace{-12pt}
\begin{center}
\begin{tabular}{>{$}p{1.8em}<{$\hfill}@{$\ =\ ($}
>{\hfill$}p{1em}<{$}@{$,\;$}
*{13}{>{\hfill$}p{1.4em}<{$}@{$,\;$}}
>{\hfill$}p{1.4em}<{$}@{$\;)$}}
\mathbf w_1&1&0&0&0&0&0&0&0&0&0&0&-1&0&1&1\\[1pt]
\mathbf w_2&0&1&0&0&0&0&0&0&0&0&-1&0&1&1&0\\[1pt]
\mathbf w_3&0&0&1&0&0&0&0&0&0&0&0&-1&0&1&0\\[1pt]
\mathbf w_4&0&0&1&0&0&0&1&0&-1&0&0&-1&0&1&1\\[1pt]
\mathbf w_5&0&0&1&1&0&-1&0&0&0&0&0&-1&0&1&1\\[1pt]
\mathbf w_6&0&0&0&1&0&0&0&-1&0&0&0&-1&0&1&1\\[1pt]
\mathbf w_7&0&0&0&0&0&1&1&-1&-1&0&0&-1&0&1&1\\[1pt]
\mathbf w_8&0&0&0&0&0&1&1&0&-1&-1&-1&0&1&1&0\\[1pt]
\mathbf w_9&0&0&0&1&0&0&0&0&0&-1&-1&0&1&1&0\\[1pt]
\mathbf w_{10}&0&0&0&1&1&0&-1&0&0&0&-1&0&1&1&0\\[1pt]
\mathbf w_{11}&0&0&0&0&1&1&0&0&-1&0&-1&0&1&1&0\\[1pt]
\mathbf w_{12}&0&0&0&0&1&0&0&0&0&0&-1&0&0&1&0\\[1pt]
\mathbf w_{13}&0&0&0&0&0&0&0&1&0&0&-1&0&1&0&0\\[1pt]
\mathbf w_{14}&0&0&0&0&0&0&0&0&1&0&-1&-1&0&1&0\\[1pt]
\mathbf w_{15}&0&0&0&0&0&0&0&0&0&1&0&-1&0&0&1\\[1pt]
\mathbf w_{16}&0&0&0&0&0&1&0&-1&0&0&0&-1&0&1&0\\[1pt]
\mathbf w_{17}&0&0&0&0&0&0&1&0&0&-1&-1&0&0&1&0\\[1pt]
\mathbf x&0&0&0&0&0&0&0&0&0&0&1&1&-1&-2&-1\\[1pt]
\mathbf x'&0&0&0&0&0&0&0&0&0&0&0&1&0&-1&-1\\[1pt]
\mathbf y&1&1&0&0&0&0&0&0&0&0&1&1&0&0&0\\[1pt]
\mathbf z&1&1&0&0&0&1&1&0&0&0&1&1&0&0&0
\end{tabular}
\end{center}

\begin{corollary}
\label{co:ExtS2&S4}
Let $p\in\mathbb N$ and let $M$ et $N$ be two $\mathbf kQ$-modules.
If there is a short exact sequence
$$0\to S_2^{\oplus p}\oplus S_4^{\oplus p}\to M\to N\to0,$$
then $\mathbf n(M)-\mathbf n(N)-p\mathbf x$ belongs to the
$\mathbb N$-span of the vectors $\mathbf w_k$.
\end{corollary}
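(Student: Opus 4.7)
The plan is to reduce the short exact sequence to $2p$ elementary extensions---one for each composition factor of $S_2^{\oplus p} \oplus S_4^{\oplus p}$---and to analyze each such step by Proposition~\ref{pr:ListExtS2} or its mirror for $S_4$. Since the Dynkin automorphism $i \mapsto 6-i$ preserves the orientation $\Omega$ and exchanges $S_2$ with $S_4$, Proposition~\ref{pr:ListExtS2} immediately yields an analogous classification of one-step $S_4$-extensions by a mirror list $(M'_s, N'_s)_{s=1}^{14}$. Lifting a composition series of the semisimple submodule $S_2^{\oplus p} \oplus S_4^{\oplus p} \hookrightarrow M$ gives a filtration
\begin{equation*}
N = P_0 \subsetneq P_1 \subsetneq \cdots \subsetneq P_{2p} = M
\end{equation*}
in which $p$ of the simple subquotients are isomorphic to $S_2$ and $p$ to $S_4$. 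Applying the appropriate classification at each step $0 \to P_k/P_{k-1} \to M/P_{k-1} \to M/P_k \to 0$ and setting $\mathbf{v}_t := \mathbf{n}(M_t) - \mathbf{n}(N_t)$ and $\mathbf{v}'_s := \mathbf{n}(M'_s) - \mathbf{n}(N'_s)$, one obtains the telescoping identity
\begin{equation*}
\mathbf{n}(M) - \mathbf{n}(N) \;=\; \sum_{k\text{ of type }S_2} \mathbf{v}_{t_k} \;+\; \sum_{k\text{ of type }S_4} \mathbf{v}'_{s_k}.
\end{equation*}

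The key claim is then that for every $t \in \{1, \dots, 14\}$, the residue $\mathbf{v}_t - \mathbf{x}'$ lies in the $\mathbb{N}$-span of $\{\mathbf{w}_1, \dots, \mathbf{w}_{17}\}$, and likewise that $\mathbf{v}'_s - (\mathbf{x} - \mathbf{x}')$ does. This is a row-by-row inspection of the table in Proposition~\ref{pr:ListExtS2}: for example $\mathbf{v}_1 - \mathbf{x}' = \mathbf{w}_1$, $\mathbf{v}_2 - \mathbf{x}' = \mathbf{w}_3$, $\mathbf{v}_5 - \mathbf{x}' = \mathbf{w}_3 + \mathbf{w}_{15}$, and $\mathbf{v}_{14} - \mathbf{x}' = 0$. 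The analogous $S_4$ claim follows from the $S_2$ claim after one additional check, namely that the position permutation $(1\,2)(3\,5)(6\,7)(8\,10)(11\,12)(13\,15)$ induced by $i \mapsto 6-i$ preserves the set $\{\mathbf{w}_1, \dots, \mathbf{w}_{17}\}$. Summing the $2p$ contributions and using $p\mathbf{x}' + p(\mathbf{x}-\mathbf{x}') = p\mathbf{x}$ then gives exactly the conclusion of the corollary.

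The main obstacle is the fourteen-line tabular verification in the previous paragraph: nothing in it is delicate, each row reducing to a subtraction of three to five entries, but one must work through all the pairs $(M_t, N_t)$ of Proposition~\ref{pr:ListExtS2} explicitly, and likewise check that the set of $\mathbf{w}_k$'s is stable under the Dynkin symmetry. Conceptually, the $\mathbf{w}_k$'s encode the deviations of a generic single-simple extension from the ``maximal'' case captured by $\mathbf{x}'$ (for $S_2$) and $\mathbf{x} - \mathbf{x}'$ (for $S_4$); the identity $\mathbf{x} = \mathbf{x}' + (\mathbf{x}-\mathbf{x}')$ is what ultimately makes the corollary's choice of shift $p\mathbf{x}$ work.
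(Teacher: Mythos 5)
Correct, and essentially the paper's own argument: the paper likewise reduces to the one-step extensions classified in Proposition~\ref{pr:ListExtS2} (with the diagram automorphism handling the $S_4$ steps) and records only that each $\mathbf n(M_t)-\mathbf n(N_t)-\mathbf x'$ is an $\mathbb N$-combination of the $\mathbf w_k$, which is exactly your tabular check combined with the stability of $\{\mathbf w_1,\ldots,\mathbf w_{17}\}$ under the permutation $(1\,2)(3\,5)(6\,7)(8\,10)(11\,12)(13\,15)$ and the identity $\sigma(\mathbf x')=\mathbf x-\mathbf x'$, all of which do hold (your sample rows are verified correctly). One cosmetic slip: the filtration should read $0=P_0\subsetneq P_1\subsetneq\cdots\subsetneq P_{2p}=S_2^{\oplus p}\oplus S_4^{\oplus p}$ inside $M$ (a composition series of the kernel), the chain interpolating between $M$ and $N$ being the quotients $M/P_k$, exactly as the short exact sequences you then write down in fact use.
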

\begin{proof}
With the notation of Proposition~\ref{pr:ListExtS2}, it suffices
to check that each $\mathbf n(M_t)-\mathbf n(N_t)-\mathbf x'$
can be written as a (possibly empty) sum of vectors~$\mathbf w_k$.
\end{proof}

\subsection{Proof of Theorem~\ref{th:MainA5D4}~\ref{it:MADb}}
\label{ss:ProofTh(ii)}
As before, we focus on the case~I.

Let $\nu=\alpha_1+2\alpha_2+2\alpha_3+2\alpha_4+\alpha_5$ and
let $p\in\mathbb N$. Define $(\mathbf j,\mathbf a)\in S_{p\nu}$
so that $\eta_p=\Theta_{\mathbf j}^{(\mathbf a)}$. Let $\mathbf V$
be an $I$-graded $\mathbf k$-vector space of dimension-vector $2p\nu$
and recall the notation set up in Section~\ref{ss:BackLuszCons}.
To show Theorem~\ref{th:MainA5D4}~\ref{it:MADb}, it suffices to
prove the equation
\begin{equation}
\label{eq:DecompLja}
L_{\mathbf j,\mathbf a;\Omega}=L_{b_{0,p},\Omega}\oplus
L_{b_{2,p-1},\Omega}\oplus L_{b_{4,p-2},\Omega}\oplus\cdots\oplus
L_{b_{2p,0},\Omega}.
\end{equation}
To prove~\eqref{eq:DecompLja}, we will show that the map
$\pi_{\mathbf j,\mathbf a}$ is semismall with respect to the
stratification given by the $G_{\mathbf V}$-orbits on
$\mathbf E_{\mathbf V,\Omega}$ and identify the relevant strata.
The precise statement is given in Proposition~\ref{pr:SemiSmallness}
below.

We regard a flag of type $(\mathbf j,\mathbf a)$ in $\mathbf V$ as
a $5$-step filtration
\begin{equation}
\label{eq:5StepFilt}
\mathbf V=\mathbf V^0\supseteq\mathbf V^1\supseteq\mathbf V^2
\supseteq\mathbf V^3\supseteq\mathbf V^4\supseteq\mathbf V^5=0,
\end{equation}
such that the dimension-vector of $\mathbf V^{k-1}/\mathbf V^k$ is
$p(\alpha_2+\alpha_4)$, $p(\alpha_1+2\alpha_3+\alpha_5)$, or
$2p(\alpha_2+\alpha_4)$ according to whether $k\in\{1,5\}$,
$k\in\{2,4\}$, or $k=3$.

To a row vector $\mathbf n=(n_1,\ldots,n_{15})$ in
$\mathbb N^{15}$, we associate the weight
$|\mathbf n|=n_1\beta_1+\cdots+n_{15}\beta_{15}$.
This is the dimension-vector of any $\mathbf kQ$-module $M$
such that $\mathbf n(M)=\mathbf n$.

A point $(x,\mathbf V^\bullet)$ in
$\widetilde{\mathcal F}_{\mathbf j,\mathbf a}$ yields
$\mathbf kQ$-modules $M_{k,\ell}=(\mathbf V^k/\mathbf V^\ell,x)$,
for all $k<\ell$. Given $\mathbf u$ and $\mathbf v$ in
$\mathbb N^{15}$ such that $|\mathbf u|=2p\nu$ and
$|\mathbf v|=p\nu$, we denote by
$\widetilde{\mathcal F}^{\mathbf u,\mathbf v}$ the set of all
$(x,\mathbf V^\bullet)\in\widetilde{\mathcal F}_{\mathbf j,\mathbf a}$
such that $\mathbf n(M_{1,3})=\mathbf v$ and
$\mathbf n(M_{0,5})=\mathbf u$.

Likewise, for $\mathbf u\in\mathbb N^{15}$ such that
$|\mathbf u|=2p\nu$, we denote by $\mathscr O_{\mathbf u}$
the set of all $x\in\mathbf E_{\mathbf V,\Omega}$ such that
$\mathbf n((\mathbf V,x))=\mathbf u$. This is a
$G_{\mathbf V}$-orbit in $\mathbf E_{\mathbf V,\Omega}$.
If $b\in B(-\infty)$ has Lusztig datum $\mathbf u$ in direction
$\mathbf i$, then $L_{b,\Omega}$ is the intersection cohomology
sheaf on $\overline{\mathscr O_{\mathbf u}}$.

In this fashion, we partition
$\widetilde{\mathcal F}_{\mathbf j,\mathbf a}$ and
$\mathbf E_{\mathbf V,\Omega}$ into locally closed pieces. The
first projection $\pi_{\mathbf j,\mathbf a}:\widetilde{\mathcal F}
\to\mathbf E_{\mathbf V,\Omega}$ restricts to a map
$\pi^{\mathbf u,\mathbf v}:\widetilde{\mathcal F}^{\mathbf
u,\mathbf v}\to\mathscr O_{\mathbf u}$.

\begin{proposition}
\label{pr:SemiSmallness}
Assume that
$\widetilde{\mathcal F}^{\mathbf u,\mathbf v}\neq\varnothing$.
Then the map $\pi^{\mathbf u,\mathbf v}$ is a locally trivial
fibration with a smooth and connected fiber. Moreover,
for $x\in\mathscr O_{\mathbf u}$,
\begin{equation}
\label{eq:SemiSmallness}
2\dim(\pi^{\mathbf u,\mathbf v})^{-1}(x)+\dim\mathscr O_{\mathbf u}
\leq\dim\widetilde{\mathcal F}_{\mathbf j,\mathbf a},
\end{equation}
with equality if and only if $\mathbf u=2(p-s)\mathbf y+s\mathbf z$
and $\mathbf v=\mathbf u-p\mathbf y$ for some $s\in\{0,\ldots,p\}$.
\end{proposition}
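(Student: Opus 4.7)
The plan is to analyze the fibration $\pi^{\mathbf u,\mathbf v}$ through an iterated sequence of quiver-Grassmannian choices, then reduce the semismallness inequality to a combinatorial statement to be settled with the machinery of Section~\ref{ss:AnalExtQuivRep}. The starting observation is that, because the orientation $\Omega$ makes $\{2,4\}$ sinks and $\{1,3,5\}$ sources, each successive quotient $M_{k-1,k}$ of an $x$-stable flag of type $(\mathbf j,\mathbf a)$ is automatically semisimple of a predetermined isomorphism type.

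For part~1, I would fix $x\in\mathscr O_{\mathbf u}$ and parametrize $(\pi^{\mathbf u,\mathbf v})^{-1}(x)$ by the successive choices of $x$-stable submodules $\mathbf V^1\supseteq\mathbf V^2\supseteq\mathbf V^3\supseteq\mathbf V^4$. At each step, the locus of admissible subspaces is a classical Grassmannian: at a sink~$i$ one picks a codimension-$p$ subspace of $V_i$ containing the images of all incoming arrows from vertices already fixed, and dually at a source. Each such Grassmannian is smooth and irreducible, the transitions assemble into a locally trivial tower by $G_{\mathbf V}$-equivariance, and the extra condition $\mathbf n(M_{1,3})=\mathbf v$ cuts out an open (hence smooth, connected) stratum.

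For the inequality~\eqref{eq:SemiSmallness}, I would express all three quantities in terms of dim-vectors and of the Euler form $\langle-,-\rangle_\Omega$. The dimension of $(\pi^{\mathbf u,\mathbf v})^{-1}(x)$ adds up the Grassmannian dimensions from each step; the orbit dimension is standard; and $\dim\widetilde{\mathcal F}_{\mathbf j,\mathbf a}$ is an explicit expression in $(\mathbf j,\mathbf a)$ alone. Using Ringel's formulas~\eqref{eq:RingelFormulas}, the inequality reduces to a bilinear estimate on the multiplicities $n_k$ of the indecomposables $M(\beta_k)$ in $(\mathbf V,x)$. For the equality case, I would apply Corollary~\ref{co:ExtS2&S4} to the three exact sequences
\[
0\to\mathbf V^4\to\mathbf V^3\to M_{3,4}\to0,\qquad
0\to M_{2,3}\to M_{1,3}\to M_{1,2}\to0,\qquad
0\to\mathbf V^4\to\mathbf V^1\to M_{1,4}\to0,
\]
in each of which the submodule is semisimple and concentrated at the sinks $\{2,4\}$. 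These yield membership relations of the form $\mathbf n(M)-\mathbf n(N)-q\mathbf x\in\mathbb N\langle\mathbf w_k\rangle$ that, combined, localize $(\mathbf u,\mathbf v)$ to an explicit polyhedral cone. The inequality should saturate exactly along the rays of that cone spanned by the two distinguished directions that encode the quiver restrictions of $Q_\lambda$ and $P$, which are $\mathbf y$ and $\mathbf z$; this gives $\mathbf u=2(p-s)\mathbf y+s\mathbf z$. The identity $\mathbf v=\mathbf u-p\mathbf y$ then drops out because in the saturating case $M_{1,3}$ picks up exactly one $Q$-restricted indecomposable summand from each $Q_{\lambda_j}$.

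The main obstacle should be this last step: the top and bottom extensions have submodule $S_2^p\oplus S_4^p$ while the middle one has $S_2^{2p}\oplus S_4^{2p}$, so that the admissible combinations of $\mathbf w_k$'s are coupled across the three inclusions in a non-obvious way. A clean Hom-theoretic argument seems unlikely, so I anticipate a case-by-case inspection of the seventeen vectors $\mathbf w_k$, guided by the target decomposition $(\mathbf V,x)\cong Q_{\lambda_1}\oplus\cdots\oplus Q_{\lambda_{2(p-s)}}\oplus P^{\oplus s}$ suggested by Section~\ref{ss:ProofTh(i)}.
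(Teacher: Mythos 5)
There is a genuine gap, on two counts. First, your fiber analysis does not go through as stated: the choices of $\mathbf V^2$ and $\mathbf V^3$ are not a tower of classical Grassmannians of constant dimension, because the admissible subspaces at the sinks must contain the images of the incoming arrows restricted to the previously chosen $\mathbf V^2$, whose dimension varies with that choice; and the condition $\mathbf n(M_{1,3})=\mathbf v$ is only locally closed (not open) in this family, so smoothness and connectedness of the fiber do not follow. The paper's route is different: one chooses $\mathbf V^1$ (two genuine Grassmannians, of dimension $p(u_1-p)+p(u_2-p)$), then chooses $\mathbf V^3$ directly, identified with the open set of surjections $M_{1,5}\to N$ modulo $\Aut(N)$, contributing $\dim\Hom_{\mathbf kQ}(M_{1,5},N)-\dim\End_{\mathbf kQ}(N)$; the remaining subspaces $\mathbf V^2,\mathbf V^4$ are then uniquely determined by $\mathbf V^1,\mathbf V^3$ and contribute nothing. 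This Hom-theoretic term (computed with the matrix $H$ from \eqref{eq:RingelFormulas}) is essential for the dimension count and cannot be replaced by Grassmannian dimensions.

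Second, and more seriously, the heart of the statement --- the inequality \eqref{eq:SemiSmallness} and its equality case --- is not addressed by your cone argument. Membership of $(\mathbf u,\mathbf v)$ in a polyhedral cone cut out by Corollary~\ref{co:ExtS2&S4} is a linear condition on nonemptiness of the fiber; it cannot by itself locate where a quadratic dimension estimate is saturated, and ``saturation along the rays spanned by $\mathbf y$ and $\mathbf z$'' is not something the three exact sequences you list can deliver. What the paper does is apply Corollary~\ref{co:ExtS2&S4} once, to $0\to S_2^{\oplus p}\oplus S_4^{\oplus p}\to(\mathbf V,x)\to M_{0,4}\to0$ (using that $S_2,S_4$ are projective and $S_1,S_3,S_5$ injective to split off the other simples), obtaining $\mathbf u=\mathbf v+p\mathbf y+\boldsymbol\tau\,W$ with $\boldsymbol\tau\in\mathbb N^{17}$; substituting this into the explicit difference $\Delta=-4p^2+\mathbf u\,E\,\mathbf u^T-2p\,\mathbf u\,\mathbf s^T-2(\mathbf u-p\mathbf s-\mathbf v)\,H\,\mathbf v^T$ turns the semismallness inequality into the nonnegativity of an explicit quadratic polynomial in $(\mathbf v,p,\boldsymbol\tau)$, which is then proved by a lengthy completion of squares modulo the five linear relations expressing $|\mathbf v|=p\nu$, and the equality analysis of that sum of squares is what forces $\boldsymbol\tau$ to vanish (except the components absorbed into $v_8,v_9,v_{10}$, which then also vanish) and $\mathbf u=2(p-s)\mathbf y+s\mathbf z$, $\mathbf v=\mathbf u-p\mathbf y$. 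This quadratic positivity argument, which is the actual content of the proposition, is absent from your proposal, and the ``case-by-case inspection of the seventeen vectors $\mathbf w_k$'' you anticipate would only control the linear constraint, not the quadratic estimate.
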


The Lusztig datum in direction $\mathbf i$ of the element $b_{r,s}$
is $r\mathbf y+s\mathbf z$ (see Section~\ref{ss:ProofTh(i)}), so
for $\mathbf u=2(p-s)\mathbf y+s\mathbf z$, the intersection
cohomology sheaf on $\overline{\mathscr O_{\mathbf u}}$ with
coefficients in the trivial local system is $L_{b_{2(p-s),s},\Omega}$.
In view of Proposition~\ref{pr:SemiSmallness}, and since each stratum
$\mathscr O_{\mathbf u}$ is simply connected, the Decomposition
Theorem for semismall maps then implies equation~\eqref{eq:DecompLja}.

The remainder of this section is devoted to the proof of
Proposition~\ref{pr:SemiSmallness}. We have to compute the
difference $\Delta$ between the right and left-hand sides of
\eqref{eq:SemiSmallness} and to show that $\Delta\geq0$.
The assertion about the smoothness and the connectedness of the
fiber of $\pi^{\mathbf u,\mathbf v}$ will be proved on the way.

Lemma~1.6~(c) in~\cite{Lusztig91} gives
$\dim\widetilde{\mathcal F}_{\mathbf j,\mathbf a}=40p^2$.
In addition, $\dim\mathbf E_{\mathbf V,\Omega}=48p^2$.

The formulas \eqref{eq:RingelFormulas} (see Section~\ref{ss:DegOrder})
give a recipe to compute the matrices $H$ and $E$ with entries
\begin{align*}
h_{k,\ell}&=\dim\Hom_{\mathbf kQ}(M(\beta_k),M(\beta_\ell)),\\[2pt]
e_{k,\ell}&=\dim\Ext^1_{\mathbf kQ}(M(\beta_k),M(\beta_\ell)).
\end{align*}

We can then find the dimension of any $G_{\mathbf V}$-orbit
contained in $\mathbf E_{\mathbf V,\Omega}$: by Lemma~1
in~\cite{Crawley-Boevey92}, \S3, we have
$$\dim\mathbf E_{\mathbf V,\Omega}-\dim\mathscr O_{\mathbf u}=
\mathbf u\,E\,\mathbf u^T,$$
where the superscript $T$ denotes the matrix transposition.

Now we fix $\mathbf u$ and $\mathbf v$ in $\mathbb N^{15}$ such
that $|\mathbf u|=2p\nu$ and $|\mathbf v|=p\nu$ and we fix
$x\in\mathscr O_{\mathbf u}$.

If there exists a flag $\mathbf V^\bullet$ such that
$(x,\mathbf V^\bullet)\in\widetilde{\mathcal F}^{\mathbf u,\mathbf v}$,
then we can consider the $\mathbf kQ$-modules
$M_{k,\ell}=(\mathbf V^k/\mathbf V^\ell,x)$. They are related by
$$M_{0,5}\cong M_{1,5}\oplus\bigl(S_2^{\oplus p}\oplus S_4^{\oplus p}\bigr)
\quad\text{and}\quad
M_{0,4}\cong M_{1,3}\oplus\bigl(S_1^{\oplus p}\oplus S_2^{\oplus p}
\oplus S_3^{\oplus2p}\oplus S_4^{\oplus p}\oplus S_5^{\oplus p}\bigr),$$
because $S_2$ and $S_4$ are projective and $S_1$, $S_3$ and $S_5$ are
injective. Setting
\vspace{-8pt}
\begin{center}
\begin{tabular}{>{$}p{1em}<{$\hfill}@{$\ =\ ($}
>{\hfill$}p{.8em}<{$}@{$,\;$}
*{13}{>{\hfill$}p{.8em}<{$}@{$,\;$}}
>{\hfill$}p{.8em}<{$}@{$\;)$,}}
\mathbf s&1&1&0&0&0&0&0&0&0&0&0&0&0&0&0\\[1pt]
\mathbf t&1&1&0&0&0&0&0&0&0&0&0&0&1&2&1
\end{tabular}
\end{center}
\vspace{-8pt}
we thus have $\mathbf n(M_{1,5})=\mathbf u-p\mathbf s$ and
$\mathbf n(M_{0,4})=\mathbf v+p\mathbf t$.
Further, Corollary~\ref{co:ExtS2&S4} asserts that
$\mathbf u-\mathbf n(M_{0,4})-p\mathbf x$ belongs to the
$\mathbb N$-span of the vectors $\mathbf w_k$. Denoting by
$W$ the matrix whose lines are the vectors $\mathbf w_k$, we
deduce the existence of a row vector
$\boldsymbol\tau=(\tau_1,\ldots,\tau_{17})$ in $\mathbb N^{17}$
such that $\mathbf u-\mathbf v-p\mathbf y=\boldsymbol\tau\,W$.

The choice of a flag $\mathbf V^\bullet$ in the fiber
$(\pi^{\mathbf u,\mathbf v})^{-1}(x)$ can be decomposed in three
steps: first the choice of $\mathbf V^1$, then the choice
of $\mathbf V^3$, and finally the choice of $\mathbf V^2$ and
$\mathbf V^4$.

We thus begin with $\mathbf V^1$. The vector space $\mathbf V^1_2$
has codimension $p$ in $\mathbf V_2$ and contains the image of
$x_{h_1}$ and $x_{h_2}$. To choose it therefore amounts to choose
a codimension $p$ subspace in a space of dimension $u_1$; in other
words, to a point in a Grassmannian of dimension $p(u_1-p)$.
Likewise the choice of $\mathbf V^1_4$ amounts to the choice of
a point in a Grassmannian of dimension $p(u_2-p)$.
(If $p$ is larger than $u_1$ or $u_2$, then the fiber
$(\pi^{\mathbf u,\mathbf v})^{-1}(x)$ is empty.) The choice
of $\mathbf V^1$ therefore contributes a smooth connected variety
of dimension $p(u_1+u_2)-2p^2$ to the fiber.

Now suppose that $\mathbf V^1$ has been chosen, and pick
a $\mathbf kQ$-module $N$ such that $\mathbf n(N)=\mathbf v$.
An $x$-stable $I$-graded $\mathbf k$-vector subspace $\mathbf V^3$
defines a $\mathbf kQ$-module $M_{1,3}=(\mathbf V^1/\mathbf V^3,x)$.
The datum of a $\mathbf V^3$ such that $\mathbf n(M_{1,3})=\mathbf v$
is equivalent to the datum of a surjective morphism $M_{1,5}\to N$,
up to composition with an automorphism of $N$. Surjective morphisms
form an open dense subset in $\Hom_{\mathbf kQ}(M_{1,5},N)$
(or do not exist at all, in which case the fiber
$(\pi^{\mathbf u,\mathbf v})^{-1}(x)$ is empty). Therefore the choice
of $\mathbf V^3$ contributes a smooth connected variety of dimension
$$\dim\Hom_{\mathbf kQ}(M_{1,5},N)-\dim\Hom_{\mathbf kQ}(N,N)
=(\mathbf u-p\mathbf s)\,H\,\mathbf v^T-\mathbf v\,H\,\mathbf v^T$$
to the fiber.

Lastly, we notice that the choice of $\mathbf V^1$ and $\mathbf V^3$
fully determines $\mathbf V^2$ and $\mathbf V^4$:
$$\mathbf V^2_i=
\begin{cases}
\mathbf V^3_i&\text{if $i\in\{1,3,5\}$,}\\
\mathbf V^1_i&\text{if $i\in\{2,4\}$,}
\end{cases}
\qquad\qquad
\mathbf V^4_i=
\begin{cases}
0&\text{if $i\in\{1,3,5\}$,}\\
\mathbf V^3_i&\text{if $i\in\{2,4\}$.}
\end{cases}$$
In other words, the third step does not contribute further to the fiber.

Taking every contribution into account, we find that the difference
between the two sides of~\eqref{eq:SemiSmallness} is equal to
$$\Delta=-4p^2+\mathbf u\,E\,\mathbf u^T-2p\,\mathbf u\,\mathbf s^T
-2(\mathbf u-p\mathbf s-\mathbf v)\,H\,\mathbf v^T.$$
In this expression, we substitute
$\mathbf u=\mathbf v+p\mathbf y+\boldsymbol\tau\,W$. Then
$\Delta$ is a polynomial of degree $2$ in the variables $\mathbf v$,
$p$, and $\boldsymbol\tau$. Our aim is to show that $\Delta\geq0$
under the assumptions that the variables are nonnegative and that
$|\mathbf v|=p\nu$.

The equation $|\mathbf v|=p\nu$ is equivalent to the system
\begin{align*}
p-(v_3+v_6+v_9+v_{12}+v_{15})&=0,\\
2p-(v_1+v_3+v_4+v_6+v_7+v_9+v_{10}+v_{12})&=0,\\
2p-(v_4+v_6+v_7+v_8+v_9+v_{10}+v_{11}+v_{12}+v_{14})&=0,\\
2p-(v_2+v_4+v_5+v_6+v_7+v_8+v_9+v_{11})&=0,\\
p-(v_5+v_7+v_9+v_{11}+v_{13})&=0.
\end{align*}
We denote the left-hand sides of these equations by $L_1$, \dots,
$L_5$, from top to bottom.

Let us write $\Delta=\Delta_0+\Delta_1+\Delta_2$, where $\Delta_d$
collects the terms of degree $d$ in the variables $\tau_k$. Then
$$\Delta_0
=\mathbf v\,E\,\mathbf v^T+p\,\bigl[\mathbf y\,(E+E^T)
-2\mathbf s-2(\mathbf y-\mathbf s)\,H\bigr]\,\mathbf v^T
+p^2(-4+\mathbf y\,E\,\mathbf y^T-2\,\mathbf y\,\mathbf s^T).$$
If we add
$$L_1(L_1-L_2+v_{15})+L_2(L_2+v_1+v_{12})+L_3(L_3-L_2-L_4+v_{14})
+L_4(L_4+v_2+v_{11})+L_5(L_5-L_4+v_{13})$$
to $\Delta_0$, we get, after a lengthy but straightforward
calculation
\begin{align*}
&(p-v_1)(v_{12}-p)+(p-v_2)(v_{11}-p)\\
&+(v_3+v_4+v_6)(v_6+v_8+v_9)+(v_4+v_5+v_7)(v_7+v_9+v_{10})\\
&+(v_3^2-v_3v_8+v_8^2)+(v_4^2-v_4v_9+v_9^2)+(v_5^2-v_5v_{10}+v_{10}^2).
\end{align*}
The first term is
\begin{multline*}
\left(\frac{v_{12}-v_1}2\right)^2
-\left(p-\frac{v_1+v_{12}}2\right)^2\\
\equiv
\left(\frac{v_{12}-v_1}2\right)^2
-\left(\frac{v_3+v_4+v_6}2+\frac{v_7+v_9+v_{10}}2\right)^2
\pmod{L_2}
\end{multline*}
and the second term is
\begin{multline*}
\left(\frac{v_{11}-v_2}2\right)^2
-\left(p-\frac{v_2+v_{11}}2\right)^2\\
\equiv
\left(\frac{v_{11}-v_2}2\right)^2
-\left(\frac{v_4+v_5+v_7}2+\frac{v_6+v_8+v_9}2\right)^2
\pmod{L_4}.
\end{multline*}
Modulo the relations $L_i$, $\Delta_0$ is thus congruent to
\begin{align*}
\widetilde\Delta_0=
&\left(\frac{v_{12}-v_1}2\right)^2
+\left(\frac{v_{11}-v_2}2\right)^2\\
&-\left(\frac{v_3+v_4+v_6}2+\frac{v_7+v_9+v_{10}}2\right)^2
-\left(\frac{v_4+v_5+v_7}2+\frac{v_6+v_8+v_9}2\right)^2\\[2pt]
&+(v_3+v_4+v_6)(v_6+v_8+v_9)+(v_4+v_5+v_7)(v_7+v_9+v_{10})\\[2pt]
&+\left(\frac{v_3-v_8}2+\frac{v_4-v_9}2\right)^2
+\left(\frac{v_4-v_9}2+\frac{v_5-v_{10}}2\right)^2
+\left(\frac{v_3-v_8}2-\frac{v_5-v_{10}}2\right)^2\\
&+\left(\frac{v_3-v_8}2-\frac{v_4-v_9}2+\frac{v_5-v_{10}}2\right)^2
+\left(\frac{v_3+v_8}2\right)^2
+\left(\frac{v_4+v_9}2\right)^2
+\left(\frac{v_5+v_{10}}2\right)^2.
\end{align*}
A final transformation yields
\begin{align*}
\widetilde\Delta_0=
&\left(\frac{v_{12}-v_1}2\right)^2
+\left(\frac{v_{11}-v_2}2\right)^2\\
&+\frac12\left(v_6-v_7+\frac{v_3-v_5+v_8-v_{10}}2\right)^2
+\frac12\left(\frac{v_3-v_8}2-\frac{v_5-v_{10}}2\right)^2\\
&+\left(\frac{v_3-v_8}2-\frac{v_4-v_9}2+\frac{v_5-v_{10}}2\right)^2
+\left(\frac{v_3+v_8}2\right)^2
+\left(\frac{v_4+v_9}2\right)^2
+\left(\frac{v_5+v_{10}}2\right)^2.
\end{align*}

We now turn to
$$\Delta_1=\boldsymbol\tau\,W(E+E^T-2H)\,\mathbf v^T+
p\,\boldsymbol\tau\,W\bigl[(E+E^T)\,\mathbf y^T-2\mathbf s^T\bigr].$$
We add
\begin{align*}
&\tau_1(L_1-2L_2+L_3)+\tau_2(L_3-2L_4+L_5)
-\tau_3L_1-(\tau_4+\tau_5)L_2\\
&-(\tau_6+\tau_7+\tau_8+\tau_9)L_3
-(\tau_{10}+\tau_{11})L_4-\tau_{12}L_5
-\tau_{16}(L_3+L_4)/4-\tau_{17}(L_2+L_3)/4
\end{align*}
to $\Delta_1$, and write the result $\widetilde\Delta_1$ in the
form $\boldsymbol\tau\,\mathbf d^T$. The components of $\mathbf d$
are given below.
\begin{xalignat*}2
d_1&=v_1+v_{12}&
d_4&=v_1+v_4+v_8+v_{11}+2v_{12}+v_{14}+v_{15}\\
d_2&=v_2+v_{11}&
d_5&=v_1+v_8+v_9+v_{11}+2v_{12}+v_{14}+v_{15}\\
d_3&=v_8+v_{11}+v_{12}+v_{14}+v_{15}&
d_6&=v_8+v_9+2v_{11}+2v_{12}+2v_{14}+v_{15}\\
&&d_7&=v_4+v_8+2v_{11}+2v_{12}+2v_{14}+v_{15}\\
d_{12}&=v_{10}+v_{11}+v_{12}+v_{13}+v_{14}&
d_8&=v_4+v_{10}+2v_{11}+2v_{12}+v_{13}+2v_{14}\\
d_{13}&=-v_8+v_{13}&
d_9&=v_9+v_{10}+2v_{11}+2v_{12}+v_{13}+2v_{14}\\
d_{14}&=-v_9+v_{14}&
d_{10}&=v_2+v_9+v_{10}+2v_{11}+v_{12}+v_{13}+v_{14}\\
d_{15}&=-v_{10}+v_{15}&
d_{11}&=v_2+v_4+v_{10}+2v_{11}+v_{12}+v_{13}+v_{14}\\[-26pt]
\end{xalignat*}
\begin{align*}
d_{16}&=\frac{v_2}4+\frac{v_4}2+\frac{v_5}4-\frac{v_6}2+\frac{v_7}2
+\frac{v_8}2-\frac{v_9}2+\frac{v_{10}}4+\frac{3v_{11}}2+\frac{v_{12}}4
+\frac{5v_{14}}4\\[4pt]
d_{17}&=\frac{v_1}4+\frac{v_3}4+\frac{v_4}2+\frac{v_6}2-\frac{v_7}2
+\frac{v_8}4-\frac{v_9}2+\frac{v_{10}}2+\frac{v_{11}}4+\frac{3v_{12}}2
+\frac{5v_{14}}4
\end{align*}
We thus have
\begin{align*}
\widetilde\Delta_1\geq
&\left(\frac{\tau_6+\tau_7+\tau_{16}}2-\tau_{13}\right)v_8
-\left(\frac{\tau_{16}+\tau_{17}}2+\tau_{14}\right)v_9
+\left(\frac{\tau_8+\tau_9+\tau_{17}}2-\tau_{15}\right)v_{10}\\[2pt]
&-\frac{\tau_{16}-\tau_{17}}2\left(v_6-v_7
+\frac{v_3-v_5+v_8-v_{10}}2\right).
\end{align*}

Last, $\Delta_2=\boldsymbol\tau\,W\,E\,W^T\,\boldsymbol\tau^T$ is
the quadratic form given by
\begin{align*}
\Delta_2=\;&\tau_1^2+\tau_2^2+\tau_3^2+\tau_4^2+\tau_5^2+\tau_6^2
+\tau_7^2+\tau_8^2+\tau_9^2+\tau_{10}^2+\tau_{11}^2+\tau_{12}^2
+\tau_{13}^2+\tau_{14}^2+\tau_{15}^2\\
&+\tau_{16}^2+\tau_{17}^2+\tau_1\tau_3+\tau_1\tau_4+\tau_1\tau_5
+\tau_1\tau_6+\tau_1\tau_7+\tau_1\tau_{14}+\tau_1\tau_{15}
+\tau_1\tau_{16}+\tau_2\tau_8\\
&+\tau_2\tau_9+\tau_2\tau_{10}+\tau_2\tau_{11}+\tau_2\tau_{12}
+\tau_2\tau_{13}+\tau_2\tau_{14}+\tau_2\tau_{17}+\tau_3\tau_4
+\tau_3\tau_5+\tau_3\tau_{14}\\
&+\tau_3\tau_{16}+\tau_4\tau_5+\tau_4\tau_7-\tau_4\tau_9
-\tau_4\tau_{10}+\tau_4\tau_{15}+\tau_5\tau_6-\tau_5\tau_8
-\tau_5\tau_{11}+\tau_5\tau_{14}\\
&+\tau_5\tau_{15}+\tau_6\tau_7-\tau_6\tau_8-\tau_6\tau_{11}
-\tau_6\tau_{13}+\tau_6\tau_{14}+\tau_6\tau_{15}+\tau_6\tau_{16}
-\tau_7\tau_9-\tau_7\tau_{10}\\
&-\tau_7\tau_{13}+\tau_7\tau_{15}+\tau_7\tau_{16}+\tau_8\tau_9
+\tau_8\tau_{11}+\tau_8\tau_{13}-\tau_8\tau_{15}+\tau_8\tau_{17}
+\tau_9\tau_{10}+\tau_9\tau_{13}\\
&+\tau_9\tau_{14}-\tau_9\tau_{15}+\tau_9\tau_{17}+\tau_{10}\tau_{11}
+\tau_{10}\tau_{12}+\tau_{10}\tau_{13}+\tau_{10}\tau_{14}
+\tau_{11}\tau_{12}+\tau_{11}\tau_{13}\\
&+\tau_{12}\tau_{14}+\tau_{12}\tau_{17}-\tau_{13}\tau_{16}
+\tau_{14}\tau_{16}+\tau_{14}\tau_{17}-\tau_{15}\tau_{17},
\end{align*}
and therefore
\begin{align*}
\Delta_2\geq\;&\tau_1^2+\tau_2^2+\tau_3^2+\tau_4^2+\tau_5^2+\tau_6^2
+\tau_7^2+\tau_8^2+\tau_9^2+\tau_{10}^2+\tau_{11}^2+\tau_{12}^2
+\tau_{13}^2+\tau_{14}^2+\tau_{15}^2\\
&+\tau_{16}^2+\tau_{17}^2
-\tau_{13}(\tau_6+\tau_7+\tau_{16})
-\tau_{15}(\tau_8+\tau_9+\tau_{17})
-(\tau_4+\tau_7)(\tau_9+\tau_{10})\\
&-(\tau_5+\tau_6)(\tau_8+\tau_{11})
+\tau_{16}\left(\tau_{14}+\frac{\tau_6+\tau_7}2\right)
+\tau_{17}\left(\tau_{14}+\frac{\tau_8+\tau_9}2\right)\\
&+(\tau_4\tau_5+\tau_5\tau_6+\tau_6\tau_7+\tau_4\tau_7)
+(\tau_8\tau_9+\tau_9\tau_{10}+\tau_{10}\tau_{11}+\tau_8\tau_{11}).
\end{align*}

Modulo the relations $L_i$, $\Delta$ is congruent to
$\widetilde\Delta=\widetilde\Delta_0+\widetilde\Delta_1+\Delta_2$,
and we have
\begin{align*}
\widetilde\Delta\geq\;&
\left(\frac{v_{12}-v_1}2\right)^2
+\left(\frac{v_{11}-v_2}2\right)^2
+\frac12\left(v_6-v_7+\frac{v_3-v_5+v_8-v_{10}}2
-\frac{\tau_{16}-\tau_{17}}2\right)^2\\
&+\frac12\left(\frac{v_3-v_8}2-\frac{v_5-v_{10}}2\right)^2
+\left(\frac{v_3-v_8}2-\frac{v_4-v_9}2+\frac{v_5-v_{10}}2\right)^2\\
&+\frac{v_3^2+v_4^2+v_5^2}4+\frac{v_3v_8+v_4v_9+v_5v_{10}}2
+\left(\frac{v_8}2+\frac{\tau_6+\tau_7+\tau_{16}}2-\tau_{13}\right)^2\\
&+\left(\frac{v_9}2-\frac{\tau_{16}+\tau_{17}}2-\tau_{14}\right)^2
+\left(\frac{v_{10}}2+\frac{\tau_8+\tau_9+\tau_{17}}2-\tau_{15}\right)^2\\
&+\frac{(\tau_{16}-\tau_{17})^2}8+\frac{\tau_{16}^2+\tau_{17}^2}4
+\tau_1^2+\tau_2^2+\tau_3^2+\tau_{12}^2
+\left(\frac{\tau_6+\tau_7}2\right)^2
+\left(\frac{\tau_8+\tau_9}2\right)^2\\
&+\frac12\bigl((\tau_4+\tau_5)^2+(\tau_{10}+\tau_{11})^2
+(\tau_4+\tau_7-\tau_9-\tau_{10})^2
+(\tau_5+\tau_6-\tau_8-\tau_{11})^2\bigr)\\
\geq\;&0.
\end{align*}

This concludes the proof of equation~\eqref{eq:SemiSmallness}.

To finish the proof of Proposition~\ref{pr:SemiSmallness}, it
remains to study the case of equality. From the minoration above,
one easily sees that $\widetilde\Delta=0$ is possible only if
all the $\tau_k$ vanish, except perhaps $\tau_{13}$, $\tau_{14}$
and $\tau_{15}$, and if
\begin{gather*}
v_1=v_{12},\quad v_2=v_{11},\quad v_3=v_4=v_5=0,\quad v_6=v_7,\\[4pt]
v_8=v_9/2=v_{10},\quad v_8=2\tau_{13},\quad v_9=2\tau_{14},\quad
v_{10}=2\tau_{15}.
\end{gather*}
Substituting into $L_i=0$, we then get
$$v_1=v_2=v_{11}=v_{12},\quad
v_8=v_9=v_{10}=v_{13}=v_{14}=v_{15}=0,\quad
p=v_1+v_6,\quad\boldsymbol\tau=0.$$
Therefore $\mathbf v=(p-2v_6)\mathbf y+v_6\mathbf z$ and
$\mathbf u=\mathbf v+p\mathbf y=2(p-v_6)\mathbf y+v_6\mathbf z$,
as desired.

The proof of the converse (that is, if $\mathbf u$ and $\mathbf v$
have the required form, then
$\widetilde{\mathcal F}^{\mathbf u,\mathbf v}\neq\varnothing$
and \eqref{eq:SemiSmallness} in an equality) is easy and left to
the reader.

\section{The canonical basis and the quantum Frobenius morphism}
\label{se:CanFrob}
In this section, we study the compatibility of the quantum Frobenius
morphism and its splitting with bases of canonical type. These
morphisms require the use of the quantum group, so from now on,
$\mathbf f$ denotes the quantum deformation of the algebra defined
in Section~\ref{ss:BasesOCT}. In addition, we also need a basis that
lifts to the quantum group and can be specialized to a quantum root
of unity. This invites us to restrict our attention to the canonical
basis.

\subsection{Background on the quantum Frobenius morphism}
We follow the notation set up in Lusztig's book~\cite{Lusztig93},
and in particular assume that the conditions (a) and (b) in
Section~35.1.2 of that book hold true.

Let $(d_i)$ be a family of positive integers such that the
matrix $(d_ia_{i,j})$ is symmetric. Let $v$ be an indeterminate.
For $n\in\mathbb N$ and $i\in I$, we define the Gaussian number
$[n]_i$ and the Gaussian factorial $[n]_i!$ as in
Section~\ref{ss:LusztigData}. From these data, we define $\mathbf f$
as the $\mathbb Q(v)$-algebra generated by elements $\theta_i$,
for $i\in I$, submitted to the relations~\eqref{eq:DefAlgF}, in
which the ordinary factorials $p!$ and $q!$ are replaced by their
Gaussian counterparts $[p]_i!$ and $[q]_i!$.

We set $\mathcal A=\mathbb Z[v,v^{-1}]$. It is known that the
$\mathcal A$-subalgebra of $\mathbf f$ generated by the divided
powers $\theta_i^{(n)}=\theta_i^n/[n]_i!$ is an $\mathcal A$-form
${}_{\mathcal A}\mathbf f$ in $\mathbf f$. We can then specialize
the parameter $v$ to any invertible element in a commutative ring
$R$ by a base change $R\otimes_{\mathcal A}{}_{\mathcal A}\mathbf f$
(\cite{Lusztig93}, Section~31.1). Thus for instance, the algebra
$\mathbf f$ from Section~\ref{ss:BasesOCT} is the specialization over
$\mathbb Q$ at the value $v=1$. The canonical basis is in fact an
$\mathcal A$-basis of ${}_{\mathcal A}\mathbf f$, so it induces an
$R$-basis in each specialization ${}_R\mathbf f$.

Let $\ell$ be a positive integer. For $i\in I$, let $\ell_i$ be the
smallest positive integer such that $\ell_id_i\in\ell\mathbb Z$.
We define a new symmetrizable Cartan matrix $A^*=(a_{i,j}^*)$ by
$a_{i,j}^*=a_{i,j}\ell_j/\ell_i$; setting $d_i^*=d_i\ell_i^2$,
the matrix $(d_i^*a_{i,j}^*)$ is symmetric. We then have a
$\mathbb Q(v)$-algebra $\mathbf f^*$ and an $\mathcal A$-form
${}_{\mathcal A}\mathbf f^*$. The simple roots and coroots for the
starred Cartan datum are chosen to be
$\alpha_i^*=\ell_i\alpha_i$ and $(\alpha_i^*)^\vee=\alpha_i^\vee/\ell_i$.

Let $\Phi_{2\ell}$ be the $2\ell$-th cyclotomic polynomial,
and let $R=\mathbb Q[\zeta]/(\Phi_{2\ell}(\zeta))$. Let
${}_R\mathbf f$ and ${}_R\mathbf f^*$ be the specializations
of $\mathbf f$ and $\mathbf f^*$ over $R$ at the value $v=\zeta$.
There is then an algebra homomorphism
$Fr_\ell:{}_R\mathbf f\to{}_R\mathbf f^*$ that maps the generator
$\theta_i^{(n)}$ to $\theta_i^{(n/\ell_i)}$ if $n$ is a multiple of
$\ell_i$ and to $0$ otherwise; this morphism is called the quantum
Frobenius map. In the other direction, there is an algebra
homomorphism $Fr'_\ell:{}_R\mathbf f^*\to{}_R\mathbf f$ that maps
$\theta_i^{(n)}$ to $\theta_i^{(n\ell_i)}$; this map is called the
quantum Frobenius splitting.

Let $B(-\infty)^*$ be the analogue of the crystal $B(-\infty)$ for the
Cartan matrix $A^*$ and the algebra $\mathbf f^*$. By construction,
the Frobenius splitting $Fr'_\ell$ has some kind of compatibility
with the conditions \ref{it:BOCTa}--\ref{it:BOCTc} in the definition
of a basis of canonical type. One may thus expect the existence of a
map $S_\ell:B(-\infty)^*\to B(-\infty)$ that reflects the action of
$Fr'_\ell$ at the level of the crystals. Such a map $S_\ell$ has been
constructed by Kashiwara (\cite{Kashiwara96}, Theorems~3.2 and~5.1);
it satisfies
\begin{gather*}
\wt(S_\ell(b))=\wt(b),\qquad
\varepsilon_i(S_\ell(b))=\ell_i\varepsilon_i(b),\qquad
\varphi_i(S_\ell(b))=\ell_i\varphi_i(b),\\[4pt]
S_\ell(\tilde e_ib)=\tilde e_i^{\ell_i}S_\ell(b),\qquad
S_\ell\bigl(\tilde f_ib\bigr)=\tilde f_i^{\ell_i}S_\ell(b).
\end{gather*}
These equations take into account the convention that the
simple roots and coroots for the starred root datum are
given by $\alpha_i^*=\ell_i\alpha_i$ and
$(\alpha_i^*)^\vee=\alpha_i^\vee/\ell_i$.

\subsection{Compatibility up to a filtration}
\label{ss:CompUptoFilt}
As promised, we now study the compatibility of $Fr_\ell$ and $Fr'_\ell$
with the canonical bases of ${}_R\mathbf f$ and ${}_R\mathbf f^*$.
The best compatibility we could hope for would be
\begin{equation}
\label{eq:CompFrobCan}
Fr_\ell(G(b'))=
\begin{cases}
G(b'')&\text{if $b'=S_\ell(b'')$}\\
0&\text{if $b'\notin\im S_\ell$}
\end{cases}
\qquad\text{and}\qquad
Fr'_\ell(G(b''))=G(S_\ell(b'')),
\end{equation}
where $G(b')$ and $G(b'')$ denote the elements in the canonical
bases of ${}_R\mathbf f$ and ${}_R\mathbf f^*$ that correspond to
$b'\in B(-\infty)$ and $b''\in B(-\infty)^*$.

This property holds true in types $A_1$, $A_2$, $A_3$ and $B_2$.
I owe this nice observation to Littelmann, who checked it using
the explicit formulas for the canonical basis given by Lusztig
(\cite{Lusztig90}, Section~3.4) and Xi~\cite{Xi99a,Xi99b}. Alas,
\eqref{eq:CompFrobCan} fails in types $A_5$ and~$D_4$, as we will see
in Section~\ref{ss:FrobCexA5D4}.

One can however hope to restore the compatibility by working with
filtrations, so as to be able to neglect undesired terms in
the expansion of $Fr_\ell(G(b'))$ and $Fr'_\ell(G(b''))$. To this
aim, we must study how these terms compare with the expected one.

The next proposition is a crude result, whose proof relies solely
on the property that the canonical basis is of canonical type.

\begin{proposition}
\label{pr:FrobCanStr}
Let $\ell\geq1$ and let $(b',b'')\in(B(-\infty)^*)^2$.
\begin{enumerate}
\item
\label{it:FCSa}
In order that $G(b'')$ actually occurs in the expansion of
$Fr_\ell(G(S_\ell(b')))$ on the canonical basis of ${}_R\mathbf f^*$,
it is necessary that $b'\leq_{\str}b''$. Moreover, $G(b')$ occurs
with coefficient $1$ in the expansion of $Fr_\ell(G(S_\ell(b')))$.
\item
\label{it:FCSb}
In order that $G(S_\ell(b'))$ actually occurs in the expansion of
$Fr'_\ell(G(b''))$ on the canonical basis of ${}_R\mathbf f$,
it is necessary that $b''\leq_{\str}b'$. Moreover, $G(S_\ell(b''))$
occurs with coefficient $1$ in the expansion of $Fr_\ell(G(b''))$.
\end{enumerate}
\end{proposition}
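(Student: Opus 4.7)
The plan is to prove both parts by a common strategy, combining the canonical-type axioms from Section~\ref{ss:BasesOCT} (especially \ref{it:BOCTc} and \ref{it:BOCTd}) with the explicit action of the Frobenius maps on divided powers. For part~\ref{it:FCSa}, write $Fr_\ell(G(S_\ell(b')))=\sum_{b''}c_{b''}G(b'')$ in the canonical basis of ${}_R\mathbf{f}^*$; the task is to show that $c_{b''}=0$ unless $b'\leq_{\str}b''$, and that $c_{b'}=1$.

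The first step is to produce, for each $i\in I$, the crude inequality $\varphi_i(b'')\geq\varphi_i(b')$. By the Kashiwara identity $\varphi_i(S_\ell(b'))=\ell_i\varphi_i(b')$ and axiom~\ref{it:BOCTc}, the element $G(S_\ell(b'))$ lies in the right ideal $\theta_i^{(\ell_i\varphi_i(b'))}\mathbf{f}$. Since $Fr_\ell$ is an algebra homomorphism sending $\theta_i^{(n)}$ to $\theta_i^{(n/\ell_i)}$ if $\ell_i\mid n$ and to zero otherwise, the image of $\theta_i^{(\ell_i\varphi_i(b'))}\mathbf{f}$ is contained in $\theta_i^{(\varphi_i(b'))}\mathbf{f}^*$, while the deeper ideal $\theta_i^{(\ell_i\varphi_i(b')+1)}\mathbf{f}$ is annihilated (every element has the form $\theta_i^{(\ell_i\varphi_i(b')+1)}y$, and $Fr_\ell$ kills the leading factor since $\ell_i\nmid\ell_i\varphi_i(b')+1$). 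The first claim then follows by axiom~\ref{it:BOCTc} applied in $\mathbf{f}^*$.

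The second and key step is to obtain the exact identity
$$Fr_\ell(G(S_\ell(b')))=\theta_i^{(\varphi_i(b'))}\,Fr_\ell(G(S_\ell(\tilde f_i^{\max}b'))),$$
which is a direct consequence of the congruence $G(S_\ell(b'))\equiv\theta_i^{(\ell_i\varphi_i(b'))}G(S_\ell(\tilde f_i^{\max}b'))$ modulo $\theta_i^{(\ell_i\varphi_i(b')+1)}\mathbf{f}$ (given by the crystal recipe of Section~\ref{ss:CrysBInfinity} together with $\tilde f_i^{\max}S_\ell(b')=S_\ell(\tilde f_i^{\max}b')$) and of the vanishing noted in the first step. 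Combining this identity with axiom~\ref{it:BOCTd} shows that, for any $b''$ with $\varphi_i(b'')=\varphi_i(b')$, the coefficient of $G(b'')$ on the left equals the coefficient of $G(\tilde f_i^{\max}b'')$ in $Fr_\ell(G(S_\ell(\tilde f_i^{\max}b')))$. This is exactly the invariance of $c_{b''}$ under the $\tilde e_i$- and $\tilde f_i$-moves of the relation~$\approx$; invariance under the $\sigma$-move is handled separately via the commutations $\sigma\circ S_\ell=S_\ell\circ\sigma$, $\sigma\circ Fr_\ell=Fr_\ell\circ\sigma$ and $\sigma(G(b))=G(\sigma(b))$ (the last being axiom~\ref{it:BOCTe}).

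With the invariance of the coefficients in hand, the proof closes by induction on the weight of~$b'$: the base case $b'=1$ reduces to $Fr_\ell(1)=1$, and for $b'\ne1$ Lemma~\ref{le:HighWt} supplies an $i$ with $\varphi_i(b')>0$, so the inductive hypothesis applied to $\tilde f_i^{\max}b'$ transports both $c_{b'}=1$ and the implication $c_{b''}\ne0\Rightarrow b'\leq_{\str}b''$ up to $b'$. Part~\ref{it:FCSb} is proved by the dual argument, the only substantive change being that the filtration-compatibility is now $Fr'_\ell(\theta_i^{(p)}\mathbf{f}^*)\subseteq\theta_i^{(\ell_ip)}\mathbf{f}$, a direct consequence of $Fr'_\ell(\theta_i^{(p)})=\theta_i^{(p\ell_i)}$. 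I expect the main subtlety to lie in the exact identity displayed above: although the cancellation is elementary in hindsight, it depends on the observation that every element of $\theta_i^{(\ell_i\varphi_i(b')+1)}\mathbf{f}$, viewed as a right ideal, is a product $\theta_i^{(\ell_i\varphi_i(b')+1)}y$ and is hence killed by $Fr_\ell$; once this is cleanly set up, everything else is formal manipulation of the crystal recipe and of axiom~\ref{it:BOCTd}.
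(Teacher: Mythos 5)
Your overall strategy is the same as the paper's (peel off $\theta_i^{(\varphi_i(b'))}$ using $\tilde f_i^{\max}S_\ell(b')=S_\ell(\tilde f_i^{\max}b')$, show the coefficients are invariant under the moves $\approx$, treat the $\sigma$-move via the commutations with $Fr_\ell$ and $S_\ell$, and conclude from the definition of $\leq_{\str}$), but your key step is not justified and, as stated, is wrong. Writing $p=\ell_i\varphi_i(b')$, you claim that $Fr_\ell$ annihilates the deeper ideal because ``every element has the form $\theta_i^{(p+1)}y$'', and you deduce the \emph{exact} identity $Fr_\ell(G(S_\ell(b')))=\theta_i^{(\varphi_i(b'))}Fr_\ell(G(S_\ell(\tilde f_i^{\max}b')))$. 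This annihilation claim fails: the right ideal $\theta_i^{p+1}\mathbf f$ contains, for instance, $\theta_i^{(p+\ell_i)}$ (and more generally all $\theta_i^{(n)}x$ with $\ell_i\mid n$, $n>p$), and these are \emph{not} killed by $Fr_\ell$; they merely map into $\theta_i^{\varphi_i(b')+1}{}_R\mathbf f^*$. Equivalently, if you read $\theta_i^{(p+1)}{}_R\mathbf f$ literally as the set of products $\theta_i^{(p+1)}y$, then at a root of unity this is strictly smaller than the right ideal in which the crystal recipe of Section~\ref{ss:CrysBInfinity} places the error terms, so the congruence you invoke does not live modulo a subspace that $Fr_\ell$ kills. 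Hence the displayed exact identity does not follow, and there is no reason for it to hold (indeed, expecting exactness of this kind is essentially what the counterexamples of Section~\ref{ss:FrobCexA5D4} rule out).

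The repair is what the paper does: instead of the $v=1$ axioms \ref{it:BOCTc}--\ref{it:BOCTd}, one must use the integral refinement (Theorem~14.3.2 in \cite{Lusztig93}), which writes $\theta_i^{(\varphi_i(b))}G(\tilde f_i^{\max}b)=G(b)+\sum_{n>\varphi_i(b)}\theta_i^{(n)}x_n$ with $x_n$ in the $\mathcal A$-form, so that $Fr_\ell$ can be applied term by term; one then only gets a \emph{congruence} $Fr_\ell(G(S_\ell(b')))\equiv\theta_i^{(\varphi_i(b'))}Fr_\ell(G(S_\ell(\tilde f_i^{\max}b')))$ modulo $\theta_i^{\varphi_i(b')+1}{}_R\mathbf f^*$. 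This weaker statement is all you need: it still gives the crude inequality $\varphi_i(b'')\geq\varphi_i(b')$ when the coefficient is nonzero, and it still lets you compare the coefficient of $G(b'')$ with that of $G(\tilde f_i^{\max}b'')$ whenever $\varphi_i(b'')=\varphi_i(b')$, since you are only ignoring basis vectors with strictly larger $\varphi_i$. With that substitution your remaining steps (the $\sigma$-move, the reduction to $1$ giving the coefficient $1$, and the analogous argument for $Fr'_\ell$) go through as in the paper; note also that your appeal to the Section~\ref{ss:BasesOCT} axioms alone is insufficient throughout, because $Fr_\ell$ and $Fr'_\ell$ are only defined after specializing the integral form at a root of unity.
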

\begin{proof}
To avoid confusion with the star in the notation $\mathbf f^*$,
it will be convenient to denote duality with a superscript $\vee$.
Thus $({}_R\mathbf f)^\vee=\Hom_R({}_R\mathbf f,R)$ and
$({}_R\mathbf f^*)^\vee=\Hom_R({}_R\mathbf f^*,R)$. The elements
in the dual canonical bases of these algebras are denoted by
$G(b)^\vee$, where $b$ is in $B(-\infty)$ or in $B(-\infty)^*$,
respectively.

Let $\ell\geq1$ and let $(b,b'')\in B(-\infty)\times B(-\infty)^*$.
Choose $i\in I$ and set $k=\lfloor\varphi_i(b)/\ell_i\rfloor$, the
largest integer smaller than or equal to $\varphi_i(b)/\ell_i$.
By Theorem~14.3.2 in~\cite{Lusztig93}, there are elements
$x_n\in{}_R\mathbf f$ such that
$$\theta_i^{(\varphi_i(b))}\,G\bigl(\tilde f_i^{\max}b\bigr)=
G(b)+\sum_{n>\varphi_i(b)}\theta_i^{(n)}x_n.$$
Applying $Fr_\ell$ to this equation, we obtain that modulo
$\theta_i^{k+1}{}_R\mathbf f^*$,
$$Fr_\ell(G(b))\equiv
\begin{cases}
\theta_i^{(k)}Fr_\ell\bigl(G\bigl(\tilde f_i^{\max}b\bigr)\bigr)
&\text{if $\varphi_i(b)=k\ell_i$,}\\
0&\text{otherwise.}
\end{cases}$$
Thus, if $G(b'')$ actually occurs in the expansion of
$Fr_\ell(G(b))$ on the canonical basis of ${}_R\mathbf f^*$,
then either $\varphi_i(b'')\geq k+1$, or $\varphi_i(b)=k\ell_i$
and $\varphi_i(b'')=k$; in any case,
$\varphi_i(b'')\geq\varphi_i(b)/\ell_i$. Moreover,
when $\varphi_i(b'')=\varphi_i(b)/\ell_i$,
$$\bigl\langle G(b'')^\vee,Fr_\ell(G(b))\bigr\rangle=
\bigl\langle G(b'')^\vee,\theta_i^{(k)}Fr_\ell\bigl(G\bigl(\tilde
f_i^{\max}b\bigr)\bigr)\bigr\rangle=\bigl\langle G\bigl(\tilde
f_i^{\max}b''\bigr)^\vee,Fr_\ell\bigl(G\bigl(\tilde
f_i^{\max}b\bigr)\bigr)\bigr\rangle.$$

Now let $b'\in B(-\infty)^*$. Applying the previous reasoning to
$b=S_\ell(b')$ and using that $\sigma$ commutes with $Fr_\ell$ and
with $S_\ell$, we eventually obtain:
\begin{itemize}
\item
If $\bigl\langle G(b'')^\vee,Fr_\ell(G(S_\ell(b')))\bigr\rangle\neq0$,
then $\varphi_i(b'')\geq\varphi_i(b')$ for each $i\in I$.
\item
If $(c',c'')\in(B(-\infty)^*)^2$ is such that $(b',b'')\approx(c',c'')$,
then
$$\bigl\langle G(b'')^\vee,Fr_\ell(G(S_\ell(b')))\bigr\rangle=
\bigl\langle G(c'')^\vee,Fr_\ell(G(S_\ell(c')))\bigr\rangle.$$
\end{itemize}
This proves~\ref{it:FCSa}.

The proof of~\ref{it:FCSb} is completely analogous.
\end{proof}

Suppose now that the Cartan matrix $A$ is of finite type and adopt
the notation of Section~\ref{ss:LusztigData}. Observing that the
Weyl group for the Cartan matrices $A$ and $A^*$ are the same, we
can use the same $\mathbf i\in\mathscr X$ to construct a PBW basis
in $\mathbf f$ and in $\mathbf f^*$. Abusing slightly the notation,
we denote the elements in these bases by
$E_{\mathbf i}^{(\mathbf n)}$ and ${}^*E_{\mathbf i}^{(\mathbf n)}$,
for $\mathbf n\in\mathbb N^N$ --- the abuse is that in
Section~\ref{ss:LusztigData}, $E_{\mathbf i}^{(\mathbf n)}$ was
a basis of $U_q(\mathfrak n_+)$, which we now transport to $\mathbf f$.
By Section~7.1 in~\cite{Lusztig92}, these bases are in fact bases of
${}_{\mathcal A}\mathbf f$ and ${}_{\mathcal A}\mathbf f^*$, so they
can be specialized to ${}_R\mathbf f$ and ${}_R\mathbf f^*$. Lastly,
we define the map $S_{\mathbf i,\ell}:(n_1,\ldots,n_N)\mapsto
(\ell_{i_1}n_1,\ldots,\ell_{i_N}n_N)$ from $\mathbb N^N$ to
itself, and we denote by $\leq_{\mathbf i}$ and $\leq_{\mathbf i}^*$
the orders on $\mathbb N^N$ relative to the Cartan matrices $A$
and $A^*$, respectively.

\begin{lemma}
\label{le:FrobPBW}
Let $\ell\geq1$ and let $\mathbf i\in\mathscr X$.
\begin{enumerate}
\item
\label{it:FPBWa}
The diagram
$$\xymatrix@C=40pt{B(-\infty)^*\ar[r]^{S_\ell}
\ar[d]_{n_{\mathbf i}}&B(-\infty)\ar[d]^{n_{\mathbf i}}\\
\mathbb N^N\ar[r]^{S_{\mathbf i,\ell}}&\mathbb N^N}$$
commutes.
\item
\label{it:FPBWb}
Let $\mathbf m$ and $\mathbf n$ in $\mathbb N^N$. Then
$\mathbf n\leq_{\mathbf i}^*\mathbf m$ if and only if
$S_{\mathbf i,\ell}(\mathbf n)\leq_{\mathbf i}
S_{\mathbf i,\ell}(\mathbf m)$.
\item
\label{it:FPBWc}
Let $\mathbf n\in\mathbb N^N$. If $\mathbf n$
does not belong to the image of $S_{\mathbf i,\ell}$,
then $Fr_\ell\bigl(E_{\mathbf i}^{(\mathbf n)}\bigr)=0$.
Otherwise, $Fr_\ell\bigl(E_{\mathbf i}^{(\mathbf n)}\bigr)=
{}^*E_{\mathbf i}^{(\mathbf m)}$, where
$\mathbf m=S_{\mathbf i,\ell}^{-1}(\mathbf n)$.
\item
\label{it:FPBWd}
Let $\mathbf m$ and $\mathbf n$ in $\mathbb N^N$. If
$E_{\mathbf i}^{(\mathbf m)}$ actually occurs in the expansion of
$Fr'_\ell\bigl({}^*E_{\mathbf i}^{(\mathbf n)}\bigr)$ on the PBW
basis of ${}_R\mathbf f$, then $\mathbf m\geq_{\mathbf i}S_{\mathbf
i,\ell}(\mathbf n)$.
\end{enumerate}
\end{lemma}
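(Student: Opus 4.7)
I will handle the four parts in the order (ii), (iii), (iv), (i), each building on the previous. Part~(ii) is a direct arithmetic exercise: from $\alpha_i^*=\ell_i\alpha_i$ and $(\alpha_i^*)^\vee=\alpha_i^\vee/\ell_i$ the two Weyl groups coincide, and an easy induction on $t$ yields $\beta_t^*=\ell_{i_t}\beta_t$ and $\gamma_k^*=\gamma_k/\ell_{i_k}$. Substituting these relations into the defining inequality of $\leq_{\mathbf i}^*$ and multiplying through by $\ell_{i_k}$ converts it into the defining inequality of $\leq_{\mathbf i}$ applied to $S_{\mathbf i,\ell}(\mathbf n)$ and $S_{\mathbf i,\ell}(\mathbf m)$; the weight equality $|\mathbf n|^*=|\mathbf m|^*$ similarly becomes $|S_{\mathbf i,\ell}(\mathbf n)|=|S_{\mathbf i,\ell}(\mathbf m)|$.

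For (iii), the key input is the braid intertwining $Fr_\ell\circ T_i=T_i^*\circ Fr_\ell$ (a standard fact for the quantum Frobenius, cf.\ Chapter~35 of Lusztig's book), together with the defining action $Fr_\ell(\theta_i^{(n)})={}^*\theta_i^{(n/\ell_i)}$, interpreted as zero if $\ell_i\nmid n$. Applying $Fr_\ell$ termwise to the defining product $E_{\mathbf i}^{(\mathbf n)}=E_{i_1}^{(n_1)}\,T_{i_1}(E_{i_2}^{(n_2)})\,\cdots\,(T_{i_1}\cdots T_{i_{N-1}})(E_{i_N}^{(n_N)})$ immediately yields the claim: the product vanishes unless each $n_k$ is a multiple of $\ell_{i_k}$, i.e.\ $\mathbf n\in\im S_{\mathbf i,\ell}$, in which case one reads off $E_{\mathbf i}^{(\mathbf n)}={}^*E_{\mathbf i}^{(S_{\mathbf i,\ell}^{-1}(\mathbf n))}$.

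For (iv), I would proceed by induction on the number of nonzero entries of $\mathbf n$, using the multiplicative structure of PBW bases and the convexity Lemma~\ref{le:ConvPBW}. If $\mathbf n$ has a single nonzero entry, the analogous braid intertwiner $Fr'_\ell\circ T_i^*=T_i\circ Fr'_\ell$ combined with the defining action $Fr'_\ell({}^*\theta_i^{(n)})=\theta_i^{(n\ell_i)}$ gives the exact equality $Fr'_\ell({}^*E_{\mathbf i}^{(\mathbf n)})=E_{\mathbf i}^{(S_{\mathbf i,\ell}(\mathbf n))}$. For general $\mathbf n$, write $\mathbf n=\mathbf n_1+\mathbf n_2$ with supports in the first $k$ and last $N-k$ positions; then $Fr'_\ell({}^*E_{\mathbf i}^{(\mathbf n)})$ factors as $Fr'_\ell({}^*E_{\mathbf i}^{(\mathbf n_1)})\,Fr'_\ell({}^*E_{\mathbf i}^{(\mathbf n_2)})$, and the inductive hypothesis together with Lemma~\ref{le:ConvPBW} bounds every PBW index appearing in the product from below by $S_{\mathbf i,\ell}(\mathbf n)$ in $\leq_{\mathbf i}$.

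Finally, (i) follows by combining the previous parts with Corollary~\ref{co:TriangPBWCan} and Proposition~\ref{pr:FrobCanStr}~\ref{it:FCSa}. Expanding $G(S_\ell(b))$ on the PBW basis of ${}_R\mathbf f$ with leading term $E_{\mathbf i}^{(n_{\mathbf i}(S_\ell(b)))}$, applying $Fr_\ell$ via (iii), and using (ii) to translate orders yields a $\leq_{\mathbf i}^*$-triangular expansion of $Fr_\ell(G(S_\ell(b)))$ on the starred PBW basis with leading index $S_{\mathbf i,\ell}^{-1}(n_{\mathbf i}(S_\ell(b)))$, provided this preimage exists. Comparing with the expansion $Fr_\ell(G(S_\ell(b)))=G(b)+\sum_{b''>_{\str}b}c_{b''}G(b'')$ given by Proposition~\ref{pr:FrobCanStr}~\ref{it:FCSa}, whose starred PBW expansion has leading term ${}^*E_{\mathbf i}^{(n_{\mathbf i}(b))}$, identifies the two leading terms and yields $S_{\mathbf i,\ell}(n_{\mathbf i}(b))=n_{\mathbf i}(S_\ell(b))$. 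The main technical obstacle will be ruling out that a summand $G(b'')$ with $b''>_{\str}b$ contributes a strictly smaller PBW index than $n_{\mathbf i}(b)$; this should follow either from a compatibility of $\leq_{\str}$ with $\leq_{\mathbf i}$ on Lusztig data, or from a direct component-by-component computation of $n_{\mathbf i}(S_\ell(b))$ using Kashiwara's recursive description of Lusztig data and the scaling properties of $S_\ell$ on $\varphi_i$ and on the crystal operators.
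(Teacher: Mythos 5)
Parts (ii) and (iii) of your proposal are correct and are what the paper does: (ii) is the scaling computation $\beta_t^*=\ell_{i_t}\beta_t$, $\gamma_k^*=\gamma_k/\ell_{i_k}$, and (iii) rests on the compatibility of $Fr_\ell$ with the symmetries $T'_{i,-1}$, which is exactly the fact the paper invokes. Your inductive scheme for (iv) (split $\mathbf n=\mathbf n_1+\mathbf n_2$, use that $Fr'_\ell$ is an algebra morphism together with Lemma~\ref{le:ConvPBW}) is also the paper's. But your base case for (iv) leans on an ``analogous braid intertwiner'' $Fr'_\ell\circ T_i^*=T_i\circ Fr'_\ell$ yielding the exact equality $Fr'_\ell\bigl({}^*E_{\mathbf i}^{(\mathbf n)}\bigr)=E_{\mathbf i}^{(S_{\mathbf i,\ell}(\mathbf n))}$. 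No such statement is available: Lusztig's braid compatibility concerns $Fr_\ell$ only, $Fr'_\ell$ is defined only on the plus part while $T_i$ does not preserve it, and the paper carefully avoids any claim of this kind. It is also unnecessary: if $\mathbf n$ has a single nonzero entry, so does $S_{\mathbf i,\ell}(\mathbf n)$, which is therefore $\leq_{\mathbf i}$-minimal in its weight (because $\langle\gamma_k,\beta_t\rangle\geq0$ for $t\leq k$); since $Fr'_\ell$ preserves weights, every PBW index occurring in $Fr'_\ell\bigl({}^*E_{\mathbf i}^{(\mathbf n)}\bigr)$ is automatically $\geq_{\mathbf i}S_{\mathbf i,\ell}(\mathbf n)$. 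That is how the paper disposes of the base case.

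The genuine gap is in (i). Your plan --- expand $G(S_\ell(b'))$ on the PBW basis via Corollary~\ref{co:TriangPBWCan}, push through $Fr_\ell$ using (iii), and compare with the expansion $Fr_\ell(G(S_\ell(b')))=G(b')+\sum_{b''>_{\str}b'}c_{b''}G(b'')$ from Proposition~\ref{pr:FrobCanStr} --- breaks exactly at the step you label a ``technical obstacle''. To identify leading indices you must (a) know that the leading PBW term of $G(S_\ell(b'))$ is not annihilated by $Fr_\ell$, i.e.\ that $\mathbf n_{\mathbf i}(S_\ell(b'))\in\im S_{\mathbf i,\ell}$, which is part of what is being proved, and (b) rule out that some $G(b'')$ with $b''>_{\str}b'$ contributes to the coefficient of ${}^*E_{\mathbf i}^{(\mathbf n_{\mathbf i}(b'))}$, possibly cancelling the $1$ coming from $G(b')$. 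The compatibility between $\leq_{\str}$ and $\leq_{\mathbf i}$ on Lusztig data that you invoke for (b) is established nowhere in the paper (the two orders are only loosely related, cf.\ Remark~\ref{rk:StabPolOrder}), and Proposition~\ref{pr:FrobCanPol}, which would give such a statement, is itself deduced from the present lemma, so using it would be circular. The paper proves (i) by a different, unconditional route: one checks on the explicit piecewise-linear formulas that the rescaling maps commute with the transition bijections between Lusztig data in two directions, $R_{\mathbf i}^{\mathbf j}\circ S_{\mathbf i,\ell}=S_{\mathbf j,\ell}\circ{}^*R_{\mathbf i}^{\mathbf j}$, and then argues by induction on the weight as in Berenstein--Zelevinsky, using $\varphi_i(S_\ell(b))=\ell_i\varphi_i(b)$ and $S_\ell\bigl(\tilde f_ib\bigr)=\tilde f_i^{\ell_i}S_\ell(b)$; this is essentially the ``direct component-by-component computation'' you mention as a fallback but do not carry out. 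As written, part (i) is not proved.
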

\begin{proof}
Given $(\mathbf i,\mathbf j)\in\mathscr X^2$, let
$R_{\mathbf i}^{\mathbf j}$ be the composition
$\mathbb N^N\xrightarrow{\mathbf b_{\mathbf i}}B(-\infty)
\xrightarrow{\mathbf n_{\mathbf j}}\mathbb N^N$ and let
${}^*R_{\mathbf i}^{\mathbf j}$ be the composition
$\mathbb N^N\xrightarrow{\mathbf b_{\mathbf i}}B(-\infty)^*
\xrightarrow{\mathbf n_{\mathbf j}}\mathbb N^N$.
Using the explicit formulas for these piecewise linear bijections
$R_{\mathbf i,\mathbf j}$ and ${}^*R_{\mathbf i,\mathbf j}$ (see
Section~12.6 in \cite{Lusztig92} and Theorem~5.2 and Proposition~7.1
in~\cite{BerensteinZelevinsky01}), one checks that the diagram
$$\xymatrix@C=60pt{\mathbb N^N\ar[r]^{S_{\mathbf i,\ell}}
\ar[d]_{{}^*R_{\mathbf i}^{\mathbf j}}&\mathbb N^N
\ar[d]^{R_{\mathbf i}^{\mathbf j}}\\
\mathbb N^N\ar[r]^{S_{\mathbf j,\ell}}&\mathbb N^N}$$
commutes. From there, one shows assertion~\ref{it:FPBWa} by
induction on the weight, using the same arguments as those used
in \cite{BerensteinZelevinsky01}, proof of Theorem~5.7 (see in
particular p.~112, l.~5--12).

Assertion~\ref{it:FPBWb} follows from the definitions by a
straightforward computation.

Assertion~\ref{it:FPBWc} comes from the fact that the quantum
Frobenius morphism $Fr_\ell$ is compatible with Lusztig
symmetries $T'_{i,-1}$ (\cite{Lusztig93}, Section~41.1.9), which
are the main ingredient in the construction of the PBW bases.

To prove~\ref{it:FPBWd}, one begins with the particular case
where all entries of $\mathbf n$ but one vanish. Certainly
then $S_\ell(\mathbf n)$ has the same property, so every
$\mathbf m\in\mathbb N^N$ such that
$|\mathbf m|=|S_{\mathbf i,\ell}(\mathbf n)|$ satisfies
$\mathbf m\geq_{\mathbf i}S_{\mathbf i,\ell}(\mathbf n)$.
This obvioulsy implies the desired property. The general case
then follows by induction on the number of nonzero entries in
$\mathbf n$, using Lemma~\ref{le:ConvPBW} and the fact that
$Fr'_\ell$ is a morphism of algebras.
\end{proof}

The next proposition states that the compatibility condition
\eqref{eq:CompFrobCan} can be restored by filtering $\mathbf f$
and $\mathbf f^*$ with the help of $\leq_{\pol}$. It thus tells
us that $S_\ell$ is the crystal version of $Fr'_\ell$.

\begin{proposition}
\label{pr:FrobCanPol}
Let $\ell\geq1$ and let $(b',b'')\in B(-\infty)\times B(-\infty)^*$.
\begin{enumerate}
\item
\label{it:FCPa}
If $G(b'')$ actually occurs in the expansion of $Fr_\ell(G(b'))$
on the canonical basis of ${}_R\mathbf f^*$, then
$b'\leq_{\pol}S_\ell(b'')$.
\item
\label{it:FCPb}
If $G(b')$ actually occurs in the expansion of $Fr'_\ell(G(b''))$
on the canonical basis of ${}_R\mathbf f$, then $S(b'')\leq_{\pol}b'$.
\end{enumerate}
\end{proposition}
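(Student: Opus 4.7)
The plan is to reduce both statements to inequalities on Lusztig data via Proposition~\ref{pr:PolOrderPBWOrder}, and then bookkeep three triangular transition matrices: the PBW-to-canonical transition in $\mathbf f$ (Corollary~\ref{co:TriangPBWCan}), the analogous transition in $\mathbf f^*$, and the action of the Frobenius maps on PBW bases described in Lemma~\ref{le:FrobPBW}. Concretely, fix $\mathbf i\in\mathscr X$; by Proposition~\ref{pr:PolOrderPBWOrder} it suffices to compare the Lusztig data in direction $\mathbf i$. Write $\mathbf n'=\mathbf n_{\mathbf i}(b')$, $\mathbf n''=\mathbf n_{\mathbf i}(b'')$, and observe that Lemma~\ref{le:FrobPBW}~\ref{it:FPBWa} gives $\mathbf n_{\mathbf i}(S_\ell(b''))=S_{\mathbf i,\ell}(\mathbf n'')$, while part~\ref{it:FPBWb} of the same lemma translates the order $\leq_{\mathbf i}^*$ on $\mathbb N^N$ into $\leq_{\mathbf i}$ after applying $S_{\mathbf i,\ell}$.

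For~\ref{it:FCPa}, expand $G(b')$ on the PBW basis: by Corollary~\ref{co:TriangPBWCan} only monomials $E_{\mathbf i}^{(\mathbf m)}$ with $\mathbf m\geq_{\mathbf i}\mathbf n'$ occur. Apply $Fr_\ell$: by Lemma~\ref{le:FrobPBW}~\ref{it:FPBWc} the surviving terms are indexed by $\mathbf m=S_{\mathbf i,\ell}(\mathbf p)$ and produce ${}^*E_{\mathbf i}^{(\mathbf p)}$. Finally expand each ${}^*E_{\mathbf i}^{(\mathbf p)}$ on the canonical basis of ${}_R\mathbf f^*$; inverting the starred analogue of Corollary~\ref{co:TriangPBWCan}, only $G(b'')$ with $\mathbf n_{\mathbf i}(b'')\geq_{\mathbf i}^*\mathbf p$ can appear. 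Chaining the three inequalities
\[
\mathbf n'\leq_{\mathbf i}S_{\mathbf i,\ell}(\mathbf p)\leq_{\mathbf i}S_{\mathbf i,\ell}(\mathbf n_{\mathbf i}(b''))=\mathbf n_{\mathbf i}(S_\ell(b''))
\]
(the middle step by Lemma~\ref{le:FrobPBW}~\ref{it:FPBWb}) gives $\mathbf n_{\mathbf i}(b')\leq_{\mathbf i}\mathbf n_{\mathbf i}(S_\ell(b''))$, and since $\mathbf i$ was arbitrary, Proposition~\ref{pr:PolOrderPBWOrder} yields $b'\leq_{\pol}S_\ell(b'')$.

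Part~\ref{it:FCPb} proceeds along the same lines, with one pair of roles exchanged. Expand $G(b'')$ on the starred PBW basis: terms ${}^*E_{\mathbf i}^{(\mathbf n)}$ appearing satisfy $\mathbf n\geq_{\mathbf i}^*\mathbf n_{\mathbf i}(b'')$. Apply $Fr'_\ell$; now the key input is Lemma~\ref{le:FrobPBW}~\ref{it:FPBWd}, which states that in the PBW expansion of $Fr'_\ell({}^*E_{\mathbf i}^{(\mathbf n)})$ on the PBW basis of ${}_R\mathbf f$, only monomials $E_{\mathbf i}^{(\mathbf m)}$ with $\mathbf m\geq_{\mathbf i}S_{\mathbf i,\ell}(\mathbf n)$ can occur. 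Finally invert the PBW-to-canonical triangularity in ${}_R\mathbf f$ to conclude that a contributing $G(b')$ must satisfy $\mathbf n_{\mathbf i}(b')\geq_{\mathbf i}\mathbf m$. Concatenating, $\mathbf n_{\mathbf i}(b')\geq_{\mathbf i}S_{\mathbf i,\ell}(\mathbf n_{\mathbf i}(b''))=\mathbf n_{\mathbf i}(S_\ell(b''))$, and Proposition~\ref{pr:PolOrderPBWOrder} again closes the argument.

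There is no real obstacle here; the proof is a clean assembly of tools already proven. The one point that requires genuine content (rather than formal bookkeeping) is part~\ref{it:FPBWd} of Lemma~\ref{le:FrobPBW}: the assertion that $Fr'_\ell$ preserves the PBW filtration on the nose in the leading direction. If one were to present this proposition as self-contained, one would want to emphasize that this lemma rests on a convexity/triangularity argument à la Lemma~\ref{le:ConvPBW} together with the base case of a single divided power, which in turn reduces to the defining identity $Fr'_\ell(\theta_i^{(n)})=\theta_i^{(n\ell_i)}$.
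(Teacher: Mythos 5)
Your argument is correct and is exactly the paper's proof: the paper disposes of this proposition by saying it "follows from Proposition~\ref{pr:PolOrderPBWOrder}, Corollary~\ref{co:TriangPBWCan}, and Lemma~\ref{le:FrobPBW} by routine arguments," and your write-up simply makes those routine triangularity/chaining steps explicit (including the correct use of part~\ref{it:FPBWd} of Lemma~\ref{le:FrobPBW} for the splitting). No gap; the only point you might add is that the unitriangularity of the PBW-to-canonical transition and of its inverse persists after specialization to ${}_R\mathbf f$ and ${}_R\mathbf f^*$ because these matrices have entries in $\mathcal A$.
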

\begin{proof}
This follows from Proposition~\ref{pr:PolOrderPBWOrder},
Corollary~\ref{co:TriangPBWCan}, and Lemma~\ref{le:FrobPBW}
by routine arguments.
\end{proof}

\begin{other}{Remark}
\label{rk:FrobHall}
In the Hall algebra model for quantum groups, the natural basis of
the Hall algebra corresponds to a PBW basis (see~\cite{Ringel96} for
a survey). The compatibility of the quantum Frobenius morphism with
the PBW bases (Lemma~\ref{le:FrobPBW}~\ref{it:FPBWc}) then leads to
an interpretation of $Fr_\ell$ within the framework of Hall
algebras, which can be used as an alternate definition of $Fr_\ell$
\cite{McGerty10}. Conversely, one can adopt this Hall algebra approach
to show the compatibility of $Fr_\ell$ with the automorphisms
$T'_{i,-1}$, using Theorem~6 in~\cite{Ringel96} or Theorem~13.1
in~\cite{SevenhantVandenBergh99}.
\end{other}

\subsection{Counterexamples in type $A_5$ and $D_4$}
\label{ss:FrobCexA5D4}
As mentioned at the beginning of Section~\ref{ss:CompUptoFilt},
counterexamples to \eqref{eq:CompFrobCan} do exist in type $A_5$
and~$D_4$.

Let us take $d_i=1$ for each $i$, whence $\ell_i=\ell$ and $A^*=A$,
and let us adopt the notation of Section~\ref{ss:StatRes}. Since
\begin{xalignat*}2
Fr_\ell(\xi_p)&=
\begin{cases}
\xi_{p/\ell}&\text{if $\ell$ divides $p$,}\\
0&\text{otherwise,}
\end{cases}
&Fr_\ell(\eta_p)&=
\begin{cases}
\eta_{p/\ell}&\text{if $\ell$ divides $p$,}\\
0&\text{otherwise,}
\end{cases}\\[8pt]
Fr'_\ell(\xi_p)&=\xi_{\ell p},
&Fr'_\ell(\eta_p)&=\eta_{\ell p},
\end{xalignat*}
Proposition~\ref{pr:SmallA5D4} and
Theorem~\ref{th:MainA5D4}~\ref{it:MADb} lead to
\begin{align*}
&Fr_\ell(G(b_{p,0}))=
\begin{cases}
G(b_{p/\ell,0})&\text{if $\ell$ divides $p$,}\\
0&\text{otherwise,}
\end{cases}\\[8pt]
&Fr_2(G(b_{0,1}))+G(b_{1,0})=0,\\[4pt]
&Fr_2(G(b_{0,2}))+Fr_2(G(b_{2,1}))=G(b_{0,1}),\\[4pt]
&Fr_4(G(b_{0,2}))+Fr_4(G(b_{2,1}))+G(b_{1,0})=0,
\end{align*}
and to
\begin{align*}
&Fr'_\ell(G(b_{p,0}))=G(b_{\ell p,0}),\\[4pt]
&Fr'_\ell(G(b_{0,1}))=G(b_{0,\ell})+G(b_{2,\ell-1})
+G(b_{4,\ell-2})+\cdots+G(b_{2\ell-2,1}).
\end{align*}

In addition, in the case (III), calculations made by a computer
running GAP and its package QuaGroup \cite{GAP4,DeGraaf07} lead to
further relations in ${}_{\mathcal A}\mathbf f$. Since all $d_i=1$,
we may drop the subscript~$i$ in the notation for the
Gaussian numbers. Let us introduce a linear operator
$R_{i,\ell}:{}_{\mathcal A}\mathbf f\to{}_{\mathcal A}\mathbf f$~by
$$R_{i,\ell}(x)=\theta_i^{(\ell-1)}x\theta_i-[\ell-2]\,
\theta_i^{(\ell)}x,$$
where $i\in\{1,3,4\}$ and $\ell\in\mathbb N$. Then
\begin{align*}
&\theta_2^{(2)}\Bigl(R_{1,3}\circ R_{3,3}\circ R_{4,3}
\bigl(\theta_2^{(2)}\bigr)\Bigr)\,
\theta_2^{(2)}=G(b_{1,1})+[2]^2\,G(b_{3,0}),\\[4pt]
&\theta_2^{(3)}\Bigl(R_{1,4}\circ R_{3,4}\circ R_{4,4}
\bigl(\theta_2^{(2)}\bigr)\Bigr)\,
\theta_2^{(3)}=G(b_{2,1})+[3]^2\,G(b_{4,0}).
\end{align*}

Using the congruences $[3]^2\equiv1$ modulo $\Phi_4$ or $\Phi_8$
and $[2]^2\equiv1\pmod{\Phi_6}$, we deduce
\begin{align*}
&Fr_2(G(b_{2,1}))+G(b_{2,0})=0,\\[4pt]
&Fr_3(G(b_{1,1}))+G(b_{1,0})=0,\\[4pt]
&Fr_4(G(b_{2,1}))+G(b_{1,0})=0,
\end{align*}
whence
$$Fr_2(G(b_{0,2}))=G(b_{0,1})+G(b_{2,0}).$$

Pierre Baumann\\[4pt]
Institut de Recherche Math\'ematique Avanc\'ee,
Universit\'e de Strasbourg et CNRS,
7 rue Ren\'e Descartes,
67084 Strasbourg Cedex,
France\\[4pt]
\href{mailto:p.baumann@unistra.fr}{p.baumann@unistra.fr}
\medskip

The author acknowledges the support of the ANR,
project~ANR-09-JCJC-0102-01.
\end{document}